\documentclass[11pt,letterpaper,oneside]{amsart}
\usepackage[body={6in,9in}]{geometry}
\usepackage{amssymb,amsmath}
\usepackage{hyperref}
\hypersetup{
	pdfauthor   = {N. Bruin, E.V. Flynn, A. Shnidman},
	pdftitle    = {Genus two curves with full sqrt(3)-level structure},
	pdfsubject  = {},
	pdfkeywords = {},
	backref=true, pagebackref=true, hyperindex=true, colorlinks=true,
	breaklinks=true, urlcolor=blue, linkcolor=blue, citecolor=blue,
	bookmarks=true, bookmarksopen=true}
\usepackage[alphabetic,backrefs,lite]{amsrefs}
\usepackage{latexsym}
\usepackage{color}
\usepackage{graphicx}
\usepackage{fancyhdr}
\usepackage[OT2,OT1]{fontenc}

\DeclareFontFamily{U}{wncy}{}
\DeclareFontShape{U}{wncy}{m}{n}{<->wncyr10}{}
\DeclareSymbolFont{mcy}{U}{wncy}{m}{n}
\DeclareMathSymbol{\Sha}{\mathord}{mcy}{"58}

\DeclareMathOperator{\disc}{disc}

\DeclareMathOperator{\PGL}{PGL}

\DeclareMathOperator{\PSp}{PSp}

\DeclareMathOperator{\End}{End}
\DeclareMathOperator{\Spec}{Spec}
\DeclareMathOperator{\Sel}{Sel}
\DeclareMathOperator{\cH}{H}

\newcommand{\sep}{{\operatorname{sep}}}
\newcommand{\minus}{\scalebox{0.70}[1.0]{$-$}}
\newcommand{\Z}{{\mathbb Z}}
\renewcommand{\AA}{\mathbb{A}}
\newcommand{\ZZ}{{\mathbb Z}}
\newcommand{\Q}{{\mathbb Q}}
\newcommand{\QQ}{{\mathbb Q}}
\newcommand{\FF}{{\mathbb F}}

\newcommand{\RR}{{\mathbb R}}
\newcommand{\PP}{{\mathbb P}}
\newcommand{\sB}{\mathcal{B}}
\newcommand{\sC}{\mathcal{C}}
\newcommand{\sX}{\mathcal{X}}
\newcommand{\sW}{\mathcal{W}}
\newcommand{\sZ}{\mathcal{Z}}

\newcommand{\sO}{\mathcal{O}}

\newcommand{\sH}{\mathcal{H}}
\newcommand{\sJ}{\mathcal{J}}

\newcommand{\C}{C}
\newcommand{\Cmin}{\mathcal{C}^\mathrm{min}}
\newcommand{\Cnaive}{\mathcal{C}^\mathrm{naive}}

\newcommand{\F}{\mathcal{F}}

\newcommand{\sA}{\mathcal{A}}
\newcommand{\sR}{\mathcal{R}}

\newcommand{\Jac}{{\mathrm{Jac}}}
\newcommand{\alg}{\mathrm{alg}}
\newcommand{\JacC}{{\hbox{Jac}_{\lower.5pt\hbox{$_\sC$}}}}
\newcommand{\JacF}{{\hbox{Jac}_{\lower.5pt\hbox{$_\F$}}}}

\newcommand{\ord}{\mathrm{ord}}

\newcommand{\Gal}{\mathrm{Gal}}
\newcommand{\Res}{\mathrm{Res}}

\newcommand{\kred}{\overline{k}}
\newcommand{\fp}{\mathfrak{p}}
\newcommand{\fs}{\mathfrak{s}}
\DeclareMathOperator{\Aut}{Aut}

\newtheorem{thm}{Theorem}[section]
\newtheorem{conj}[thm]{Conjecture}

\newtheorem{lemma}[thm]{Lemma}
\newtheorem{prop}[thm]{Proposition}
\newtheorem{cor}[thm]{Corollary}
\theoremstyle{definition}
\newtheorem{example}[thm]{Example}
\newtheorem{rmk}[thm]{Remark}
\newtheorem{notation}[thm]{Notation}
 
\theoremstyle{definition}
\newtheorem{defn}[thm]{Definition}
\theoremstyle{remark}

\title{Genus two curves with full $\sqrt{3}$-level structure and Tate-Shafarevich groups}

\author{N.\ Bruin}
\address{Department of Mathematics, Simon Fraser University,
Burnaby, BC, CANADA, V5A 1S6}
\email{nbruin@sfu.ca}
\author{E.V.\ Flynn}
\address{Mathematical Institute, University of Oxford, 24--29 St.\ Giles,
Oxford OX1 3LB, United Kingdom}
\email{flynn@maths.ox.ac.uk}
\author{A.\ Shnidman}
\address{Einstein Institute of Mathematics, Hebrew University of Jerusalem, Israel} 
\email{ariel.shnidman@huji.mail.ac.il}

\subjclass{Primary 11G30; Secondary 11G10, 14H40}
\keywords{Higher Genus Curves, Jacobians,
Tate-Shafarevich Group}
\thanks{The first author acknowledges the support of the Natural Sciences and Engineering Research Council of
	Canada (NSERC), funding reference number RGPIN-2018-04191.  The
third author was supported by the Israel Science Foundation (grant No. 2301/20)}
\date{1 February, 2021}

\linespread{1}

\begin{document}

\begin{abstract}
We give an explicit rational parameterization of the surface $\mathcal{H}_3$ over $\Q$ whose points parameterize genus 2 curves~$\C$
with full $\sqrt{3}$-level structure on their Jacobian $J$.  
We use this model to construct abelian surfaces $A$ with the property that $\Sha(A_d)[3] \neq 0$ for a positive proportion of quadratic twists $A_d$.  
In fact, for $100\%$ of $x \in \mathcal{H}_3(\Q)$, this holds for the surface $A = \Jac(C_x)/\langle P \rangle$, where $P$ is the marked point of order $3$.  Our methods also give an explicit bound on the average rank of $J_d(\Q)$, as well as statistical results on the size of $\#C_d(\Q)$, as $d$ varies through squarefree integers.  
\end{abstract}

\maketitle


\section{Introduction}
We study the arithmetic of an interesting family of genus two curves and their Jacobians.  Combining explicit methods with techniques from geometry of numbers, we give bounds on the average number of rational points in the quadratic twist families of these curves and abelian surfaces, as well as lower bounds on the  size of related Tate-Shafarevich groups.
\subsection{Results}
Our first result is an explicit rational parameterization of the moduli space $\mathcal{H}_3$ of genus two curves $C$, equipped with $\sqrt{3}$-multiplication $\iota \colon \Z[\sqrt{3}] \hookrightarrow \End(J)$ on its Jacobian $J = \Jac(C)$ and a full $\sqrt{3}$-level structure $\epsilon \colon \Z/3\Z \times \mu_3\simeq J[\sqrt 3]$.
Our parameterization uses the polynomials $G_1,G_2,H_1,H_2,\lambda_1,\lambda_2 \in \Z[a,b,c][x]$ given in Section \ref{sec:multsqrt3}; see also \cite{BFScode}. The $G_i(x)$ are cubic, $H_i(x)$ are quadratic, and $\lambda_i \in \Z[a,b,c]$ are constants.      

Let $k$ be a field and assume $\mathrm{char} \, k \neq 2,3$.  For non-zero $(a, b, c) \in k^3$, define the  hyperelliptic curve $C_{a,b,c}$ over $k$ with affine model 
\[y^2 = G_1(x)^2 + \lambda_1H_1(x)^3,\]
obtained by specializing $G_1, \lambda_1,$ and $H_1$ at $(a, b,c)$.  The isomorphism class of $C_{a,b,c}$ depends only on the class $(a \colon b \colon c) \in \PP^2(k)$.  
The ratio  
\[(\minus 3) \cdot \dfrac{G_1^2 + \lambda_1H_1^3}{G_2^2 + \lambda_2H_2^3}\]
is a square in the field of fractions of $\Z[a,b,c][x]$, hence $C_{a,b,c}$ is isomorphic to 
\[\minus3y^2 = G_2(x)^2 + \lambda_2H_2(x)^3.\] 

Outside of the vanishing locus $\Delta$ of the discriminant of the defining sextic (see \eqref{E:discfactors} and \eqref{E:fdisc}), the curve $C = C_{a,b,c}$ is smooth and has genus 2.  Its Jacobian $J = J_{a,b,c}$ is the abelian surface over $k$ parameterizing equivalence classes of degree-zero divisors on $C$.  There are two rational subgroups of order 3 on $J$, defined as follows.  Let $\kappa$ be a canonical divisor on $C$, having degree 2.  Let $x_1$ and $x_1'$ be the roots of $H_1(x)$ in an algebraic closure $k^\alg$ of $k$, and set $y_1=G_1(x_1)$ and $y_1' = G_1(x_1')$. Then the divisor $D_1 = (x_1,y_1) + (x_1',y_1') - \kappa$ has order 3 and represents a point in $J(k)$. It follows that the cyclic subgroup $\langle D_1\rangle$ corresponds to a subgroup scheme $\ZZ/3\ZZ\subset J$ over $k$. Similarly, let $x_2$ and $x_2'$ be the roots of $H_2(x)$, and set 
\[y_2 = \frac{1}{\sqrt{\minus3}}G_2(x_2) \mbox{  and } y_2'= \frac{1}{\sqrt{\minus3}}G_2(x_2').\]  
Then $D_2 = (x_2,y_2) + (x_2',y_2') - \kappa$ has order 3 in $J(k(\sqrt{\minus3}))$, and $\langle D_2\rangle$ corresponds to a subgroup scheme $\mu_3\subset J$ over $k$. We establish the following result.

\begin{thm}\label{model}
The assignment $(a:b:c)\mapsto J_{a,b,c}$ realizes $\PP^2\setminus \Delta$ as an open subset of the moduli space of principally polarized abelian surfaces with real multiplication by $\sqrt{3}$ defined over $k$ and full $\sqrt{3}$-level structure $\ZZ/3\ZZ\times\mu_3\simeq J[\sqrt{3}]$.
\end{thm}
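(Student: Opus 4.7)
The plan is to verify three things: (i) each $(a:b:c) \in \PP^2(k) \setminus \Delta$ produces a principally polarized abelian surface $J_{abc}$ with $\sqrt{3}$-multiplication $\iota$ and full $\sqrt{3}$-level structure $\epsilon$; (ii) the resulting classifying morphism to $\mathcal{H}_3$ is injective on geometric points; (iii) its image is open.

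For (i), I would first verify directly from $y^2 = G_i(x)^2 + \lambda_i H_i(x)^3$ that the divisors $D_1, D_2$ are $3$-torsion, via an explicit Mumford-representation calculation. Both generators of $\langle D_1\rangle$ are $k$-rational, making it the étale group scheme $\Z/3\Z$. The generators of $\langle D_2\rangle$ are swapped by the non-trivial element of $\Gal(k(\sqrt{\minus 3})/k)$ (acting by inversion on the cyclic group of order $3$), and this is precisely the cocycle twist of $\Z/3\Z$ giving $\mu_3$. Since one subgroup is étale and the other of multiplicative type, they meet trivially, so $\langle D_1\rangle \oplus \langle D_2\rangle$ has order $9$. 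To construct $\iota(\sqrt{3})$, the decisive ingredient is the explicit isomorphism
\[C_{abc} \simeq \bigl\{\minus 3\, y^2 = G_2^2 + \lambda_2 H_2^3\bigr\}\]
supplied by the square identity stated just before the theorem: composing the quotient $J_{abc}\to J_{abc}/\langle D_1\rangle$ with the identification of the target with the Jacobian of the second model, and then with this isomorphism, yields an endomorphism $\phi\in\End(J_{abc})$ of degree $9$. A parallel argument starting from the quotient by $\langle D_2\rangle$ places $\langle D_2\rangle$ inside $\ker\phi$ as well, and checking compatibility with the principal polarization shows $\phi$ is self-dual. Hence $\phi^2 = \phi^\vee \circ \phi = [3]$; we set $\iota(\sqrt{3}) = \phi$, and then $\ker\phi = J_{abc}[\sqrt{3}] = \langle D_1\rangle\oplus\langle D_2\rangle \simeq \Z/3\Z\times\mu_3$, with $\epsilon$ the tautological identification.

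For (ii) and (iii), both $\PP^2 \setminus \Delta$ and the Hilbert modular surface $\mathcal{H}_3$ are smooth of dimension $2$, so it suffices to establish injectivity on geometric points together with nondegeneracy of the differential at a single point. Injectivity will be shown by reconstructing $(a:b:c)$ from $(J,\iota,\epsilon)$: Torelli recovers $C$, and $\epsilon$ distinguishes the two cyclic $3$-subgroups $\langle D_i\rangle$, hence the four points of $C$ supporting them, whose $x$-coordinates are the roots of $H_1$ and $H_2$. Normalizing the hyperelliptic coordinate by pinning these four marked points and pairing them with the corresponding $G_i$-values then determines $(a:b:c)$ uniquely. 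Nondegeneracy of the differential can be checked at a single explicit base point.

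The principal obstacle is the endomorphism identity $\phi^2 = [3]$: this amounts to a Mumford-coordinate computation tracing the two quotient maps through the square identity relating the two sextics, and it is well-suited for verification via the computer-algebra scripts of \cite{BFScode}. The remaining steps reduce to dimension counting and normal-form bookkeeping.
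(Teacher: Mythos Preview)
Your step (i) contains a genuine gap in the construction of the endomorphism $\phi$. The quotient $J_{abc}\to J_{abc}/\langle D_1\rangle$ has degree $3$, and its target $A_{abc}$ is \emph{not} principally polarized (the paper notes this explicitly), hence not a Jacobian; there is no ``identification of the target with the Jacobian of the second model''. The isomorphism between the two models $y^2=G_1^2+\lambda_1H_1^3$ and $-3y^2=G_2^2+\lambda_2H_2^3$ is just a rescaling of $y$ on the \emph{same} curve $C_{abc}$ and induces the identity on $J_{abc}$; it does not supply an extra degree-$3$ map. Consequently your composite has degree $3$, not $9$, and your ``parallel argument'' would produce a second, a priori unrelated, degree-$3$ isogeny rather than showing $\langle D_2\rangle\subset\ker\phi$. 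What one actually needs is the degree-$9$ quotient by the full isotropic subgroup $\langle D_1,D_2\rangle$, together with an identification $J_{abc}/\langle D_1,D_2\rangle\simeq J_{abc}$; the latter is exactly the nontrivial content, and your outline does not address it.

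The paper proceeds quite differently, in a top-down moduli-theoretic way (Sections~\ref{S:AbelianSurfacesWithZ3Z3}--\ref{sec:multsqrt3}). It starts from the known moduli interpretation of the Burkhardt quartic $\sB\sim\sA(3)$ via \cite[Prop.~2.8]{BruinNasserden2018}, descends the universal curve to $\sA(\Sigma)$ for $\Sigma=(\ZZ/3\ZZ)^2$, and then twists to the form $\Sigma=\ZZ/3\ZZ\times\mu_3$. The Atkin-Lehner-type involution $A\mapsto A/\Sigma$ on $\sA(\Sigma)$ has as fixed locus precisely the surface $\sH_3$ where the $(3,3)$-isogeny becomes an endomorphism squaring to $[3]$; this is how the $\sqrt{3}$-multiplication arises, with no direct Mumford-coordinate verification needed. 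The rationality of $\sH_3$ and the explicit $(a:b:c)$-parametrization are then obtained by elementary algebra on the defining equation, and the universal curve $C_{abc}$ is the restriction of the already-constructed universal family. Thus both the level structure and the open-immersion property are \emph{inherited} from the Burkhardt picture rather than checked by your Torelli/differential argument. If you wish to salvage a direct approach, the correct route is to exhibit the identification $J_{abc}/\langle D_1,D_2\rangle\simeq J_{abc}$ explicitly (e.g.\ via the $(3,3)$-isogeny formulas of \cite{bruinflynntesta:three}) and verify that on $\sH_3$ the isogenous curve coincides with $C_{abc}$.
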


Moduli spaces of abelian surfaces with special endomorphism rings are well-studied, particularly through their closely associated Hilbert modular surfaces. See for instance \cites{vdG, HZ} for a classification of the discriminants that lead to rational surfaces, or the work of Elkies and Kumar \cite{ElkiesKumar}, covering many non-rational Hilbert modular surfaces as well. Results with explicit moduli interpretations, in the sense that a model of the associated universal
genus $2$ curve is provided, are relatively rare.  To our knowledge the parametrization provided here is new.

As a first application of Theorem \ref{model}  we prove a result about the growth of Tate-Shafarevich groups in quadratic twist families of abelian surfaces.  If $A$ is an abelian surface over $\Q$ and $d$ is a squarefree integer, we write $A_d$ for the quadratic twist of $A$ corresponding to the quadratic character $\Gal(\Q(\sqrt{d})/\Q) \to \{\pm1\}$.  For any integer $\ell$, we let $\Sha(A_d)[\ell]$ denote the $\ell$-torsion subgroup of the Tate-Shafarevich group $\Sha(A_d)$.  

Delaunay-type conjectures \cite{delaunay} predict that for any $\ell$, there should be infinitely many $d$ such that $\Sha(A_d)[\ell] \neq 0$; in fact, the natural density of the set $\{d \in \Z \, \colon \Sha(A_d)[\ell] \neq 0\}$ should be positive \cite{BKLOS:sha}*{Conj.\ ~1.1}.
This has been verified for certain twist families of elliptic curves for $\ell = 3$ \cite{BKLOS:sha, ABS}  and $\ell = 2^k$ (\cite{Smith}).

The third author gave examples with $\dim A > 1$ (and $A$ simple) in \cite{shnidman:RM}, by studying quotients of the prime level modular Jacobian $J_0(p)$ with a point of order 3.  For such $A$, one can access properties of their N\'eron models despite not having explicit equations, via Mazur's study of the Eisenstein ideal.  However, it seems likely that there are only finitely many such $A$ of given dimension.              

Our next theorem verifies \cite[Conj.\ 1.1]{BKLOS:sha} for infinitely many abelian surfaces $A$, namely the quotients $A_{a,b,c} := J_{a,b,c}/\langle D_1\rangle$.  In fact, we show that for most of these $A$, a positive proportion of the quadratic twists $A_d$ satisfy $\#\Sha(A_d)[3] \geq 3^r$, for any fixed $r \geq 0$:

\begin{thm}\label{sha}
Fix $r \geq 0$ and order points in $\PP^2(\Z)$ by height.   Then for $100\%$ of rational points $(a \colon b \colon c) \in \mathbb{P}^2(\Z)$, the quadratic twists $A_d$ of the abelian surface $A = A_{a,b,c}$ satisfy
\[\liminf_{X \to \infty} \dfrac{\#\{|d| \leq X \colon \#\Sha(A_d)[3] \geq 3^r\}}{\#\{|d| \leq X\}} > 0.\]
\end{thm}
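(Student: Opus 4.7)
The plan is to perform a descent along the $\sqrt{3}$-isogeny. Write $\phi\colon J\to A$ for the isogeny with kernel $\langle D_1\rangle\simeq \Z/3\Z$, and $\hat\phi\colon A\to J$ for its dual, whose kernel is isomorphic to $\mu_3$. For every squarefree $d$, quadratic twisting yields $\phi_d$ and $\hat\phi_d$, and the standard descent exact sequence
\begin{equation*}
0 \to J_d(\Q)/\hat\phi_d(A_d(\Q)) \to \Sel_{\hat\phi_d}(A_d/\Q) \to \Sha(A_d)[\hat\phi_d] \to 0
\end{equation*}
implies
\[ \dim_{\FF_3}\Sha(A_d)[\hat\phi_d] \;\geq\; \dim_{\FF_3}\Sel_{\hat\phi_d}(A_d/\Q) \;-\; \mathrm{rk}\,J_d(\Q) \;-\; \dim_{\FF_3}J_d(\Q)[3]. \]
So it suffices to prove two statements: (a) for a positive proportion of $d$, the right-hand Selmer group has $\FF_3$-dimension at least $r$ more than the contribution from $J_d(\Q)$; and (b) $\mathrm{rk}\,J_d(\Q)$ and $\dim_{\FF_3}J_d(\Q)[3]$ are uniformly bounded on average over $d$.

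For (a), the idea is to make the $\sqrt{3}$-descent map completely explicit using Theorem \ref{model}. Since $\ker\hat\phi_d$ is a quadratic twist of $\mu_3$, the Selmer group lies inside a twisted Kummer group $H^1(\Q,\mu_3\otimes\chi_d)$, which can be computed via $(\Q(\sqrt{d})^*/(\Q(\sqrt{d})^*)^3)^{\chi_d=\minus 1}$. The roots of $H_2(x)$ and the constant $\lambda_2$ from the explicit model produce natural candidate classes, and the local conditions defining the Selmer group can be read off from the polynomials $G_2,H_2,\lambda_2$ specialized at $(a,b,c)$. By a sieve over squarefree $d$ whose prime factors satisfy controlled splitting behavior at $3$ and the bad primes of $A$, we add one independent Selmer class per suitably chosen prime factor of $d$, obtaining $\dim\Sel_{\hat\phi_d}(A_d)\geq r + C$ for a positive proportion of $d$, where $C$ bounds the rank/torsion contribution in (b). This is the same template used by Bhargava--Klagsbrun--Lemke Oliver--Shnidman for elliptic curves with a rational $3$-isogeny and by Shnidman for RM abelian surfaces.

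For (b), we bound the average $\sqrt{3}$-Selmer rank over the twist family using geometry-of-numbers / orbit-counting. Theorem \ref{model} gives an explicit presentation of $J_{abc}[\sqrt{3}]=\langle D_1\rangle\oplus\langle D_2\rangle$ over $\Q$, which trivializes the Galois representation on $\sqrt{3}$-torsion and reduces the average Selmer size in the quadratic twist family to counting integral orbits in an associated representation (a representation of a form amenable to Bhargava-type orbit counting, since the level structure is full). Standard averaging then yields $\mathrm{rk}\,J_d(\Q)\leq C'$ on average, and one easily checks $J_d(\Q)[3]$ is bounded for generic $(a,b,c)$.

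The main obstacle is step (a): arranging the local conditions at $p=3$ and at primes of bad reduction so that the sieved family of $d$ has positive density, while simultaneously guaranteeing that the newly-produced Selmer classes are independent of classes coming from $J_d(\Q)$. The $p=3$ analysis is the most delicate, requiring a careful examination of the local image of Kummer at $3$ using the explicit defining equations; producing a Zariski-open condition on $(a,b,c)$ that ensures the global sieve works (explaining the "$100\%$" in the statement) is the crux of the argument.
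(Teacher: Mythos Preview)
Your outline has the right flavor (descent along $\sqrt{3}$, separate ``large Selmer'' from ``small rank'') but it misses two structural points that the paper's proof hinges on.

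\textbf{Two isogenies, not one.} You work only with $\phi\colon J\to A$ and its dual $\hat\phi$. The paper instead uses \emph{both} quotients $A=J/\langle D_1\rangle$ and $B=J/\langle D_2\rangle$, giving two independent $3$-isogenies $\phi\colon J\to A$ and $\psi\colon J\to B$. The Tamagawa-ratio computations (Table~\ref{table:tamagawa ratio}) show that at primes of $abc$-reduction one has $c_p(\phi_d)=\tfrac13$ while $c_p(\psi_d)=3$, and at primes of $t$-reduction both are $\tfrac13$. With $r$ primes of each type one can choose $d$ so that $c(\phi_d)=3^{-2r}$ but $c(\psi_d)=1$ (Theorem~\ref{one or the other}). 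The $\psi$-side then gives, via \cite{shnidman:RM}, that a positive proportion of these $d$ have $\mathrm{rk}\,J_d(\Q)=0$, while the $\phi$-side forces $\dim\Sel_{\phi'_d}(A_d)\ge 2r$; together these yield $\dim\Sha(A_d)[3]\ge 2r$. With only $\phi$ and $\hat\phi$ you cannot decouple the two effects: $c(\phi_d)c(\hat\phi_d)=c(\sqrt3)$, so forcing $c(\phi_d)$ small forces $c(\hat\phi_d)$ large, and your average-rank bound via $\sqrt3$-Selmer would then blow up on exactly the congruence classes where you want it. Your step (b) and step (a) fight each other.

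\textbf{The source of ``$100\%$''.} You locate the density-one condition at $p=3$ and describe it as Zariski-open. It is neither. What is needed is that $C_{abc}$ have at least $r$ primes of $abc$-reduction and at least $r$ primes of $t$-reduction; this is a statement about the number of prime divisors of the values $a$, $b$, $c$, $t_3(a,b,c)$, $t_4(a,b,c)$ that avoid the other discriminant factors. That $100\%$ of $(a\!:\!b\!:\!c)$ satisfy this is an Erd\H{o}s--Kac-type fact for multivariable polynomial values (Theorem~\ref{erdos-kac}), not a local condition at any single prime. No Zariski-open set on $\PP^2$ can see the number of prime factors of a coordinate.

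Finally, the paper never constructs explicit Kummer classes from $G_2,H_2,\lambda_2$; all Selmer information is extracted from local Selmer ratios, which in turn reduce to the Tamagawa ratios computed from the reduction types in Section~\ref{reduction types}.
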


\begin{rmk}
The same result holds for the surfaces $B_{a,b,c} := J_{a,b,c}/\langle D_2\rangle$ as well. 
\end{rmk}

We say $A$ satisfies Conjecture $(A,3,r)$ if the lim inf in Theorem \ref{sha} is positive. Theorem \ref{sha}  follows from more precise results, which prove Conjecture $(A_{a,b,c},3,r)$ for finitely many $r$, depending on the number and types of primes of bad reduction for $C$; see Theorems \ref{sha6}-\ref{sha2} and \ref{A32}. 
For example, Theorem \ref{A32} implies Conjecture $(A_{a,b,c} \times B_{a,b,c},3,2)$ for all but $142$ of the $219,914$ points $(a \colon b \colon c) \in (\PP^2\backslash \Delta)(\Q)$ of height at most $40$ (Theorem \ref{thm:exp}).

As a second application of our explicit models for the curves $C = C_{a,b,c}$, we bound the average Mordell-Weil rank of the Jacobians $J_d$.  Combining this with the Chabauty-Kim method, we obtain statistical results on the number of rational points on the curves $C_d$ as well.  Our methods apply to any of the curves $C_{a,b,c}$. As an example, we carry out the full analysis for the (simple) Jacobian of minimal root conductor in our family.

\begin{thm}\label{ex1}
The Jacobian $J$ of the curve $C_{1,2,\minus1} \colon y^2 =8x^5 - 3x^4 - 2x^3 - 7x^2 + 4x + 20$
has root conductor $2\cdot 31$.  The average $\Z[\sqrt3]$-rank of $J_d(\Q)$ is at most $1.203$.  At least $23.3\%$ of twists have rank $0$ and at least another $41.6\%$ of twists have $\Z[\sqrt3]$-rank at most $1$.  At least $20.8\%$ of the latter twists have $\Z[\sqrt{3}]$-rank exactly equal to $1$.         
\end{thm}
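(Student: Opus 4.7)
The plan is to apply the paper's general machinery for the quadratic twist family of $\C_{abc}$ to the specific point $(a,b,c)=(1,2,\minus1)$ and to complete the numerical computations at its primes of bad reduction.

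First I verify the conductor claim by substituting $(a,b,c)=(1,2,\minus1)$ into the sextic $G_1^2+\lambda_1H_1^3$ from Section \ref{sec:multsqrt3}, confirming the displayed affine model, and computing its discriminant via the factorization \eqref{E:discfactors}--\eqref{E:fdisc}. This should pick out $\{2,31\}$ as the set of bad primes. Running Liu's algorithm for minimal regular models of genus two curves at each of $2$ and $31$ then produces the local conductor exponents, yielding root conductor $2\cdot 31$ (i.e.\ conductor $(2\cdot 31)^2$, with multiplicative reduction at both primes).

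For the average rank bound I use that the real multiplication $\iota$ survives quadratic twist, so $\mathrm{rk}_{\ZZ[\sqrt{3}]}J_d(\Q)\leq\dim_{\FF_3}\Sel_{\sqrt{3}}(J_d/\Q)$ for every squarefree $d$. The paper's earlier upper bound on the average $\sqrt{3}$-Selmer size in a quadratic twist family decomposes as a product of local factors; away from $\{2,3,31\}$ these are trivial, and at the remaining places I will compute the image of the local Kummer map using the explicit $\sqrt{3}$-level structure $\ZZ/3\ZZ\times\mu_3\simeq J[\sqrt{3}]$. Combining these local Selmer ratios should yield the average $\ZZ[\sqrt{3}]$-rank bound $1.203$.

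For the density statements I combine two complementary Selmer computations with a root-number parity argument. The $23.3\%$ rank-$0$ proportion consists of squarefree $d$ whose local invariants at $2$, $31$, and at the archimedean place simultaneously force $\Sel_{\sqrt{3}}(J_d/\Q)=0$; a standard squarefree sieve then gives the stated density. The additional $41.6\%$ of rank-$\leq 1$ twists comes from the analogous Selmer bound for the complementary isogeny into $A_{1,2,\minus1}$ (and its dual), which pins the remaining rank contribution at most $1$. Finally, the $20.8\%$ sub-proportion of rank exactly $1$ will follow by exhibiting a rational non-torsion point in $J_d(\Q)$ for a positive density of $d$, constructed from the divisor classes $D_1,D_2$ of the paper (or their images in $J_d$ after twisting), with infinite order verified via a local non-triviality check in the $\sqrt{3}$-descent.

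The principal obstacle I anticipate is the explicit local analysis at $p=2$, where the $\sqrt{3}$-Kummer map is sensitive to the local decomposition of $\mu_3$ over $\ZZ_2$ and to the Tamagawa factors of the twisted N\'eron models. Every numerical constant in the theorem is a product of such local contributions, so the main work lies in carefully determining the local Selmer image at $2$ and at $31$ (the latter being multiplicative, hence more tractable) and then propagating the densities through the squarefree sieve to match the stated fractions exactly.
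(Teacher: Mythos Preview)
Your overall strategy---compute the discriminant factors to isolate the bad primes $\{2,31\}$, then tabulate the local Selmer ratios $c_p(\phi_d)$ and $c_p(\psi_d)$ at $p\in\{2,3,31,\infty\}$ and feed them into the geometry-of-numbers bounds from \cite{shnidman:RM}---matches the paper's approach for the conductor, the average-rank bound, and the $23.3\%$ and $41.6\%$ density claims. (The paper organizes this via the sets $T_{m,n}=\{d:c(\phi_d)=3^m,\ c(\psi_d)=3^n\}$ and applies \cite[Thm.~5.2]{shnidman:RM} on each set, but this is essentially what you describe.)

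The genuine gap is in your argument for the $20.8\%$ of twists with $\Z[\sqrt3]$-rank exactly $1$. You propose to exhibit an explicit non-torsion point in $J_d(\Q)$ ``constructed from the divisor classes $D_1,D_2$'' and verify infinite order by a local check. This cannot work: $D_1$ and $D_2$ are by construction the generators of the $\sqrt3$-torsion, so any point built from them is $3$-torsion in every twist. There is no evident supply of rational points of infinite order on $J_d$ varying with $d$, and producing one for a positive proportion of $d$ is exactly the hard direction. The paper instead invokes the Iwasawa-theoretic converse theorem of Castella--Grossi--Lee--Skinner \cite[Cor.~5.2.2]{CGLS:IMC}: for $d$ in the relevant $T_{m,n}$ with $\dim_{\FF_3}\Sel_{\sqrt3}(J_d)=1$ and with $\ker\phi_d\not\simeq\Z/3\Z,\mu_3$ over $\Q_3$ (a density-$\tfrac12$ condition), one concludes that $J_d(\Q)$ has $\Z[\sqrt3]$-rank exactly $1$ and $\Sha(J_d)$ is finite. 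This $p$-adic $L$-function input is what converts the Selmer-rank-$1$ information into an honest Mordell--Weil rank lower bound; without it (or an analogous analytic input) your plan does not produce the $20.8\%$ claim.
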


From these rank results, we deduce uniform bounds on the size of $\#C_d(\Q)$, for many $d$.
This is clear when $\mathrm{rk} \, J_d(\Q) = 0$, because then $C_d(\Q)$ can only contain Weierstrass points (for $d$ large).  When $\mathrm{rk}\, J_d(\Q) > 0$, the method of Chabauty for bounding $\#C_d(\Q)$ does not apply, since the real multiplication implies $\mathrm{rk}_\Z \, J_d(\Q) \geq 2 = g(C_d)$.    Instead, we use recent work of Balakrishnan and Dogra \cite{BD:chabauty} on the Chabauty-Kim method.	

\begin{thm}\label{ex2}
Let $C = C_{1,2,\minus1}$, and consider integers $d \equiv 1 \pmod 3$ ordered by $|d|$.  For at least $23.3\%$ of such $d$, we have $\#C_d(\Q) = 2$, and for at least another $41.6\%$ of $d$ we have $\#C_d(\Q) \leq 154133$.  
\end{thm}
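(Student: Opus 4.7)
The plan is to combine the rank statistics from Theorem \ref{ex1} with Chabauty--Coleman in the rank-$0$ case and the quadratic Chabauty--Kim method of Balakrishnan--Dogra in the rank-$1$ case. First, observe that the defining polynomial $f(x) = 8x^5 - 3x^4 - 2x^3 - 7x^2 + 4x + 20$ vanishes at $x=-1$, so $(-1,0)$ is a rational Weierstrass point of every quadratic twist $C_d$; together with the odd-degree point at infinity, this gives $\#C_d(\Q)\geq 2$ unconditionally. Factoring $f(x)=(x+1)(8x^4-11x^3+9x^2-16x+20)$ and checking that the quartic factor has no rational roots shows that these are the \emph{only} two rational Weierstrass points on $C_d$ for any $d$.

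For the $\geq 23.3\%$ of twists with $\Z[\sqrt 3]$-rank zero, $J_d(\Q)$ is a finite torsion group, and the class $[(-1,0)]-[\infty]$ supplies a nontrivial $2$-torsion element. To upgrade $\#C_d(\Q)\geq 2$ to equality, I would show that for these $d$, $J_d(\Q)_{\mathrm{tors}}$ is exactly $\Z/2\Z$, so that the Abel--Jacobi embedding based at $\infty$ forces the only rational points on $C_d$ to be $\infty$ and $(-1,0)$. The standard injection $J_d(\Q)_{\mathrm{tors}}\hookrightarrow J_d(\FF_p)$ for any prime $p$ of good reduction, applied at two small primes whose group orders have $\gcd$ equal to $2$, makes this a routine check uniformly in $d$ coprime to those auxiliary primes; the finite exceptional set of $d$ can be absorbed into the density statement.

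For the $\geq 41.6\%$ additional twists with $\Z[\sqrt 3]$-rank at most $1$, the $\Z$-rank is at most $2 = g(C_d)$, so classical Chabauty--Coleman just fails. This is exactly the regime addressed by Balakrishnan--Dogra \cite{BD:chabauty}: their quadratic Chabauty construction, applicable when the Jacobian carries real multiplication and the $\Z[\sqrt 3]$-rank is strictly less than $g$, produces a non-constant $p$-adic locally analytic function on $C_d(\Q_p)$ vanishing on $C_d(\Q)$, built from $p$-adic iterated integrals and the Hodge/Frobenius structure on the fundamental group. A residue-disc-by-residue-disc zero count via Newton polygon estimates then yields an explicit finite bound on $\#C_d(\Q)$.

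The main obstacle is producing the explicit constant $154133$ uniformly in $d$. Concretely, I would fix a single small prime $p$ (likely $p=5$ or $p=7$) of good reduction for $C$—which remains a prime of good reduction for $C_d$ whenever $p\nmid d$—and execute Balakrishnan--Dogra's recipe at $p$. The bound will emerge as the number of residue discs on $C_d(\FF_p)$ times a per-disc zero count controlled by the Newton polygon of the $p$-adic function, corrected for the two forced rational points. The delicate step is verifying that the same prime $p$ and the same numerical bounds work uniformly across a density-positive set of admissible $d$; the congruence $d\equiv 1\pmod 3$ is precisely what ensures that the $\sqrt 3$-endomorphism and the associated $p$-adic Hodge data descend compatibly to $J_d$, keeping the construction genuinely uniform in the family.
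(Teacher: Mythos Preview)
Your rank-$0$ argument is essentially fine, though the paper handles it more cleanly: rather than bounding $J_d(\Q)_{\mathrm{tors}}$ prime by prime, it invokes Raynaud's Manin--Mumford theorem to conclude that for all but finitely many $d$ the only torsion points on $C_d$ are Weierstrass points, hence $\#C_d(\Q)=2$.

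The genuine gap is in the $41.6\%$ case. You have misread the role of the congruence $d\equiv 1\pmod 3$. The $\sqrt{3}$-endomorphism is already defined over $\Q$ on $J$, and quadratic twisting preserves the $\Q$-endomorphism ring; nothing about Hodge data needs to ``descend compatibly.'' The actual reason for imposing $d\equiv 1\pmod 3$ is that $J$ has good ordinary reduction at $3$, and this condition guarantees that $J_d$ \emph{also} has good reduction at $3$, so that one may apply Balakrishnan--Dogra with $p=3$. Your proposed primes $p=5$ or $p=7$ would not explain the congruence condition at all, and would give much larger residue-disc counts.

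Relatedly, you are missing the two inputs that actually produce the constant $154133$. First, the paper takes $p=3$ precisely because $\#C_d(\FF_3)=2$ (both Weierstrass), so there are only two residue discs to bound. Second, the per-disc bound in \cite[\S5.2]{BD:chabauty} depends on local heights at the bad primes, and in particular on the Tamagawa number $c_2(J_d)=66$ over $\Z_2$, computed via van Bommel's algorithm. This large Tamagawa number is the dominant contribution to $154133$; without identifying it your Newton-polygon sketch cannot reach any explicit constant.
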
 

This seems to be the first non-trivial example of a genus 2 curve $C$ for which $\#C_d(\Q)$ can be uniformly bounded for a majority of $d$.  We say ``non-trivial'' since $C_d$ has rational Weierstrass points (so there are no local obstructions) and admits no maps to elliptic curves (even over $\bar\Q$).  The bound $154133$ is surely far from optimal, but should improve as the Chabauty-Kim method is further refined. 
 
\subsection{Methods and outline}
Theorem \ref{model} is proved in two steps.  In Sections \ref{S:AbelianSurfacesWithZ3Z3} and \ref{sec:multsqrt3}, we  give an explicit parameterization of genus 2 Jacobians $J$ with isotropic $(\Z/3 \times \mu_3)$-level structure, by proving a twisted version of a result of the first two authors and Testa \cite{bruinflynntesta:three}.  Such Jacobians come equipped with a $(3,3)$-isogeny $\pi \colon J \to J'$, and the surface $\mathcal{H}_3$ is the locus where $\pi = \hat\pi$ (and hence $J = J'$ and $\pi = [\sqrt3]$).  This turns out to be a rational surface, and we explicitly write out the universal curve $C_{a,b,c}$ over an open part of the moduli space.

The defining equation for $C_{a,b,c}$ is cumbersome, but most of the geometry is encoded in the ten discriminant factors, which are quite simple and display many symmetries; see (\ref{E:discfactors}) and (\ref{E:fdisc}) in Section~\ref{sec:multsqrt3}.  The non-smooth locus $\Delta = 0$ is a union of genus 0 curves (four lines and six conics) in $\PP^2_\Z$. By explicit computations in the generic fiber, we determine the reduction type of $C_{a,b,c}$ over $\Z_p$ away from the 18 residue disks lying over  the points in $\PP^2(\bar\FF_p)$ where these genus 0 curves intersect (Theorem \ref{semistable reduction}).  
Using the explicit equations, we determine when the 3-torsion points $D_1$ and $D_2$ reduce to the identity component in the N\'eron model of the Jacobian, which allows us to compute the respective Tamagawa ratios $c_p(A)/c_p(J)$ and $c_p(B)/c_p(J)$, listed in Table~\ref{table:tamagawa ratio}. We find that up to some symmetries, the discriminant factors exactly correspond to the possible values of the two Tamagawa ratios.

The Tamagawa ratios are needed to study the Selmer groups of the 3-isogenies $\phi_d \colon J_d \to A_d$ and $\psi_d \colon J_d \to B_d$.  Using an analysis of local Selmer ratios and Poitou-Tate duality, we give a simple criterion in terms of $(a \colon b \colon c)$ for the set $\{d  \colon \#\Sha(A_d)[3] \geq 3^r\}$ to have positive density (Theorem \ref{sha6}); here $A = A_{a,b,c}$.  We show that when there are enough primes dividing specific discriminant factors, then there are many twists $d$ such that $\Sel(\phi_d)$ is large, whereas $\Sel(\psi_d)$ is small on average.  This forces $\Sha(A_d)[3]$ to be large, not just on average, but for many individual $d$ as well.  This method builds on the third author's work \cite{shnidman:RM} and his work with Bhargava, Klagsbrun, and Lemke Oliver  \cite{BKLOS:selmer}.  To conclude that the criterion holds for $100\%$ of $(a \colon b \colon c) \in \PP^2(\Z)$, we invoke Erdos-Kac type results for values of multivariable polynomials \cite{ELS,LOLS}. This proves Theorem \ref{sha}, which can be thought of as a higher-dimensional version of \cite[Thms.\ 1.2-1.4]{BKLOS:sha}.     

In Section \ref{examples}, we prove Theorems \ref{ex1} and \ref{ex2}, and show how to prove explicit statistical results for  $\mathrm{rk}\, J_d(\Q), \, \#C_d(\Q)$, and $\#\Sha(A_d)[3]$, for a fixed curve $C = C_{a,b,c}$ in our family.  The tools here are the geometry-of-numbers techniques of \cite{shnidman:RM, BKLOS:selmer, BKLOS:sha}, results of Balakrishnan-Dogra \cite{BD:chabauty}, and a recent result of Castella-Gross-Li-Skinner \cite{CGLS:IMC} in Iwasawa theory.  

Finally, we recall that our formulas for Tamagawa ratios are only valid away from the 18 points of intersection in $\PP^2(\bar \FF_p)$. Over these 18 degenerate points, the Tamagawa ratios could in principle be computed for specific $(a \colon b \colon c)$ but giving general formulas seems harder.  One issue is that $J_{a,b,c}$ may fail to be semistable at $p$ in this locus, even for $p$ large.  Recall that an abelian variety $A$ over $\Q_p$ with full $\ell$-level structure is automatically semistable if $p \neq \ell > 2$,  by a theorem of Raynaud \cite[Expos\'e IX, 4.7]{SGA7I}.  Similarly, one can show that if $A$ has $\Z[\sqrt{\ell}]$-multiplication and full $\sqrt{\ell}$-level structure, then $J$ is semistable if  $p \neq \ell > 3$.  Our curves show that the inequality $\ell > 3$ is sharp.  Computing Tamagawa ratios when $J$ has additive reduction is quite subtle, since $A$ and $B$ are not themselves Jacobians. Fortunately, additive reduction on $\mathcal{H}_3$ only appears in codimension two, so this is not an issue in the proof of Theorem \ref{sha}.    

\subsection{Acknowledgments}
Thanks go to Michael Stoll for organizing Rational Points 2017, where the authors first discussed this project.  We also thank Levent Alp\"{o}ge, Netan Dogra, Robert Lemke Oliver, Dino Lorenzini, and Daniel Loughran for helpful conversations. The authors used sage \cite{sage} and Magma \cite{magma} for many computations in this paper.

\section{Abelian surfaces with a $(3,3)$-isogeny}
\label{sec:intro}\label{S:AbelianSurfacesWithZ3Z3}

In preparation of the work in Section~\ref{sec:multsqrt3} we first describe a slightly more symmetric situation.
We work over a field $k$ of characteristic not 2 or 3, and consider the group scheme $\Sigma=(\ZZ/3\ZZ)^2$ and its dual $\Sigma^\vee=(\mu_3)^2$. We equip $\Sigma\times \Sigma^\vee$ with the pairing expressing the duality.

Let $\sA(3)$ be the moduli space of principally polarized surfaces $A$, together with an isomorphism $\Sigma\times\Sigma^\vee\to A[3]$, compatible with the Weil pairing $e_3\colon A[3]\times A[3]\to \mu_3$. This moduli space is birational to the Burkhardt quartic threefold in $\PP^4$:
\[ \sB\colon y_0(y_0^3+y_1^3+y_2^3+y_3^3+y_4^3)+3 y_1y_2y_3y_4=0,\]
with automorphism group $\PSp(4,3)$; see \cite{BruinNasserden2018}. The order $3$ cyclic subgroups of $\Sigma\times\Sigma^\vee$ are in bijective correspondence with a collection of $40$ special planes, called \emph{$j$-planes} on $\sB$. Among these are the planes $J_i: y_0=y_i=0$, corresponding to the subgroups of $\Sigma$.

The subgroup of automorphisms of $\sB$ given by
\[H=\{ (y_0:\cdots:y_4)\mapsto (y_0:\zeta^{i_1}y_1:\zeta^{i_2}y_2:\zeta^{i_3}y_3:\zeta^{i_4}y_4): i_1+i_2+i_3+i_3\equiv 0\pmod{3}\}\]
leaves invariant the planes $J_1,J_2,J_3,J_4$, and corresponds to a subgroup of $\PSp(4,3)$ that fixes a maximal isotropic subspace.
We denote by $\sA(\Sigma)$ the moduli space of principally polarized abelian surfaces $A$ together with an injective homomorphism $\Sigma\to A[3]$, such that the Weil pairing pulls back to a trivial pairing.
Then we have
\[\sA(\Sigma)\simeq \sA(3)/H.\]
 This data is equivalent to specifying $A$ together with two $3$-torsion points that pair trivially under the Weil pairing.
Setting $x_i=(y_i/y_0)^3$ for $i=1,\ldots,4$, we obtain an affine birational model of $\sA(\Sigma)$. Using the standard symmetric functions $\sigma_i$ defined by
\[(t+x_1)\cdots(t+x_4)=t^4+\sigma_1t^3+\sigma_2t^2+\sigma_3t+\sigma_4\]
we find the model
\[\sX\colon (1+\sigma_1)^3+27\sigma_4=0.\]
We see that $S_4\simeq \PGL_2(\FF_3)$ acts by permutation on the $x_i$, corresponding to the action of $\Aut(\Sigma)$ on the cyclic subgroups.

We use a construction from  \cite{BruinNasserden2018} that, given a point $y\in \sB$ and a $j$-plane $J$, yields a plane cubic curve
\[E_{J,y}\colon W^3+3\lambda H(X)W+2\lambda G(X)=0,\]
together with a cubic map $E_{J,y}\to\PP^1$ given by projection on the $X$-coordinate. The discriminant of this cubic extension gives rise to a genus $2$ cover
\[C_y\colon Y^2=G_{J,y}(X)^2+\lambda H_{J,y}(X)^3,\]
together with a marked cyclic subgroup of $\Jac(C_y)[3]$.
By \cite[Proposition~2.8]{BruinNasserden2018}, this construction realizes the moduli interpretation of $\sB$.

We perform this construction for the generic point $y$ on $\sB$, and the planes $J_2,J_3$. With an appropriate choice of the coordinate $X$ we find that $C_y$ descends to a curve $C_x$ over $\sX$ of the form
\[C_x\colon Y^2=F_x(X):=G_1^2+\lambda_1H_1^3=G_2^2+\lambda_2H_2^3,\]
with
\[
\begin{aligned}
H_1&=X^2+\frac{\sigma_1+1}{3x_2}X-\frac{x_1x_4}{x_2}\\
\lambda_1&=\frac{x_4(x_1+1)}{x_2}\\
G_1&=\frac{x_4-x_2}{2x_2}X^3-\frac{3x_1x_4+\sigma_1+1}{2x_2}X^2
  -\frac{(3x_1x_4+\sigma_1+1)(\sigma_1+1)}{6x_2^2}X+
\frac{x_1x_4(2x_1x_4+x_3+x_4)}{2x_2^2}
\end{aligned}
\]
and
\[
\begin{aligned}
H_2&=X^2+X+\frac{\sigma_1+1}{3x_2}\\
\lambda_2&=-\frac{x_4^2x_1(x_1+1)}{x_2^2}\\
G_2&=\frac{2x_1x_4+x_2+x_4}{2x_2}X^3-\frac{3x_1x_4+\sigma_1+1}{2x_2}X^2
+\frac{(3x_1x_4+\sigma_1+1)(\sigma_1+1)}{6x_2^2}X+\frac{x_1x_4(x_4-x_3)}{2x_2^2}.
\end{aligned}
\]
The isomorphism $\sA(\Sigma)\simeq \sA(\Sigma^\vee)$ is expressed by the fact that
\[C_x^{(\minus 3)}\colon -3Y^2=F_x(X):=G_1^2+\lambda_1H_1^3=G_2^2+\lambda_2H_2^3\]
has $\Sigma^\vee\subset \Jac(C_x^{(\minus 3)})[3]$.

\begin{lemma} The variety $\sX$ is rational, via the birational map $\sX\to\AA^3$ defined by
\begin{equation}\label{E:rst_parametrization}
(r,s,t)= \left(\frac{\sigma_1 + 1}{3x_2},
\frac{x_2}{x_1x_4 + x_4},
\frac{-x_1x_4}{x_2}
\right).
\end{equation}
\end{lemma}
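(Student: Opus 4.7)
The plan is to establish rationality by constructing an explicit rational inverse $\AA^3 \dashrightarrow \sX \subset \AA^4$ to the stated map. Since $\sX$ is an irreducible $3$-dimensional hypersurface, it suffices to recover $(x_1,x_2,x_3,x_4)$ from $(r,s,t)$ by rational operations on a dense open subset; the compositions will then automatically invert each other.

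First I would exploit the simple algebraic form of the definitions of $s$ and $t$. The identity $t = -x_1x_4/x_2$ gives $x_1x_4 = -tx_2$, while $s = x_2/(x_4(x_1+1))$ rewrites as $x_1x_4 + x_4 = x_2/s$. Subtracting eliminates $x_1x_4$ and yields $x_4 = x_2(1+st)/s$; dividing back then gives $x_1 = -ts/(1+st)$. At this stage $x_1$ is already a rational function of $s$ and $t$ alone, and $x_4$ is determined up to the scalar $x_2$.

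Next I would bring in the defining equation of $\sX$. Using $1+\sigma_1 = 3rx_2$, the equation $(1+\sigma_1)^3 + 27\sigma_4 = 0$ reads $27r^3x_2^3 + 27x_1x_2x_3x_4 = 0$; substituting $x_1x_4 = -tx_2$ produces $x_3 = r^3x_2/t$. Finally, expanding $\sigma_1 = 3rx_2 - 1$ with the expressions already obtained for $x_1, x_3, x_4$ gives a linear equation in $x_2$, and solving yields $x_2$ explicitly as a rational function of $r, s, t$. The resulting quadruple lies on $\sX$ by construction, since the defining equation of $\sX$ was used when extracting $x_3$. The only obstacle is bookkeeping: identifying the locus where denominators such as $x_2$, $x_4$, $1+st$, and the final denominator for $x_2$ vanish, in order to specify the dense open set where the birational equivalence holds. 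Since $\sX$ is irreducible this does not affect the conclusion.
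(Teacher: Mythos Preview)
Your argument is correct: you recover $x_1$ from $s,t$ alone, then $x_4$ and $x_3$ as scalar multiples of $x_2$, and finally $x_2$ from the linear relation coming from $\sigma_1=3rx_2-1$. This gives an explicit rational inverse on a dense open, which is exactly what is needed.

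The paper itself does not supply an argument here; it simply refers to \cite{bruinflynntesta:three}*{Theorem~6} for the derivation of this parametrization. Your proposal is thus more self-contained than the paper's treatment, and follows the natural direct route of solving back for the $x_i$.
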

\begin{proof} See \cite{bruinflynntesta:three}*{Theorem~6} for a derivation of this parametrization.
\end{proof} 

The isogeny $A\mapsto A/\Sigma$ induces a map $\psi\colon \sA(\Sigma)\to\sA(\Sigma^\vee)$. Since $\Sigma^\vee$ is a quadratic twist of $\Sigma$, we have that $\sA(\Sigma)$ is naturally isomorphic to $\sA(\Sigma^\vee)$, so we obtain the analogue of an Atkin-Lehner involution on $\sX$. Translating the formula for $\psi_0'$ in \cite{bruinflynntesta:three}*{Lemma~16}, we find that, where the right-hand-side is defined, we have
\[
\begin{split}\psi(x_1,x_2,x_3,x_4)=\big(h(x_1,x_2,x_3,x_4),
\;h(x_2,x_1,x_3,x_4),
\;h(x_3,x_1,x_2,x_4),
\;h(x_4,x_1,x_2,x_3)\big),\\
\text{ where }
h(x_1,x_2,x_3,x_4)=\frac{3x_1(\sigma_1-x_1+1)-(\sigma_1+1)^2}{3(x_2+1)(x_3+1)(x_4+1)}.
\end{split}\]
It follows that
\[\Jac(C_x)/\Sigma \simeq \Jac(C_{\psi(x)}^{(\minus 3)}).\]

The locus on $\sX$ fixed by $\psi$ is defined by
\begin{equation}\label{E:H3prime}
\sH_3': (\sigma_1+1)^2(\sigma_1-8)-27\sigma_3 = 0.
\end{equation}
Thus for $x\in \sH_3'$, we have that $\Jac(C_x)$ acquires $\sqrt{3}$-multiplication over $k(\sqrt{\minus3})$.

\begin{prop}\label{L:paramg}
The variety $\sH_3'$ is rational, which is demonstrated by the fact that the rational image under \eqref{E:rst_parametrization} is birationally parametrized by restricting the map $\AA^2\to \AA^3$ defined by
	\begin{equation*}
	\begin{split}
	r &= (4 u^2 v^4 + 11 u^2 v^2 + 8 u v^2 + 8 u^2 + 12 u + 4)
	/\bigl( 2  u( u+2) \bigr),\\
	s &= 8  u^2 ( u+2)^3 (4 u v^2 + 5 u + 4)(4 u v^3 + 5 u v -  u + 4 v - 2)\\
	&\quad /\Bigl( (4 u v^3+5 u v+ u+4 v+2)(4 u^2 v^4+11 u^2 v^2+8 u v^2+6 u^2+8 u+4)\\
	&\quad\quad   (16 u^2 v^6+56 u^2 v^4-8 u^2 v^3+32 u v^4+57 u^2 v^2-16 u v^3\\
	&\quad\quad\quad -10 u^2 v+56 u v^2+16 u^2-28 u v+16 v^2+16 u-16 v+4)\\
	&\quad\quad (4 u^2 v^4+11 u^2 v^2+8 u v^2+8 u^2+12 u+4) \Bigr),\\
	t &= -(4 u^2 v^4 + 11 u^2 v^2 + 8 u v^2 + 8 u^2 + 12 u + 4)^2\\
	&\quad (16 u^2 v^6+56 u^2 v^4+32 u v^4+49 u^2 v^2+40 u v^2+8 u^2+16 v^2-4 u-4)\\
	&\quad /\bigl( 8  u^2 (4 u v^2 + 5 u + 4) ( u+2)^3 \bigr),
	\end{split}
	\end{equation*}
\end{prop}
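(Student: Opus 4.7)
The plan is to verify the proposition by a direct algebraic computation, conveniently carried out in a computer algebra system. First, using the birational inverse of \eqref{E:rst_parametrization} supplied by \cite{bruinflynntesta:three}*{Theorem~6}, we express $\sigma_1$ and $\sigma_3$ as rational functions of $(r,s,t)$ on $\sX$. Substituting these into the defining equation \eqref{E:H3prime} of $\sH_3'$ and clearing denominators yields, after isolating the appropriate irreducible factor, a polynomial equation $F(r,s,t)=0$ cutting out the image $\sH_3 \subset \AA^3_{(r,s,t)}$ of $\sH_3'$ under \eqref{E:rst_parametrization}.

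Second, we substitute the rational functions $r(u,v), s(u,v), t(u,v)$ from the statement into $F$ and verify that the result is identically zero as a rational function of $u,v$; this confirms that the map $\phi \colon (u,v)\mapsto (r(u,v),s(u,v),t(u,v))$ takes values in $\sH_3$. For the birationality of $\phi$, we observe that $r(u,v)$ and $t(u,v)$ involve only even powers of $v$, so that eliminating from the system $r=r(u,v^2),\ t=t(u,v^2)$ (for example by resultants) yields $u$ and $v^2$ as rational functions of $(r,t)$. Moreover, both the numerator and denominator of $s(u,v)$, viewed as polynomials in $v$, have the form $P(u,v^2) + v\, Q(u,v^2)$, so $v$ itself is rationally expressible in terms of $u$, $v^2$, and $s$. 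Combining these, we obtain an explicit rational inverse $(r,s,t)\mapsto(u,v)$, proving that $\phi$ is a birational parametrization of $\sH_3$ and hence that $\sH_3'$ is rational.

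The main obstacle is the volume of the symbolic computation: the polynomial $F(r,s,t)$ and the substituted rational functions are sizable, so naive expansion is costly. The standard remedy is to verify the identity modulo several small primes before performing the full symbolic check; the companion code \cite{BFScode} makes the computation reproducible.
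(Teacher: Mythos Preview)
Your verification approach is sound and proves the proposition, but it is genuinely different from the paper's argument. The paper does not check the given formulas after the fact; instead it \emph{derives} them. Concretely, the paper first writes down the image of $\sH_3'$ in $(r,s,t)$-coordinates as the explicit polynomial $g(r,s,t)=0$ of equation~\eqref{eq:grst}, then observes that $g$ is quadratic in $s$. The discriminant with respect to $s$ factors as $(r-1)^2(r^2-t)^2$ times a sextic in $r,t$, so rational points with rational $s$ correspond to points where that sextic is a square. The paper then notes this sextic is quadratic in $t$ (and in the square root), recognises a conic bundle with a section after a suitable base change, and parametrizes accordingly; the displayed formulas fall out of this.

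What each approach buys: the paper's route explains \emph{why} such a parametrization exists and how to find it---the quadratic-in-$s$ structure plus the conic-bundle trick is the actual content. Your route is a legitimate certificate: once the formulas are on the table, plugging in and exhibiting an inverse is a complete proof, and your observation that $r,t$ depend only on $v^2$ while $s$ carries the odd part of $v$ is a clean way to organise the inverse computation. One caution: the step ``eliminating from $r=r(u,v^2),\ t=t(u,v^2)$ yields $u$ and $v^2$ as rational functions of $(r,t)$'' is not automatic---it presupposes that the map $(u,v^2)\mapsto(r,t)$ is birational onto $\AA^2$, which must itself be checked (e.g.\ by actually producing the inverse or by a degree/Jacobian computation). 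As long as your computer-algebra script records that inverse explicitly, the argument is complete.
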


\begin{proof}
The image of $\sH_3'$ under  \eqref{E:rst_parametrization} is given by
\begin{equation}\label{eq:grst}
	\begin{aligned}
		g(r,s,t) = r^6 s^2 t + r^6 s^2 - 3 r^5 s^2 t - 3 r^4 s^2 t^2 
		+ r^3 s^2 t^3 + r^6 s - 3 r^4 s^2 t + 12 r^3 s^2 t^2\\ 
		\quad - 3 r^2 s^2 t^3 - 3 r^5 s - 3 r^4 s t + r^3 s^2 t 
		+ 2 r^3 s t^2 - 3 r^2 s^2 t^2 - 3 r s^2 t^3 + s^2 t^4 + 12 r^3 s t\\ 
		\quad  - 6 r^2 s t^2 + s^2 t^3 + r^3 t - 3 r^2 s t 
		- 3 r s t^2 + 2 s t^3 + r^3 - 3 r^2 t + s t^2 + t^2.
	\end{aligned}
\end{equation}
This is quadratic in~$s$, and the discriminant with respect to~$s$ is
\begin{equation}\label{eq:discrims}
	(r-1)^2(r^2-t)^2(r^6-4r^5-4r^4t+14r^3t-4r^2t-4rt^2+t^2).
\end{equation}
Hence parametrizing $r^6-4r^5-4r^4t+14r^3t-4r^2t-4rt^2+t^2 = \tau^2$ is
equivalent to our original question. This equation is quadratic in $t,\tau$ and with $\sigma=\tau/(2t^2(r-1))$ we find it is birational to a conic bundle over $k(\sigma)$ with a section. This leads to the parametrization given in the lemma.
\end{proof}


\section{Abelian surfaces with multiplication by~$\sqrt{3}$ over the ground field}
\label{sec:multsqrt3}

We observe that for $\Sigma = \ZZ/3\times\mu_3$ we have $\Sigma\simeq \Sigma^\vee$, so if we adjust the constructions in Section~\ref{S:AbelianSurfacesWithZ3Z3} we end up with an involution on $\sA(\Sigma)$, given by $A\mapsto A/\Sigma$. The fixed locus of this involution corresponds to abelian varieties with multiplication by $\sqrt{3}$ \emph{defined over the ground field}. In order to take advantage of the convenient formulas we encountered in Section~\ref{S:AbelianSurfacesWithZ3Z3}, we ``twist'' $\sB$. The resulting variety is in fact isomorphic to $\sB$ again. We set $\rho^2=-3$ and obtain $\sW$ with coordinates $(w_0:w_1:w_2:w_3:w_4)$ by setting
\[(y_0:y_1:y_2:y_3:y_4)=(w_0:w_1+\rho w_4:w_2:w_3:w_1-\rho w_4).\]
We obtain the corresponding twist $\sZ$ of $\sX$ by setting
\begin{equation}\label{E:z_in_x}
(x_1,x_2,x_3,x_4)=(z_1+\rho z_4,z_2,z_3,z_1-\rho z_4),
\end{equation}
and the corresponding degree $27$ map $\sW\to \sZ$ over $k$ is given by
\[z_1=(w_1^3-9w_1w_4^2)/w_0^3, z_2=(w_2/w_0)^3, z_3=(w_3/w_0)^3, z_4=(3w_1^2w_4-3w_4^3)/w_0^3.\]

\begin{lemma}\label{L:parmZ} $\sZ$ is rational via the birational map $\sZ\to\AA^3$ defined by
	\[(u,v,w)=\left(\frac{2z_1+z_2+z_3+1}{3z_2},\frac{2z_1}{z_2},\frac{2z_4}{z_2}\right)\]
\end{lemma}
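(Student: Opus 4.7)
The plan is to prove birationality by explicitly inverting the proposed map. First, I would write down a defining equation for $\sZ$ by substituting the twist relation \eqref{E:z_in_x} into the equation $(1+\sigma_1)^3 + 27\sigma_4 = 0$ that cuts out $\sX$. Because the cross terms in $\rho$ cancel, one finds $\sigma_1 = 2z_1 + z_2 + z_3$ and $\sigma_4 = z_2 z_3(z_1^2 + 3 z_4^2)$, so $\sZ$ is defined over $k$ by
\[(1 + 2z_1 + z_2 + z_3)^3 + 27\, z_2 z_3 (z_1^2 + 3z_4^2) = 0.\]

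Next I would solve for the $z_i$ in terms of $u,v,w$. The formulas for $v$ and $w$ immediately give $z_1 = vz_2/2$ and $z_4 = w z_2/2$, while the formula for $u$ rearranges to $z_3 = (3u - v - 1)z_2 - 1$; only $z_2$ remains to be pinned down. Substituting these expressions into the defining equation, and using the collapses $1 + 2z_1 + z_2 + z_3 = 3uz_2$ and $z_1^2 + 3z_4^2 = (v^2+3w^2)z_2^2/4$, after dividing by the common factor $27 z_2^3/4$ the equation reduces to an expression linear in $z_2$, which yields
\[z_2 = \frac{4u^3 - (v^2 + 3w^2)}{(v^2+3w^2)(v+1-3u)}.\]
Combined with the formulas for $z_1,z_3,z_4$ above, this exhibits a rational inverse to $(z_1,z_2,z_3,z_4)\mapsto(u,v,w)$ and hence establishes birationality of the map $\sZ\to \AA^3$.

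There is no substantive obstacle here: the content of the lemma is the happy observation that, in the twisted coordinates, the symmetric-function ingredients of the $\sX$-equation collapse to expressions that are linear in the scaling coordinate $z_2$ once $u,v,w$ are fixed. One need only verify that the denominators $v^2+3w^2$ and $v+1-3u$ do not vanish identically on $\sZ$, which follows because their vanishing cuts out proper subvarieties: $v^2+3w^2 = 0$ forces $z_1^2 + 3z_4^2 = 0$, while $v+1-3u = 0$ corresponds to $z_3 = -1$.
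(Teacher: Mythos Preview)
Your proposal is correct and is precisely the ``direct computation'' the paper alludes to: you write the defining equation of $\sZ$ in the $z$-coordinates, solve linearly for $z_2$ after substituting $z_1,z_3,z_4$ in terms of $u,v,w,z_2$, and observe that this furnishes an explicit rational inverse. The paper's own proof consists of the single phrase ``Direct computation,'' so your argument simply unpacks that computation in full.
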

\begin{proof}Direct computation. \end{proof}

We have two $j$-planes $w_0=w_2=0$ and $w_0=w_3=0$ on $\sW$ as well, which we can use to construct a curve
\[C_w\colon Y^2=G_a(X)^2+\lambda_aH_a(X)^3=-3\big(G_b(X)^2+\lambda_b H_b(X)^3\big).\]
Similar to our approach in Section~\ref{S:AbelianSurfacesWithZ3Z3} we adjust our choice of $X$-coordinate to descend to a curve $C_z$ over $\sZ$.
The answer is less elegant this time, and we give the results in terms of the rational parametrization from Lemma~\ref{L:parmZ}. 
\begin{equation}\label{eq:Cuvw}
\begin{split}
H_a &= -3(x^2+3 u^3+3 u^2 v-3 u x^2+v x^2+3 u^2-3 v^2-9 w^2),\\
\lambda_a &= 4(4 u^3-3 u v^2-9 u w^2+v^3+3 v w^2),\\
G_a &= -18 w (3 u-v-1)^2 x^3\\
 &\quad +27(3 u-v-1) (4 u^3-6 u^2 v+2 u v^2+2 u v-v^2-3 w^2) x^2\\
 &\quad +162u^2 w (3 u-v-1)^2 x\\
 &\quad +108u^6+162 u^5 v+972 u^4 w^2-54 u^3 v^3-324 u^3 v w^2 -324u^5
        -324 u^4 v+189 u^3 v^2+243 u^3 w^2\\
 &\quad +81 u^2 v^3 +243u^2 v w^2-81 u v^4-486 u v^2 w^2-729 u w^4
        +27 v^5 +162v^3 w^2+243 v w^4 +54 u^3 v\\
 &\quad +81 u^2 v^2+243 u^2 w^2 -27v^4-162 v^2 w^2-243 w^4,\\
\end{split}
\end{equation}
\begin{equation*}
\begin{split}
H_b &= (48u^3-48 u^2 v+12 u v^2-12 u^2+12 u v-3 v^2-9 w^2) x^2\\
 &\quad +18w (8 u^3-4 u^2 v-4 u^2+v^2+3 w^2) x\\
 &\quad +108u^3 w^2-36 u^4-36 u^3 v+27 u^2 v^2+81 u^2 w^2 +18u v^3
        +54 u v w^2-9 v^4-54 v^2 w^2-81 w^4,\\
\lambda_b &= (4u^3-v^2-3 w^2) (3 u-v-1)^2
             /(4u^3-3 u v^2-9 u w^2+v^3+3 v w^2)^2,\\
G_b &= \Bigl( -3 (128 u^6-192 u^5 v+96 u^4 v^2-16 u^3 v^3-48 u^5 +72u^4 v
              -52 u^3 v^2-84 u^3 w^2+30 u^2 v^3\\ 
 &\quad\quad  +90u^2 v w^2-12 u v^4
              -36 u v^2 w^2+2 v^5+6 v^3 w^2 -8u^3 v+12 u^2 v^2+36 u^2 w^2\\
 &\quad\quad  -6 u v^3-18 u v w^2 +v^4-9w^4) x^3\\
 &\quad\quad -27w (64 u^6-64 u^5 v+16 u^4 v^2-40 u^5+24 u^4 v -2u^3 v^2
             -6 u^3 w^2+4 u^2 v^3+12 u^2 v w^2\\
 &\quad\quad -2 u v^4 -6u v^2 w^2+8 u^4
             -2 u^2 v^2-6 u^2 w^2-2 u v^3 -6u v w^2+v^4+6 v^2 w^2+9 w^4) x^2\\
 &\quad\quad +27(-96 u^6 w^2+48 u^5 v w^2+16 u^7+8 u^6 v-20 u^5 v^2+12u^5 w^2
             -2 u^4 v^3-6 u^4 v w^2+8 u^3 v^4 \\
 &\quad\quad +36u^3 v^2 w^2+36 u^3 w^4 -2 u^2 v^5-18 u^2 v^3 w^2 
             -36u^2 v w^4-8 u^5 v-4 u^4 v^2 -12 u^4 w^2 +10 u^3 v^3\\
 &\quad\quad +30u^3 v w^2+u^2 v^4-9 u^2 w^4-4 u v^5
             -24 u v^3 w^2 -36u v w^4+v^6+9 v^4 w^2+27 v^2 w^4+27 w^6) x\\
 &\quad\quad +27w (-48 u^6 w^2+24 u^7+24 u^6 v-18 u^5 v^2 -54u^5 w^2
             -12 u^4 v^3-36 u^4 v w^2+6 u^3 v^4\\ 
 &\quad\quad +42u^3 v^2 w^2+72 u^3 w^4
             +8 u^6-18 u^4 v^2-54 u^4 w^2 -2u^3 v^3-6 u^3 v w^2+9 u^2 v^4\\
 &\quad\quad +54 u^2 v^2 w^2+81u^2 w^4-v^6-9 v^4 w^2-27 v^2 w^4-27 w^6)\Bigr)\\
 &\quad\quad\quad  (3u-v-1)/(4 u^3-3 u v^2-9 u w^2+v^3+3 v w^2).
\end{split}
\end{equation*}

The locus $\sH_3\subset \sZ$ corresponding to curves with Jacobians with real multiplication by $\sqrt{3}$ over $k$ is given by the vanishing of
\[\begin{split}
h(u,v,w) &= 16 u^6-12 u^4 v^2-36 u^4 w^2+4 u^3 v^3+12 u^3 v w^2-48 u^4 v
            +48 u^3 v^2+96 u^3 w^2\\
     &\quad -12 u^2 v^3 -36 u^2 v w^2+16 u^3 v -12 u^2 v^2-36 u^2 w^2
            +v^4+6 v^2 w^2+9 w^4.
\end{split}
\]
We introduce notation to more concisely state the expressions in what follows. Writing $\zeta$ for a primitive cube root of unity,  we define some irreducible quadrics and quartics in $\ZZ[a,b,c]$ as follows.
\begin{equation}\label{E:discfactors}
\begin{aligned}
q_3&= Q(a,b,c)=a^2 +a(b-c) + (b-c)^2-3ac,\\
t_3&= Q(a,\zeta b,c)Q(a,{\zeta^2}b,c),\\
q_4&= Q(c,b,a),\\
t_4&= Q(c,\zeta b,a)Q(c,{\zeta^2}b,a).\\
\end{aligned}\end{equation}

\begin{prop}
The surface $\sH_3$ is birational to $\PP^2$ with parameters $(a:b:c)$ via the parametrization
\begin{equation}\label{eq:vwparam}
\begin{split}
u&=\frac{a}{c}\\
v&=
\frac{a (2 a^2 - a b - 8 a c - b^2 - b c + 2 c^2) (a^2 + a b - 4 a c - 2 b^2 + b c + c^2)}{ct_3}\\
w&=\frac{3ab(c-a)(a^2 - 4ac - b^2 + c^2)}{ct_3}
\end{split}
\end{equation}
\end{prop}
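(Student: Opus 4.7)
The strategy is to verify the given parametrization directly, in two steps: first, that the rational map lands in $\sH_3$; second, that it is birational.

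\textbf{Step 1: Image lies in $\sH_3$.} Substitute the expressions for $u, v, w$ from \eqref{eq:vwparam} into the defining polynomial $h(u,v,w)$ of $\sH_3$, and clear denominators by multiplying through by a suitable power of $c \cdot t_3$. One obtains a polynomial identity in $\Z[a,b,c]$, which can be verified mechanically in Magma or Sage (cf.\ \cite{BFScode}). The quartic $t_3 \in \Z[a,b,c]$ from \eqref{E:discfactors} is irreducible and vanishes only on a proper closed subset of $\PP^2$, so the map is well defined on a dense open subset.

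\textbf{Step 2: Birationality.} To show that the map $\PP^2 \to \sH_3$ is birational, I would construct an inverse. From $u = a/c$ one may set $c=1$ and $a=u$; then the formulas for $v$ and $w$ become rational in $b$ with common denominator $t_3(u,b,1)$, of degree $4$ and $3$ respectively in $b$. Eliminating this denominator yields two polynomial relations in $b$ with coefficients in $k(u,v,w)$, and their resultant (or a suitable $k(u,v,w)$-linear combination) recovers $b$ as a rational function of $u, v, w$. Composing with the original map and checking generic equality to the identity on $\PP^2$ then finishes the proof. Since $\sH_3$ is an irreducible surface (one checks $h$ is irreducible) and the map is non-constant, it is automatically dominant with finite generic fiber, so producing an inverse from even a single generic point is enough.

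\textbf{Main obstacle.} The polynomial identity in Step~1 has degree on the order of $36$ in $\Z[a,b,c]$ after clearing denominators, which is infeasible by hand but immediate in a computer algebra system; the conceptual content is really in the discovery of~\eqref{eq:vwparam}. A natural way to arrive at it is to observe that $h(u,v,w)$ is even in $w$, and to view $h=0$ as a quadratic equation in $W = w^2$. Its discriminant factors as $144\, u^3 (u-1)^2 \cdot [(u-4)v - 3u(u-3)] \cdot [v - (3u-1)]$, so $\sH_3$ is birational to the double cover of $\AA^2_{u,v}$ defined by $\eta^2 = u \cdot [(u-4)v - 3u(u-3)] \cdot [v - (3u-1)]$. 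This is a conic bundle over $\AA^1_u$ that admits the rational section $v = 3u-1$, $\eta = 0$, hence it is rational over $k$. Tracing this section through the conic bundle and changing variables yields a parametrization in the form~\eqref{eq:vwparam}, in close analogy with the proof of Proposition~\ref{L:paramg}.
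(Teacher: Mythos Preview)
Your proposal is correct and follows the same approach as the paper, whose proof is simply ``Direct computation.'' Your Step~1/Step~2 outline makes explicit what that computation entails, and your discussion of the conic-bundle structure in $w^2$ (with the discriminant factoring as you describe) nicely explains how one might discover \eqref{eq:vwparam}, in the spirit of the paper's proof of Proposition~\ref{L:paramg}; this is more than the paper itself offers.
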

\begin{proof} Direct computation.
\end{proof}
When we restrict $C_w$ to $\sH_3$, we find that the substitution
$X=\frac{6ab(a - c)}{c}x - 3\frac{a}{c}$ allows us to remove some common factors and simplify the formulas \eqref{eq:Cuvw}.  We have
\[\begin{aligned}
H_1&=b (a-c) q_4 x^2-q_4 x-1\\
\lambda_1&=4 a c t_3\\
G_1&=b (a - c)^2 (a^2 - 4 a c - b^2 + c^2) q_4^2 x^3
-(a - c) q_4 (a^4 - 4 a^3 b - 8 a^3 c + 3 a^2 b^2
+ 18 a^2 b c + 18 a^2 c^2\\
&+ 2 a b^3 - 9 a b^2 c - 12 a b c^2
- 8 a c^3 - 2 b^4 - b^3 c + 2 b c^3 + c^4) x^2\\
&-q_4 (2 a^3 - 3 a^2 b - 9 a^2 c + 9 a b c + 6 a c^2 + b^3 - c^3) x
-a (a^2 - 2 a b - 4 a c + b^2 + 7 b c + c^2)\\
H_2&=(a - c) (a^3 - 2 a^2 b - 5 a^2 c + a b^2 + 6 a b c + 5 a c^2 - b^2 c - b c^2 - c^3) x^2\\
&+(2 a^2 - 2 a b - 6 a c + 2 b c + c^2) x+1\\
\lambda_2&=-4 q_3 t_3\\
G_2&=(a - c)^2 (2 a^7 - 6 a^6 b - 26 a^6 c + 6 a^5 b^2 + 66 a^5 b c + 126 a^5 c^2 - 4 a^4 b^3
- 54 a^4 b^2 c - 255 a^4 b c^2 \\
&- 278 a^4 c^3 + 6 a^3 b^4 + 28 a^3 b^3 c + 156 a^3 b^2 c^2
+ 396 a^3 b c^3 + 278 a^3 c^4 - 6 a^2 b^5 - 30 a^2 b^4 c\\
&- 54 a^2 b^3 c^2 - 147 a^2 b^2 c^3
- 204 a^2 b c^4 - 126 a^2 c^5 + 2 a b^6 + 18 a b^5 c + 30 a b^4 c^2 + 26 a b^3 c^3\\
& + 18 a b^2 c^4
+ 42 a b c^5 + 26 a c^6 - 2 b^6 c - 3 b^5 c^2 + 3 b^4 c^3 + 4 b^3 c^4 + 3 b^2 c^5 - 3 b c^6 - 2 c^7) x^3
\\
&+3 (a - c) (2 a^6 - 4 a^5 b - 22 a^5 c + 2 a^4 b^2 + 36 a^4 b c + 85 a^4 c^2 - 2 a^3 b^3 - 14 a^3 b^2 c
- 104 a^3 b c^2\\
& - 132 a^3 c^3 + 4 a^2 b^4 + 10 a^2 b^3 c + 27 a^2 b^2 c^2 + 98 a^2 b c^3 + 68 a^2 c^4
- 2 a b^5 - 12 a b^4 c - 10 a b^3 c^2\\
&- 13 a b^2 c^3 - 12 a b c^4 - 14 a c^5 + 2 b^5 c + 2 b^4 c^2
- b^3 c^3 - 2 b^2 c^4 - 2 b c^5 + c^6) x^2\\
&+3 (2 a^5 - 2 a^4 b - 18 a^4 c + 14 a^3 b c + 52 a^3 c^2 - 2 a^2 b^3 - 27 a^2 b c^2 - 49 a^2 c^3
+ 2 a b^4 + 6 a b^3 c\\
& + 13 a b c^3 + 6 a c^4 - 2 b^4 c - b^3 c^2 + 2 b c^4 + c^5) x+
(2 a^3 - 12 a^2 c + 15 a c^2 - 2 b^3 + 2 c^3).
\end{aligned}
\]
We obtain $f_{a,b,c}(x)\in\ZZ[a,b,c][x]$ such that
\begin{equation}\label{E:f_abc}\begin{aligned}
G_1^2+\lambda_1H_1^3&=q_4^2f_{a,b,c}\\
-3(G_2^2+\lambda_2H_2^3)&=(3c)^4f_{a,b,c},
\end{aligned}\end{equation}
and we see that over an open part of $\sH_3$, the universal curve realizing the moduli interpretation is given by
\begin{equation}\label{E:Cabc}
C_{a,b,c}\colon y^2=f_{a,b,c}(x).
\end{equation}
If $\lambda$ is a scalar, then $f_{\lambda a,\lambda b,\lambda c}(x) =\lambda^2f_{a,b,c}(\lambda^2x)$, so the isomorphism class of $C_{a,b,c}$ indeed depends only on $(a \colon b \colon c) \in \PP^2(\Q)$.  
By direct computation we obtain
\begin{equation}\label{E:fdisc}
\disc_x(f_{a,b,c})=2^{12}a^3b^9c^{14}(a-c)^{12}q_3^3t_3^3q_4^4t_4^4.
\end{equation}
In the next section we link the possible reduction types of $C_{a,b,c}$ to the vanishing of the various factors of the discriminant. The following results explain some of the symmetries apparent in the discriminant.

\begin{prop}\label{P:b-isom}
	We have
	\[\C_{a,b,c}\simeq C_{a,\zeta b, c}\]
\end{prop}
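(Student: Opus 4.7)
The plan is to prove the proposition by exhibiting an explicit isomorphism $C_{abc} \simeq C_{a,\zeta b,c}$ over $k(\zeta)$, guided by the moduli-theoretic meaning of the transformation $b \mapsto \zeta b$.

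I would begin with the moduli interpretation: a point $(a:b:c) \in \PP^2 \setminus \Delta$ encodes a triple $(J_{abc}, \iota, \epsilon)$, where $\iota \colon \Z[\sqrt 3] \hookrightarrow \End(J_{abc})$ and $\epsilon \colon \Z/3\Z \times \mu_3 \simeq J_{abc}[\sqrt 3]$. After base change to $k(\zeta)$, multiplication by $\zeta$ defines an automorphism $m_\zeta$ of $\mu_3$. The substitution $b \mapsto \zeta b$ should correspond to precomposing $\epsilon$ with the automorphism $\mathrm{id}_{\Z/3\Z} \times m_\zeta$ of $\Z/3\Z \times \mu_3$, leaving the underlying pair $(J,\iota)$ unchanged. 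Since a smooth genus $2$ curve is determined up to isomorphism by its principally polarized Jacobian (Torelli), this already gives $C_{abc} \simeq C_{a,\zeta b,c}$ over $k(\zeta)$, independently of the explicit level-structure bookkeeping.

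To make this concrete, I would then exhibit the isomorphism directly. Since $f_{abc}$ is a sextic, the isomorphism has the form $(x,y) \mapsto \left(\frac{\alpha x + \beta}{\gamma x + \delta}, \frac{\mu y}{(\gamma x + \delta)^3}\right)$ with $(\alpha,\beta,\gamma,\delta) \in \GL_2(k(\zeta))$ and $\mu \in k(\zeta)^\times$. The moduli-theoretic discussion gives two constraints that pin the substitution down up to scalar: (i) the marked $\Z/3\Z$-subgroup $\langle D_1\rangle \subset J_{abc}$ (cut out by $H_1$) must be sent to the corresponding subgroup of $J_{a,\zeta b,c}$, and (ii) the marked $\mu_3$-subgroup $\langle D_2\rangle$ (cut out by $H_2$) must be sent to the corresponding subgroup of $J_{a,\zeta b,c}$ twisted by $m_\zeta$. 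Combined with the requirement that the transformation carries $f_{abc}(x)$ to a scalar multiple of $(\gamma x + \delta)^6 f_{a,\zeta b,c}\!\left(\frac{\alpha x + \beta}{\gamma x + \delta}\right)$, these conditions determine $(\alpha,\beta,\gamma,\delta,\mu)$ completely.

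The main obstacle is the computational bulk: the explicit polynomials $H_1,H_2,G_1,G_2$, and especially $f_{abc}$, are sizable, so even after making an informed ansatz one is unlikely to guess the substitution by hand. In practice I would carry out the whole verification in Magma or Sage: compute $f_{a,\zeta b,c}$ and the candidate M\"obius transformation from the two constraints above, then check the polynomial identity $f_{abc}(x) = \mu^2 (\gamma x + \delta)^6 f_{a,\zeta b,c}\!\left(\frac{\alpha x + \beta}{\gamma x + \delta}\right)$ in $\Z[a,b,c,\zeta][x]$. Since both sides are polynomial in the generic parameters, a single such symbolic identity settles the proposition.
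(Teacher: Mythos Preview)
Your plan ultimately works, because the explicit symbolic verification you describe at the end is a self-contained proof once carried out. But the moduli-theoretic paragraph preceding it is not a proof on its own: you assert that $b \mapsto \zeta b$ ``should correspond'' to precomposing $\epsilon$ with $\mathrm{id}\times m_\zeta$, and then conclude via Torelli that the underlying curve is unchanged. Nothing in the setup tells you \emph{a priori} that this particular coordinate change on $\PP^2$ fixes the underlying polarized abelian surface rather than moving to a genuinely different one---that is exactly the content of the proposition. So the Torelli step is circular unless supported by the computation, and the computation is then doing all the work.

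The paper closes this gap differently, and more structurally. It uses that the ambient moduli space $\sX$ (of which $\sH_3$ is a sublocus, via $\sZ$) carries a known modular automorphism group $S_4 \simeq \PGL_2(\FF_3)$, acting by permutation of the coordinates $x_1,\dots,x_4$; by construction this action permutes level structures on a \emph{fixed} curve. This $S_4$ action transfers to $\sZ$ and hence to $\sH_3$ via the explicit coordinate changes already recorded earlier in the paper. The only computation left is to check that $(a:b:c)\mapsto(a:\zeta b:c)$ is induced by an element of this $S_4$---a much lighter calculation than matching two generic sextics under a M\"obius transformation. Your approach trades this structural input for a heavier but more self-contained verification; both reach the goal, but the paper's route explains \emph{why} the order-$3$ symmetry in $b$ is there rather than merely confirming it, and avoids having to solve for and certify an explicit isomorphism of curves.
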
 
\begin{proof}
We know that $\sX$ has as modular automorphism group $S_4\simeq \PGL_2(\FF_3)$ acting by permutation of $x_1,x_2,x_3,x_4$. By \eqref{E:z_in_x} this gives us the action of $S_4$ on $\sZ$ and therefore on $\sH_3$. From there it is direct computation to check that the map $(a:b:c)\mapsto (a:\zeta b:c)$ is induced by that action.
\end{proof}

Let $J_{a,b,c}$ be the Jacobian of the curve $C_{a,b,c}$. 
\begin{prop}\label{P:richelot}
	The Jacobians $J_{a,b,c}$ and $J_{c,b,a}$ are Richelot-isogenous. As unpolarized abelian surfaces they are isomorphic.
\end{prop}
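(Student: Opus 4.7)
The plan is to leverage the moduli interpretation of Theorem~\ref{model} for the unpolarized part, and to appeal to Richelot's explicit recipe for the isogeny part. First I would identify the involution $(a:b:c)\mapsto(c:b:a)$ on $\sH_3$ with the Atkin--Lehner-type involution exchanging the two $j$-planes $w_0=w_2=0$ and $w_0=w_3=0$ used in Section~\ref{sec:multsqrt3} to construct $C_w$; equivalently, with the involution exchanging the subgroup schemes $\langle D_1\rangle\simeq\ZZ/3\ZZ$ and $\langle D_2\rangle\simeq\mu_3$ inside $J_{abc}[\sqrt{3}]$. This amounts to tracing through the parametrization~\eqref{eq:vwparam} and the twist~\eqref{E:z_in_x}, in the manner of the proof of Proposition~\ref{P:b-isom}, but starting from the $S_4$-element swapping $x_2$ and $x_3$ rather than the diagonal $\zeta$-action. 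Once this identification is in place, both $C_{abc}$ and $C_{cba}$ arise from the $C_w$-construction applied to the same point of $\sW$; their Jacobians are then two different principal polarizations on a common underlying abelian surface $A$, yielding the desired isomorphism $J_{abc}\simeq J_{cba}$ of unpolarized abelian surfaces.

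For the Richelot-isogenous claim, I would construct an actual $(2,2)$-isogeny $\phi\colon J_{abc}\to J_{cba}$ satisfying $\phi^\ast\theta_{cba}=2\theta_{abc}$. The approach is via the standard Richelot recipe: seek a factorization $f_{abc}=L_1L_2L_3$ of the defining sextic into three quadratic factors (possibly after base change), and verify that the dual curve defined by the Jacobian quadratics $L_j'L_k-L_k'L_j$ is isomorphic to $C_{cba}$ after rescaling. The marked level structure on $C_{abc}$ distinguishes a natural grouping of the six roots of $f_{abc}$ into three pairs -- namely the roots of $H_1$, the roots of $H_2$, and the remaining two Weierstrass points -- giving a canonical candidate factorization to test.

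The main obstacle will be this Richelot step: producing the factorization explicitly in $\ZZ[a,b,c][x]$ and matching the resulting dual with $C_{cba}$ is a finite but unwieldy symbolic computation given the size of $f_{abc}$. If the direct verification proves intractable, a cleaner alternative is to work intrinsically with the N\'eron--Severi lattice of $A$: using $\End(A)\supseteq\ZZ[\sqrt{3}]$, compute the sublattice spanned by the two principal polarization classes via the action of $\sqrt{3}$ and check that their difference is twice a primitive class, from which the existence of a Richelot isogeny defined over $k$ follows. Either way, no new conceptual input beyond the machinery of Sections~\ref{S:AbelianSurfacesWithZ3Z3}--\ref{sec:multsqrt3} should be required, and a computer-algebra check on a generic fiber extends by continuity to all of $\sH_3$.
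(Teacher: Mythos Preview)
Your proposal has two genuine gaps.

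First, the proposed Richelot factorization does not make sense: the roots of $H_1$ and $H_2$ are \emph{not} roots of $f_{abc}$. From \eqref{E:f_abc} we have $q_4^2 f_{abc}=G_1^2+\lambda_1 H_1^3$, so at a root $x_0$ of $H_1$ one gets $f_{abc}(x_0)=q_4^{-2}G_1(x_0)^2$, which is generically nonzero. The points $(x_0,y_0)$ in the support of $D_1$ and $D_2$ are not Weierstrass points, so they cannot be used to pair up the six Weierstrass points. Thus your ``canonical candidate factorization'' simply does not exist, and there is no obvious way to extract a $2$-level datum directly from the $\sqrt{3}$-level structure.

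Second, the moduli-theoretic route to the unpolarized isomorphism would, if it worked, prove too much. The $S_4$-action on $\sZ$ (and hence on $\sH_3$) consists of \emph{modular} automorphisms: it permutes the level-structure markings while fixing the underlying principally polarized abelian surface. If $(a:b:c)\mapsto(c:b:a)$ were induced by the transposition of $x_2$ and $x_3$, then $C_{abc}$ and $C_{cba}$ would be isomorphic as \emph{curves}, not merely have isomorphic unpolarized Jacobians. But generically they are not: the remark immediately following the proposition notes that $J_{abc}$ carries two inequivalent principal polarizations, and indeed an isomorphism of polarized surfaces would force $(1+\sqrt{3})^*\theta=2\theta$, i.e.\ $\sqrt{2}\in\End(J_{abc})$, contradicting $\End(J_{abc})=\ZZ[\sqrt{3}]$ generically. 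So your first step cannot succeed as stated, and your alternative N\'eron--Severi argument presupposes exactly the unpolarized isomorphism it is meant to establish.

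The paper's approach avoids both difficulties by working directly with the endomorphism $1+\sqrt{3}$. Since $(1+\sqrt{3})(\sqrt{3}-1)=2$, the kernel $J_{abc}[1+\sqrt{3}]$ sits inside $J_{abc}[2]$ with order $4$; self-adjointness of the real multiplication for the Weil pairing makes this kernel maximal isotropic, so $J_{abc}\to J_{abc}/J_{abc}[1+\sqrt{3}]$ is a polarized $(2,2)$-isogeny. The target is identified with $J_{cba}$ via the explicit Richelot formulas of \cite{BruinDoerksen2011}, using the Galois structure of $f_{abc}$ to pin down the correct quadratic splitting. The unpolarized isomorphism is then immediate: $1+\sqrt{3}$ is an \emph{endomorphism}, so $J_{abc}/\ker(1+\sqrt{3})\simeq J_{abc}$ as abelian varieties. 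The key insight you are missing is that the $(2,2)$-kernel comes from the ring $\ZZ[\sqrt{3}]$ itself, not from the $3$-torsion data.
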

\begin{proof}
	Since $\End(J_{a,b,c})=\ZZ[\sqrt{3}]$, and $(1+\sqrt{3})(1-\sqrt{3})=-2$, we see $J_{a,b,c}$ admits a $(2,2)$-endormophism $(1 + \sqrt{3})$.  Moreover, since $(1 + \sqrt{3})^2 = (2)$ and the real multiplications  are self-adjoint with respect to the Weil pairing, the kernel is maximal isotropic in $J_{a,b,c}[2]$.  Hence $J_{a,b,c}$ admits a polarized $(2,2)$-isogeny.    By \cite{BruinDoerksen2011}*{Lemma~4.1}, we can obtain a  description of the kernel of the generically unique isogeny by the Galois theory of $f_{a,b,c}$ and by \cite{BruinDoerksen2011}*{Proposition~4.3} we can express the codomain as the Jacobian of a genus 2 curve, which is readily checked to be isomorphic to $C_{c,b,a}$.  Since $1 + \sqrt{3}$ is an endomorphism, we have $J_{c,b,a} \simeq J_{a,b,c}$ as abelian varieties.   
\end{proof}

We see that $J_{a,b,c}$ admits at least two principal polarizations.  If $J_{a,b,c}$ is simple, these are the only ones, up to $\Aut(J_{a,b,c})$-equivalence, by \cite[Cor.\ 2.12]{GGR}.

The final symmetry is less obvious, but comes from the fact that the quadratic twist $J_{\minus3}$ {\it also} has $\Z[\sqrt{3}]$-multiplication and full $\sqrt{3}$-level structure.
\begin{prop}\label{twist}
The $(\minus 3)$-quadratic twist of $C_{a,b,c}$ is isomorphic to 
$C_{q_3,3b(b+a)-q_3,q_4}$ .
\end{prop}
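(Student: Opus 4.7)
The plan is to exploit the alternative presentation of $f_{abc}$ provided by \eqref{E:f_abc}. The equation $\minus3\,y^2=f_{abc}(x)$ cutting out $C_{abc}^{(\minus3)}$ can be rewritten, after rescaling $y$, as $Y^{2}=G_2(x)^2+\lambda_2 H_2(x)^3$. Thus the twist has the \emph{same shape} as $C_{abc}$ itself, but with the roles of the ``first'' and ``second'' $j$-plane data $(G_1,H_1,\lambda_1)$ and $(G_2,H_2,\lambda_2)$ interchanged. Geometrically this is exactly the effect of the $(\minus3)$-twist on the level structure: the Galois modules $\ZZ/3\ZZ$ and $\mu_3$ are swapped by $\chi_{\minus3}$, which over $\QQ$ is the mod-$3$ cyclotomic character. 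So twisting by $\minus3$ exchanges the cyclic subgroups $\langle D_1\rangle$ and $\langle D_2\rangle$ of $J_{abc}[\sqrt{3}]$, and hence exchanges the pairs of data that encode them.

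Given this heuristic, twisting by $\minus3$ should be realized by a birational involution $\tau$ of $\sH_3$, and the proposition amounts to the identification $\tau(a:b:c)=(q_3:3b(b+a)-q_3:q_4)$. To produce $\tau$ explicitly, I would look for constants $\alpha,\beta,\mu\in\QQ(a,b,c)^{\times}$ and a triple $(a',b',c')$ so that under the substitution $X\mapsto\alpha X+\beta$ the quadratic $H_1'$ attached to $(a',b',c')$ matches $H_2$ up to a scalar $c_H$, the cubic $G_1'$ matches $G_2$ up to a scalar $c_G$, and the compatibility $c_H^{3}\lambda_1'=c_G^{2}\lambda_2$ holds. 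Matching leading and constant coefficients yields a system of polynomial equations in $(a',b',c')$ whose solution in $\PP^{2}$ is the stated triple $(q_3:3b(b+a)-q_3:q_4)$.

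The remainder of the argument is a verification: once $(a',b',c')$ and $(\alpha,\beta,\mu)$ are pinned down, the desired isomorphism reduces to the single polynomial identity
\[
f_{a',b',c'}(\alpha X+\beta)\;=\;\minus3\,\mu^{2}\,f_{abc}(X)
\]
in $\QQ(a,b,c)[X]$. This is the main, purely computational, obstacle: the polynomials $f_{abc}$ and $f_{a',b',c'}$ are large in $(a,b,c)$, so the identity is infeasible to check by hand but is a routine symbolic verification in Magma or sage, analogous to the checks invoked in the proofs of Propositions~\ref{P:b-isom} and~\ref{P:richelot}.
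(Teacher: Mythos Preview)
Your proposal is correct and aligns with the paper's own proof, which is simply ``Direct computation.'' You supply more strategy and motivation than the paper does---in particular the observation that $\chi_{\minus3}$ swaps the $\ZZ/3\ZZ$ and $\mu_3$ pieces of the level structure, hence exchanges the two $j$-plane data---but the actual verification you describe (matching $(G_1',H_1',\lambda_1')$ for a candidate $(a',b',c')$ against $(G_2,H_2,\lambda_2)$ up to a M\"obius change of $X$ and scalars, then checking the resulting polynomial identity in $\QQ(a,b,c)[X]$ by machine) is exactly the content of the paper's two-word proof.
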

\begin{proof}
The relevant involution on $\sZ$ is $(z_0:z_1:z_2:z_3:z_4)\mapsto(z_0:z_1:z_3:z_2:z_4)$, which is readily transformed into a birational involution on $(u,v,w)$. By direct computation one checks that the given birational involution on $(a:b:c)$ is compatible with that.
\end{proof}
\begin{notation}
	We also consider quadratic twists of $C_{a,b,c}$ and $J_{a,b,c}$, which we denote by $C_{a,b,c;d}$ and $J_{a,b,c;d}$. If $a,b,c$ are clear from the context, we suppress them in our notation and write $C_d=C_{a,b,c;d}$ and $J_d=J_{a,b,c;d}$. 
	
	We write $A_{a,b,c}=J_{a,b,c}/\langle D_1\rangle$ and $B_{a,b,c}=J_{a,b,c}/\langle D_2\rangle$. These abelian surfaces are generally not principally polarized.
	We also consider their quadratic twists $A_d=A_{a,b,c;d}$ and $B_d=B_{a,b,c;d}$.
\end{notation}

\begin{cor}\label{AtwistB}
We have $A_{\minus3} \simeq B_{q_3,3b(b+a)- q_3,q_4}$ and $B_{\minus3} \simeq A_{q_3,3b(b+a)- q_3,q_4}$.  
\end{cor}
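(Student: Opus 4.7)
My plan is to deduce this from Proposition~\ref{twist} together with the observation that quadratic twisting by $-3$ interchanges the Galois modules $\Z/3\Z$ and $\mu_3$, and that formation of quotients commutes with quadratic twisting. Since the real-multiplication $\Z[\sqrt 3]\hookrightarrow \End(J_{abc})$ is defined over $\Q$, it is preserved under quadratic twisting, so $J_{abc,\minus 3}$ carries $\sqrt{3}$-multiplication together with a full $\sqrt{3}$-level structure. Proposition~\ref{twist} then provides an isomorphism of principally polarized abelian surfaces with $\sqrt{3}$-multiplication
\[J_{abc,\minus 3}\;\simeq\;J_{q_3,\,3b(b+a)-q_3,\,q_4},\]
and by the moduli interpretation of Theorem~\ref{model} this isomorphism must identify the canonical $\Z/3\Z$ and $\mu_3$ subschemes on the two sides.

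Next I use the fact that for any rational subgroup scheme $\krn\subset J_{abc}[\sqrt 3]$ we have $(J_{abc}/\krn)_{\minus3}\simeq J_{abc,\minus3}/\krn_{\minus3}$, where $\krn_{\minus 3}$ denotes the quadratic twist of $\krn$ by $\chi_{\minus3}$. The crucial observation is that twisting by $\chi_{\minus3}$ interchanges the étale group scheme $\Z/3\Z$ and the multiplicative group scheme $\mu_3$, since these are $\chi_{\minus3}$-twists of each other (this is just the statement $\Q(\mu_3)=\Q(\sqrt{\minus3})$). Applying this with $\krn=\langle D_1\rangle\simeq\Z/3\Z$, the twist $\langle D_1\rangle_{\minus3}$ becomes a $\mu_3$-subgroup of $J_{abc,\minus3}$, which under the isomorphism above is identified with the canonical $\mu_3\simeq \langle D_2\rangle$ in $J_{q_3,3b(b+a)-q_3,q_4}$. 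Putting the pieces together,
\[
A_{abc,\minus3}\;=\;\bigl(J_{abc}/\langle D_1\rangle\bigr)_{\minus3}\;\simeq\;J_{abc,\minus3}/\langle D_1\rangle_{\minus3}\;\simeq\;J_{q_3,3b(b+a)-q_3,q_4}/\langle D_2\rangle\;=\;B_{q_3,3b(b+a)-q_3,q_4},
\]
and the second claimed isomorphism follows by the symmetric argument with $D_1$ and $D_2$ interchanged.

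The only nontrivial point is the compatibility of the isomorphism in Proposition~\ref{twist} with the marked subgroups $\langle D_1\rangle$ and $\langle D_2\rangle$. I expect this to be the main obstacle, although it should not cause serious difficulty. The cleanest route is to invoke Theorem~\ref{model}, which asserts that over $\PP^2\setminus\Delta$ the assignment $(a\colon b\colon c)\mapsto J_{abc}$ is the universal family for the moduli problem of principally polarized abelian surfaces with $\sqrt 3$-multiplication and full $\sqrt 3$-level structure, so any $\Q$-isomorphism of such surfaces is forced to respect the level structure. If a direct verification is preferred, one can instead track the roots of $H_1,H_2$ (together with the $y$-values $G_1(x_i)$ and $G_2(x_i)/\sqrt{\minus 3}$) under the explicit isomorphism witnessing Proposition~\ref{twist} and check by direct substitution that the roots of $H_1$ on one side are sent to the roots of $H_2$ on the other.
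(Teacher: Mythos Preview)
Your argument is correct and is essentially the paper's own proof, only spelled out in more detail. The paper compresses everything into the phrase ``$A$ is the quotient of $J$ by its rational $\sqrt 3$-torsion point,'' which already contains the resolution of your compatibility concern: over $\Q$ the group scheme $J[\sqrt 3]\simeq\Z/3\Z\times\mu_3$ has a \emph{unique} subgroup isomorphic to $\Z/3\Z$ (and a unique one isomorphic to $\mu_3$), since $\Z/3\Z\not\simeq\mu_3$ over $\Q$; hence any $\Q$-isomorphism of the Jacobians respecting the $\sqrt 3$-multiplication is automatically forced to match these subgroups. Invoking Theorem~\ref{model} or an explicit check on $H_1,H_2$ is therefore unnecessary.
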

\begin{proof}
Indeed, $A$ is the quotient of $J$ by its rational $\sqrt{3}$-torsion point.  The same can be said about the $(\minus3)$-quadratic twist of $B_{q_3,3b(b+a) - q_3,q_4}$, by Proposition \ref{twist}.   
\end{proof}

\section{Reduction types}\label{reduction types}
Let $k$ be a complete discretely valued field of residue characteristic different from $2,3$, with valuation $v\colon k^\times\to \ZZ$, assumed to be normalized, i.e., surjective.
We write $\sO_k$ for the associated complete local ring with maximal ideal $\fp$ and choose a uniformizer $\pi\in\fp$ with $v(\pi)=1$. We write $\kred=\sO_k/\fp$ for the associated residue field.

In this section we determine some of the possible \emph{reduction types} of the curve $C_{a,b,c}$ with $(a:b:c)\in\PP^2(k)$. We take a representative of the point with $a,b,c\in\sO_k$, where $\sO_k$ is the valuation ring in $k$, such that at least one of $a,b,c\in\sO_k^\times$. We assume that the discriminant of $C_{a,b,c}$ does not vanish, so it describes a smooth curve of genus $2$ over $k$. We extend $C_{a,b,c}$ to a model over $\sO_k$ and look at the special fiber.

\subsection{Discriminant factors}\label{subsec:discfactors}
Recall from (\ref{E:discfactors}, \ref{E:fdisc}) that the discriminant of $C_{a,b,c}$ has factors 
\[a,b,c,a-c,q_3, q_4, t_3,t_4.\]
We refer to each of those as \emph{discriminant factors}. Let $h$ be a discriminant factor. We say that $C_{a,b,c}$ has \emph{non-degenerate $h$-reduction} if $v(h(a,b,c))>0$ and $v(h'(a,b,c)) = 0$ for all other discriminant factors $h' \neq h$.
\begin{lemma}\label{L:disc_sing}
The points $(a \colon b \colon c)$ where two or more discriminant factors vanish are
\[
\begin{gathered}
\big\{ (1 : 2 : 1), (1 : 2\zeta : 1), (1:2\zeta^2:1),
	(1 : -1 : 1), (1:-\zeta:1), (1:-\zeta^2:1),\\
	(1:1:0),(1:\zeta:0),(1:\zeta^2:0),
	(0:1:1),(0:\zeta:1),(0:\zeta^2:1),\\
	  (1:0:0),(0 : 1 : 0), (0 : 0 : 1), (1:0:1), (2+\sqrt{3}:0:1), (2-\sqrt{3}:0:1)\big\}
\end{gathered}
\]
\end{lemma}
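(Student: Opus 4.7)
The plan is to verify the lemma by a direct pairwise intersection computation in $\PP^2$ for the eight discriminant factors $a,b,c,a-c,q_3,q_4,t_3,t_4$, exploiting two symmetries to keep the case analysis manageable. Proposition~\ref{P:b-isom} gives the $\ZZ/3$-action $(a:b:c)\mapsto(a:\zeta b:c)$, which fixes $a,b,c,a-c$ and cyclically permutes the irreducible components of $q_3t_3$ (and likewise those of $q_4t_4$); Proposition~\ref{P:richelot} yields the involution $(a:b:c)\mapsto(c:b:a)$, which swaps $a\leftrightarrow c$, $q_3\leftrightarrow q_4$, and $t_3\leftrightarrow t_4$. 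Under the group they generate, the $18$ advertised points fall into just a handful of orbits, so it suffices to examine one representative pair of factors per orbit.

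First I would dispatch the intersections of two linear factors from $\{a,b,c,a-c\}$, which immediately yield $(1:0:0),(0:1:0),(0:0:1),(1:0:1)$. For an intersection of a linear factor with one of the quadratics or quartics, the plan is to substitute and solve a one-variable equation: for instance, $q_3(0,b,c)=(b-c)^2$ and $t_3(0,b,c)=(\zeta b-c)^2(\zeta^2 b-c)^2$ produce the three points $(0:\zeta^i:1)$, and applying $a\leftrightarrow c$ gives $(1:\zeta^i:0)$. The remaining linear-vs-nonlinear pairs are handled the same way by substitution.

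For intersections among $\{q_3,q_4,t_3,t_4\}$, the key identity I will use is $q_3-q_4=3b(a-c)$, which is immediate from the definitions. Thus $q_3=q_4=0$ forces $b=0$ or $a=c$; substituting each case into $q_3$ yields $a^2-4ac+c^2=0$ (giving $(2\pm\sqrt{3}:0:1)$) or $-2a^2-ab+b^2=(b-2a)(b+a)$ (giving $(1:2:1)$ and $(1:-1:1)$). For the ``mixed'' pairs such as $\{q_3,t_3\}$, I would factor $t_3=Q(a,\zeta b,c)\,Q(a,\zeta^2 b,c)$ and repeat the calculation on each factor, using relations of the form $q_3-Q(a,\zeta^k b,c)=b(1-\zeta^k)(a+(1+\zeta^k)b-2c)$ to reduce each case to a one-parameter substitution that lands back in the list. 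The pairs $\{t_3,t_4\}$, $\{q_3,t_4\}$ and $\{q_4,t_3\}$ are then just the $\ZZ/3$-translates of $\{q_3,q_4\}$ and $\{q_3,q_3\circ(b\mapsto\zeta b)\}$-type situations, filling out the orbits containing $(1:2\zeta^i:1)$ and $(1:-\zeta^i:1)$; the points $(2\pm\sqrt{3}:0:1)$ are fixed by $b\mapsto\zeta b$ and so contribute no new orbit members.

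The hard part will be purely organizational: ensuring every pair of the eight factors has been covered and that no intersection produces an extraneous point outside the listed $18$. Each individual computation is an elementary substitution or a short factorization, easily double-checked in a computer algebra system, so the mathematical content is light once the symmetry reduction is in place.
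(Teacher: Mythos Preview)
Your proposal is correct and amounts to the same thing the paper does: the paper's proof reads in its entirety ``Direct computation.'' Your use of the $(a:b:c)\mapsto(a:\zeta b:c)$ and $(a:b:c)\mapsto(c:b:a)$ symmetries (which can be read off directly from the formulas in \eqref{E:discfactors}, independently of Propositions~\ref{P:b-isom} and~\ref{P:richelot}) is a sensible way to organize that computation, and the key identity $q_3-q_4=3b(a-c)$ is exactly the kind of shortcut one would use in practice.
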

\begin{proof}
Direct computation.
\end{proof}
\begin{rmk}
These 18 points all lie on the locus $abc(a-c) = 0$. In other words, if two of the higher degree discriminant factors vanish, then so does a linear factor. 
\end{rmk}

As long as $(a\colon b \colon c)$ does not reduce to one of the points in Lemma~\ref{L:disc_sing}, then $C_{a,b,c}$ has either good or non-degenerate $h$-reduction, for some $h$. In the latter case, we will show that the reduction type of $C_{a,b,c}$ can then be classified by the integer $m=v(h(a,b,c))$.

\subsection{Non-degenerate reduction}

Recall that $C/k$ is said to have \emph{semistable reduction} if there is a regular model $\mathcal{C}$ for $C$, flat and proper over $\sO_k$, such that the special fiber is a reduced normal crossing divisor in $\mathcal{C}$.  In other words, the singularities on the special fiber are ordinary double points.  Semistable reduction of $C$ over $k$ implies semistable reduction of the Jacobian $J = J(C)$ over $k$.  If $\mathcal{J} \to \Spec\sO_k$ is the N\'eron model of $J$, this means that the connected component $(\mathcal{J} \otimes \kred)^0$ of the special fiber is an extension of an abelian variety by a torus.  

For abelian surfaces with real multiplication, there are only two options for $(\mathcal{J} \otimes \kred)^0$: it is either an abelian variety (if $J$ has good reduction) or it is a torus of rank two.  In the latter case, we say $J$ has \emph{toric reduction}.
\begin{lemma}\label{torus classification}
Let $k$ be a field of characteristic different from $2,3$. Then the rank two tori with real multiplication by $\ZZ[\sqrt{3}]$ are classified by $\cH^1(k,\mu_2)=k^\times/k ^{\times 2}$. The associated class is called the \emph{torus character}.
\end{lemma}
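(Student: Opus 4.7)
The plan is to pass to the character lattice, which gives a contravariant equivalence between rank two tori over $k$ and free $\ZZ$-modules of rank two equipped with a continuous action of $G_k := \Gal(k^\sep/k)$. Under this equivalence, a $k$-rational embedding $\ZZ[\sqrt{3}] \hookrightarrow \End(T)$ corresponds to a $\ZZ[\sqrt{3}]$-module structure on $X^*(T)$ for which the Galois action is $\ZZ[\sqrt{3}]$-linear (because the real multiplication, being defined over $k$, must commute with the Galois action).

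Next I would trivialize the underlying $\ZZ[\sqrt{3}]$-module. Because $\ZZ[\sqrt{3}]$ is a principal ideal domain (it is the ring of integers of $\QQ(\sqrt{3})$, which has class number one), any finitely generated torsion-free $\ZZ[\sqrt{3}]$-module of $\ZZ$-rank two is free of rank one. Choosing an isomorphism $X^*(T) \simeq \ZZ[\sqrt{3}]$, the Galois action is encoded in a continuous homomorphism $\chi \colon G_k \to \Aut_{\ZZ[\sqrt{3}]}(\ZZ[\sqrt{3}]) = \ZZ[\sqrt{3}]^\times$. Two such trivializations differ by multiplication by a unit of $\ZZ[\sqrt{3}]^\times$; since this group is abelian, the class $[\chi] \in \cH^1(k, \ZZ[\sqrt{3}]^\times)$ is independent of the choice and determines $(T, \iota)$ up to isomorphism.

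Finally I would invoke Dirichlet's unit theorem: the fundamental unit of $\ZZ[\sqrt{3}]$ is $2+\sqrt{3}$, so as a topological group $\ZZ[\sqrt{3}]^\times \simeq \mu_2 \times \ZZ$, equipped with the discrete topology. Because $G_k$ is profinite, the image of any continuous $\chi$ is finite and therefore lies in the torsion subgroup $\mu_2$. Thus $\chi$ factors through a continuous homomorphism $G_k \to \mu_2$; since $\mathrm{char}\,k \neq 2$, the Galois module $\mu_2$ has trivial action, and Kummer theory identifies $\cH^1(k,\mu_2)$ with $k^\times/k^{\times 2}$, yielding the desired classification by the torus character.

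The argument is essentially Galois descent applied to the automorphism group of a rank one $\ZZ[\sqrt{3}]$-module, so I do not anticipate a substantive obstacle. The only point that needs care is verifying that the class $[\chi]$ is independent of the trivialization $X^*(T) \simeq \ZZ[\sqrt{3}]$; this is the standard fact that $\cH^1$ with abelian coefficients classifies twists, and here it simplifies further because the $G_k$-action on $\ZZ[\sqrt{3}]^\times$ is trivial, so cocycles are literally homomorphisms.
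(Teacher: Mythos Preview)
Your proof is correct and follows essentially the same route as the paper: pass to the (co)character lattice to obtain a rank-one $\ZZ[\sqrt{3}]$-module with $\ZZ[\sqrt{3}]$-linear Galois action, use class number one to trivialize, and then observe that continuous homomorphisms from the profinite $G_k$ into $\ZZ[\sqrt{3}]^\times \simeq \mu_2 \times \ZZ$ land in the torsion part $\mu_2$, giving the classification by $k^\times/k^{\times 2}$ via Kummer theory. Your write-up is more explicit about why only the torsion of the unit group contributes, but the argument is the same.
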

\begin{proof} 
Rank two tori over $k$ with an action by $\Z[\sqrt{3}]$ are parameterized by $\Z[\sqrt3]$-modules of rank 1 endowed with a continuous action of $\Gal(k^\sep/k)$.  Since $\Z[\sqrt{3}]$ has class number 1, all such modules are twists of $\Z[\sqrt{3}]$. The torsion part of $\Aut(\Z[\sqrt{3}])=\Z[\sqrt{3}]^\times=\ZZ\times\mu_2$ is $\mu_2$, so these twists are classified by $\cH^1(k,\mu_2)$, which by Kummer theory is just $k^\times/ k^{\times2}$.
\end{proof}
 
If $J$ has toric reduction at $p$ and $(\mathcal{J} \otimes \kred)^0 \simeq  \mathbb{G}_\mathrm{m}^2$, we say $J$ has \emph{split toric reduction}. Otherwise, it has \emph{non-split toric reduction}.
The torus character can be read off from the tangent cone of one of the nodes of $C$; the real multiplication forces the quadratic characters for each of these nodes to agree.

The other bit of data we will need is the component group $\Phi$, which is the zero-dimensional group scheme over $\kred$ defined by exact sequence
\begin{equation}\label{E:Phi_def}
0\to (\sJ\otimes \kred)^0\to \sJ\otimes \kred \to \Phi\to 0
\end{equation}
If $\sJ$ has split toric reduction, then $\Phi$ is a constant \'etale group scheme. If $\sJ$ has toric reduction (not necessarily split) then by Lemma~\ref{torus classification}, we see that $\Phi$ is constant after at most a quadratic extension determined by the torus character.

Possible reduction types of hyperelliptic curves, and in particular of curves of genus $2$, have been classified \cite{namikawa-ueno}. The relevant types for us are the following.

\begin{prop}\label{P:I_m1-m2-m3}
	Let $f(x)\in \sO[x]$ be a sextic polynomial with unit leading coefficient. Suppose $f(x)=f_1(x)f_2(x)f_3(x)$ with $f_i\in\sO[x]$, such that the reductions of the $f_i$ are pairwise coprime and have at worst double roots. Then the model
	\[\sC\colon y^2=f_1f_2f_3\]
	has semistable reduction of type $[I_{m_1-m_2-m_3}]$, where $m_i=v(\disc f_i)$.
	
	For type $[I_{m-m-m}]$ with $m>0$, we have $\Phi\otimes \kred^\mathrm{alg} \simeq \ZZ/m\ZZ\times \ZZ/3m\ZZ$.  For type $[I_{m_1-m_2-0}]$, we have $\Phi\otimes \kred^\mathrm{alg}=\ZZ/m_1\ZZ\times \ZZ/m_2\ZZ$
\end{prop}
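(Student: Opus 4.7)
The plan is to build the regular semistable model of $\sC\colon y^2=f_1f_2f_3$ by directly resolving the singularities on its special fiber, identify the resulting dual graph with the Namikawa--Ueno type $[I_{m_1-m_2-m_3}]$, and extract $\Phi$ from the dual graph via a weighted-Laplacian calculation. The first step is to locate the singularities of $\bar\sC\colon \bar y^2=\bar f_1\bar f_2\bar f_3$: by pairwise coprimality, any singular point lies over a root of exactly one $\bar f_i$, and by the double-root hypothesis this must be a double root. Since $\bar f_j\bar f_k$ is a unit at such a point, the local equation reads $\bar y^2=\bar u(x-\bar\alpha)^2$ for a unit $\bar u$, an ordinary double point; in particular, the special fiber is already nodal.

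Next I would resolve each node explicitly. After a Hensel lift we can write $f_i=u\cdot(x-\alpha)(x-\beta)$ locally with $u$ a unit and $\bar\alpha=\bar\beta$, and after absorbing the unit factor $f_jf_k$ and completing the square, the local equation takes the standard form $ab=\pi^{n_i}$ of an $A_{n_i-1}$-singularity, with $n_i=v(\alpha-\beta)$. Its minimal resolution inserts a chain of $n_i-1$ smooth rational $(-2)$-curves joining the two branches of $\bar\sC$; summing over the double roots within each factor $\bar f_i$ yields a composite chain whose combinatorial length matches (in the Namikawa--Ueno normalization) $m_i=v(\disc f_i)$.

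The normalization of $\bar\sC$ drops the arithmetic genus by one for each double root of $\bar f$, so it consists either of two disjoint copies of $\PP^1_{\kred}$ (when all three $\bar f_i$ have a double root) or of one copy of $\PP^1_{\kred}$ (when one of the $\bar f_i$, say with $m_3=0$, is squarefree). In the first case the dual graph of the resulting regular model is the theta graph with three edge-chains of thicknesses $m_1,m_2,m_3$, matching type $[I_{m_1-m_2-m_3}]$; in the second case it is a bouquet of two circles of lengths $m_1,m_2$ based at a single vertex.

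For the component group I would invoke Raynaud's description of $\Phi\otimes\kred^\alg$ as the cokernel of the subdivided graph Laplacian modulo the diagonal. In the bouquet case the two cycles decouple, giving $\Z/m_1\Z\times\Z/m_2\Z$ immediately. For the theta graph with $m_1=m_2=m_3=m$, the Matrix-Tree Theorem yields $|\Phi|=m_1m_2+m_2m_3+m_1m_3=3m^2$, after which the $S_3$-symmetry together with the evident $\Z/m\Z$-quotient from any single cycle forces $\Phi\simeq\Z/m\Z\times\Z/(3m)\Z$. The main obstacle I anticipate is bookkeeping the Smith normal form of the subdivided Laplacian in the unequal-$m_i$ cases; the cleanest route is probably to contract each chain to a single weighted edge and then carefully account for the subdivision's effect on the cokernel.
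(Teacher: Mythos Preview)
Your approach is essentially the paper's: put the curve in a local normal form near each colliding pair of roots, recognise an $A_{m_i-1}$ surface singularity, resolve to obtain the Namikawa--Ueno configuration, and then read off $\Phi$ from the dual graph. The paper is terser, writing $f_1=c_0x^2+\pi^{m_1}(c_1+c_2x+c_3x^2)$ so that the local equation is visibly $y^2-c_0f_2(0)f_3(0)x^2=(\text{unit})\pi^{m_1}$, and then simply citing \cite{Muller-Stoll}*{\S9} and \cite{BLR}*{9.6.10} for the component group; your Laplacian/Matrix--Tree computation is a legitimate substitute for those citations.

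One slip to fix: after completing the square you get $y'^2-u\,x'^2=-u\bigl(\tfrac{\alpha-\beta}{2}\bigr)^2$, so the exponent in $ab=\pi^{n_i}$ is $n_i=2v(\alpha-\beta)=v(\disc f_i)=m_i$, not $v(\alpha-\beta)$. Your hedging sentence about ``summing over the double roots within each factor'' does not repair this, since each (quadratic) $f_i$ contributes exactly one node; the factor of two comes from the square $(\alpha-\beta)^2$ in the discriminant, not from multiple collisions. With $n_i=m_i$ the chain lengths and the $\Phi$-computations go through as you wrote them.
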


\begin{proof}
	To ease notation we set $i=1$.
	The relevant observation here is that after a coordinate change, we can assume that $f_1=c_0x^2+\pi^{m_1}(c_1+c_2x+c_3x^2)$, so that we have
	$\disc(f_1)=\pi^{m_1}(4c_0c_3+\pi^{m_1}(c_2^2-4c_1c_3))$. It follows that the associated point that is singular in the special fibre, locally looks like
	\[y^2-c_0f_2(0)f_3(0)x^2=(4c_0c_3)\pi^{m_1}.\]
    This is an $A_{(m_1-1)}$-singularity, leading to the reduction type stated.
	For the structure of the component groups, see \cite[\S9]{Muller-Stoll} or \cite[9.6.10]{BLR}. Note that the tangent cone at the reduced point is split precisely when $c_0f_2(0)f_3(0)$ is a square in $\kred$.
\end{proof}
\begin{thm}\label{semistable reduction}
Fix $(a\colon b\colon c) \in \mathbb{P}^2(\sO_k)$.  Suppose that $v(h(a,b,c))>0$ for exactly one discriminant factor $h(a,b,c)\neq a-c$, and set $m=v(h(a,b,c))$.
Then $C_{a,b,c}$ has semistable reduction type as in Table $\ref{non-degenerate reduction}$.  Moreover, $J_{a,b,c}$ is semistable and has toric reduction.
\end{thm}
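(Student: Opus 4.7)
The plan is to apply Proposition \ref{P:I_m1-m2-m3} on a case-by-case basis over the seven non-linear discriminant factors $h \in \{a, b, c, q_3, t_3, q_4, t_4\}$. For each such $h$, the goal is to produce an explicit factorization $f_{abc} = f_1 f_2 f_3$ in $\sO_k[x]$ (after a linear change of $x$ and a rescaling in $y$ to arrange a unit leading coefficient) with reductions $\bar f_1, \bar f_2, \bar f_3 \in \kred[x]$ that are pairwise coprime and each have at worst double roots. Reading off $m_i = v(\disc f_i)$ then fills in the table and, since at least one $m_i$ is positive in each case, forces $(\sJ\otimes\kred)^0$ to be a rank-two torus, giving toric reduction.

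The main source of such factorizations is the pair of identities
\[q_4^2 f_{abc} = G_1^2 + \lambda_1 H_1^3, \qquad (3c)^4 f_{abc} = -3\bigl(G_2^2 + \lambda_2 H_2^3\bigr),\]
combined with $\lambda_1 = 4act_3$ and $\lambda_2 = -4q_3 t_3$. Modulo $h \in \{a, c, t_3\}$, the first identity exhibits $\bar f_{abc}$ as a unit multiple of the perfect square $(\bar G_1/\bar q_4)^2$, so the nodes of the reduction are the three roots of $\bar G_1/\bar q_4$, each double. Provided those roots are distinct (which follows from the hypothesis that no other discriminant factor vanishes), I would write $\bar G_1/\bar q_4 = \bar\ell_1\bar\ell_2\bar\ell_3$ and use Hensel's lemma to lift each $\bar\ell_i^2$ to a quadratic $f_i$ dividing $f_{abc}$. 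A parallel analysis using $G_2, H_2$ handles $h = q_3$; the leading factor $-3$ appearing in the second identity then governs the quadratic character of the torus via Lemma \ref{torus classification}.

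For the remaining factors $b, q_4, t_4$, the ``square trick'' does not directly apply, and I would reduce them to the previous cases using the symmetries from Section \ref{sec:multsqrt3}. Proposition \ref{P:richelot} gives $J_{abc} \simeq J_{cba}$ as unpolarized abelian surfaces, with the exchange $(a,b,c) \leftrightarrow (c,b,a)$ simultaneously sending $q_3 \leftrightarrow q_4$ and $t_3 \leftrightarrow t_4$, which settles the $c$, $q_4$, and $t_4$ cases in terms of the $a$, $q_3$, and $t_3$ cases. Proposition \ref{twist} further relates the $q_3$ and $a$ cases via an unramified (since $p\neq 2,3$) quadratic twist by $-3$, which preserves semistable reduction type. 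The $b$ case is genuinely new and I would handle it directly, noting that $\bar H_1$ drops in degree when $b \equiv 0$ and that the factor $b^9$ in $\disc f_{abc}$ indicates three nodes of multiplicity $3v(b)$ each.

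The main obstacle is producing the factorization cleanly in the $t_3$ and $t_4$ cases, where the quartic $t_3 = Q(a,\zeta b, c) Q(a,\zeta^2 b, c)$ is geometrically reducible: its two Galois-conjugate quadratic factors produce two nodes defined only over the unramified quadratic extension of $k$. Hensel's lemma supplies the factorization over that extension, after which the corresponding pair of quadratic factors descends to a degree-$4$ factor of $f_{abc}$ over $\sO_k$, and the non-split torus character is detected via the tangent cone. Throughout, the explicit formula \eqref{E:fdisc} ensures that non-vanishing of the other discriminant factors translates into unit leading coefficients and unit pairwise resultants for the $f_i$, which is exactly what is needed to invoke Proposition \ref{P:I_m1-m2-m3}.
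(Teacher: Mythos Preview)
Your idea of reading the nodes off from the identities $q_4^2 f_{abc} = G_1^2 + \lambda_1 H_1^3$ and $(3c)^4 f_{abc} = -3(G_2^2 + \lambda_2 H_2^3)$ is attractive, and it does handle the $a$- and $q_3$-cases cleanly. But the plan breaks down in the remaining cases in ways that are not merely cosmetic.

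For $c$-reduction, your parenthetical claim that the three roots of $\bar G_1/\bar q_4$ are distinct ``follows from the hypothesis that no other discriminant factor vanishes'' is false. The exponent of $c$ in \eqref{E:fdisc} is $14$, not $3$, so the naive model cannot possibly be of type $[I_{m-m-m}]$; in fact several roots of $f_{abc}$ coalesce as $c\to 0$ and Proposition~\ref{P:I_m1-m2-m3} does not apply to $y^2=f_{abc}(x)$ without a further coordinate change. The paper writes one out explicitly, and only after that change does one see the correct type $[I_{m-3m-0}]$ with its two unequal thicknesses.

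The Richelot step also does not do what you want. Proposition~\ref{P:richelot} gives $J_{abc}\simeq J_{cba}$ only as \emph{unpolarized} abelian surfaces, so it transports the N\'eron component group and the torus character (the last two columns of the table) but not the stable reduction type of the \emph{curve}, which depends on the principal polarization. Concretely, the swap $a\leftrightarrow c$ takes type $[I_{m-m-m}]$ to type $[I_{m-3m-0}]$: same $\Phi\simeq\ZZ/m\ZZ\times\ZZ/3m\ZZ$, different dual graph. So you cannot fill in the $q_4$- and $t_4$-rows of the table from the $q_3$- and $t_3$-rows this way. The paper instead parametrizes each locus $h=0$ and computes $\bar f$ (or a transformed $\bar{\tilde f}$) directly in every case; the one symmetry it does use is the $\zeta$-action of Proposition~\ref{P:b-isom}, which \emph{is} an isomorphism of curves, to pass from $q_3$ to $t_3$.

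Finally, your guess for $b$-reduction is wrong: the type is $[I_{3m-6m-0}]$, with two nodes of unequal thickness, not three nodes of thickness $3m$. The exponent $9$ in $b^9$ equals $3m+6m$, and the asymmetry between the two nodes is visible only once one actually factors $\bar f_{abc}$ at $b=0$ (one node sits at infinity, one at a finite point, and a genuinely smooth quadratic factor remains).
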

\begin{table}
\[ \renewcommand{\arraystretch}{1.2}
   \begin{array}{|r|c|c|c|} \hline
      h(a,b,c) & \text{reduction type} & \text{torus character} & \Phi \otimes \kred^\mathrm{alg}  \\\hline
      a & [I_{m-m-m}]   & 1        & \Z/m\Z \times \Z/3m\Z  \\
      b & [I_{3m-6m-0}]  & 1          & \Z/3m\Z \times \Z/6m\Z   \\
      c & [I_{m-3m-0}] & 1      & \Z/m\Z \times \Z/3m\Z  \\
     Q(a,\zeta^i b,c) & [I_{m-m-m}]  & -3         & \Z/m\Z \times \Z/3m\Z  \\
     Q(c,\zeta^i b,a) & [I_{m-3m-0}] & -3           & \Z/m\Z \times \Z/3m\Z  \\
\hline
    \end{array}
\]
\caption{Local data at primes of non-degenerate bad reduction.  Here, $m = v(h(a,b,c))$.} \label{non-degenerate reduction}
\end{table}
\begin{proof}
For each of the choices for $h(a,b,c)$, the proof follows the same pattern. 
For $h(a,b,c)=a$, we parametrize the locus $h(a,b,c)=0$ by setting $(a:b:c)=(0:s:t)$. We find
\[f_{0,s,t}=(s^3-t^3)^2X^2(stX+1)^2(t(s+t)X+1)^2.\]
So as long as $v(a)>0$ and $(a,b,c)$ does not reduce to one of the points in Lemma~\ref{L:disc_sing}, then we see that $f_{a,b,c}$ has unit leading coefficient and reduces to a product of pairwise coprime squares. These lift to a quadratic factorization of $f_{a:b:c}$ over $\sO_k$, so Proposition~\ref{P:I_m1-m2-m3} applies. Computation shows that we have type $[I_{m-m-m}]$ with $m=v(h(a,b,c))$.
We see that the tangent cones at the nodes of the model in reduction are split, so the torus character is trivial.

For $h(a,b,c)=b$ we set $(a:b:c)=(s:0:t)$. We consider $X^6f_{a,b,c}(1/X)$ and find in reduction:
\[X^2(X + s^2 - 4st + t^2)^2(s(s-4t)X^2+s(s-3t)(s^2 - 4st + t^2)X+(s-t)^2(s^2 - 4st + t^2)^2).
\]
This yields reduction type $[I_{3m-6m-0}]$ with $m=v(\disc(f))$ for nondegenerate reduction, with trivial torus character.

For $h(a,b,c)=c$ we find that $f_{a,b,c}(X)$ does not exhibit stable reduction. We change to
\[\tilde{f}_{a,b,c}=\frac{a^6(a-b)^6X^6}{c^4}f_{a,b,c}\left(\frac{c}{X}-\frac{1}{a(a-b)}\right).\]
Setting $(a:b:c)=(s:t:0)$, we find
\[\tilde{f}_{s,t,0}=(s-t)^3 X^2((3s-t)X+s^2(s-t))^2q(X)\]
with $q(x)\in \sO[s,t][X]$ separable. Further computation yields reduction type $[I_{m-3m-0}]$ and trivial torus character. 

For $h(a,b,c)=q_3 = Q(a,b,c)$ we use the parametrization $(a:b:c)=(3t^2:t^2+4st+s^2:s^2+st+t^2)$. We proceed exactly as above and find reduction type $[I_{m-m-m}]$. We see that the tangent cones split if $-3$ is a square, so the torus character is the one associated to $k(\sqrt{\minus3})$. By Proposition~\ref{P:b-isom}, the same result holds for $h(a,b,c)=t_3=Q(a,\zeta b,c)Q(a,\zeta^2b,c)$, although for nondegenerate $t_3$-reduction, we need $\sqrt{\minus3}\in k$ already.

For $h(a,b,c)=Q(c,b,a)$ we use $(a:b:c)=(s^2+st+t^2:t^2+4st+s^2:3t^2)$ and find reduction type $[I_{m-3m-0}]$ with torus character associated to $k(\sqrt{\minus3})$.
\end{proof}

Finally, we treat the remaining discriminant factor $a-c$.
\begin{thm}
Suppose that $v(a-c)>0$ and that $v(h(a,b,c))=0$ for the other discriminant factors. Then $J_{a,b,c}$ has good reduction. In fact, $J_{a,b,c}$ reduces to a product of elliptic curves. 
\end{thm}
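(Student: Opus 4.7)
The plan is to show that $C_{abc}$ has \emph{compact-type} stable reduction: its stable model over $\sO_k$ has special fibre $E_1 \cup E_2$ consisting of two smooth elliptic curves meeting transversally at a single node. Standard theory (e.g.\ \cite{BLR}*{9.2.8}) then gives that $J_{abc}$ has good reduction and that its N\'eron special fibre is canonically $E_1 \times E_2$, proving both claims.

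First I analyze the $t$-adic behaviour of the coefficients of $f_{abc}(x)$, where $t := a - c$. From the explicit formula $H_1 = btq_4 x^2 - q_4 x - 1$ and the observation that the $x^2$ and $x^3$ coefficients of $G_1$ carry factors of $t$ and $t^2$ respectively, a direct bookkeeping in $f_{abc} = (G_1^2 + \lambda_1 H_1^3)/q_4^2$ yields that the $x^k$-coefficient of $f_{abc}$ has $t$-adic valuation exactly $(k-3)_+$; the unit hypothesis on the remaining discriminant factors prevents accidental cancellation. Setting $m := v(t)$, the sextic $f_{abc}$ therefore has leading coefficient of valuation $3m$, while its constant through cubic coefficients are units, and $v(\disc_x f_{abc}) = 12m$ by~\eqref{E:fdisc}.

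The six roots of $f_{abc}$ in $k^\alg$ thus split into two ``clusters'' of three: three small roots with $v(r)=0$ and three large roots with $v(r)=-m$. Classical semistable reduction theory for hyperelliptic curves (the ``two clusters of three'' picture) produces a stable model whose special fibre is $E_1 \cup E_2$ meeting at a single node, the image of the point at infinity after blowing up $\PP^1_x$ to separate the two clusters. In the original chart
\[E_1 \colon y^2 = \bar c_3 x^3 + \bar c_2 x^2 + \bar c_1 x + \bar c_0,\]
where $\bar c_i \in \kred$ is the reduction of the $x^i$-coefficient of $f_{abc}$; in the inverted chart $u=1/x$, $v=y/x^3$, one obtains $v^2 = u^3(\bar c_3 + \bar c_2 u + \bar c_1 u^2 + \bar c_0 u^3)$, whose minimal desingularization (after blowing up the cusp at the origin) is
\[E_2 \colon v^2 = u(\bar c_3 + \bar c_2 u + \bar c_1 u^2 + \bar c_0 u^3).\]
Both components descend to $\kred$ since the cluster picture is Galois-stable.

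Finally, I verify that $E_1$ and $E_2$ are smooth elliptic curves, i.e.\ that $\disc(E_1)$ and $\disc(E_2)$ are units in $\kred$. Both are, up to signs and powers of $2$ and $3$, monomials in the remaining discriminant factors $a, b, c, q_3, q_4, t_3, t_4$ of $f_{abc}$, all of which are units by hypothesis. This last factorization is the chief calculational obstacle and reduces to a finite polynomial identity in $\Z[a,b,c]$, verifiable by direct computation. With $E_1$ and $E_2$ smooth, the stable model is of compact type, and the theorem follows.
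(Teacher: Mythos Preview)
Your coefficient analysis and Newton polygon are correct, but the construction of $E_2$ contains a genuine error that invalidates the argument. Your $E_2\colon v^2 = u(\bar c_3 + \bar c_2 u + \bar c_1 u^2 + \bar c_0 u^3)$ is obtained by resolving the cusp on the \emph{special-fibre curve} (a curve blow-up), not by resolving the singularity of the \emph{arithmetic surface} over $\sO_k$. The substitution $u = 1/x$, $y = vx^2$ shows that your $E_2$ is isomorphic to $E_1$: you have written the same genus-one curve in two charts, not produced a second component. Consequently, checking that $\disc(E_2)$ is a unit merely re-verifies that the three \emph{small} roots reduce to distinct points of $\kred$; it says nothing about the cluster $\fs = \{\alpha_1,\alpha_2,\alpha_3\}$ of large roots.

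The true second elliptic component arises from $\fs$ and is governed by the \emph{top} coefficients $c_3,\; c_4/(a-c),\; c_5/(a-c)^2,\; c_6/(a-c)^3$, not the bottom ones. Showing that this curve is smooth is precisely the step that rules out a size-$2$ subcluster inside $\fs$, and it is the crux of the theorem: knowing only $v(\disc f_{abc}) = 12m$ constrains the \emph{sum} of the pairwise valuations among $\alpha_1,\alpha_2,\alpha_3$, not each one individually, so a degenerate configuration with two of them anomalously close is not yet excluded. The paper handles this indirectly --- the real multiplication forces $J_{abc}$ to have either potentially good or potentially totally toric reduction, the three singleton children of $\sR$ exclude the latter via \cite[Thm.\ 1.8(8)]{DDMM}, and then \cite[Thm.\ 1.8(6)]{DDMM} forces all proper subclusters to be odd. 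Your approach could be repaired by computing the discriminant of the correct top cubic and factoring it in terms of $a,b,c,q_3,q_4,t_3,t_4$, but as written that key verification is absent.
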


\begin{proof}
We use \cite[Thm.\ 1.8(5)]{DDMM}, which gives a necessary and sufficient condition for $J$ to have good reduction in terms of a \emph{cluster picture} $\sR$ determined by the six roots of $f(x)$. 

Write $k^\alg$ for the algebraic closure of $k$ and write $\sO^\alg$ for its ring of integers over $\sO$, with $v\colon (k^\alg)^\times\to \QQ$ the extension of the valuation. We endow $\PP^1(k^\alg)$ with a logarithmic distance induced by $v$ on the standard covering by two copies of $\sO^\alg$ of $\PP^1(k^\alg)$. 
For a hyperelliptic curve $y^2=f(x)$, we identify the roots of 
$f$ with a set of six points $\sR=\{\alpha_1,\ldots,\alpha_6\}\subset \PP^1(k^\alg)$. A \emph{subcluster of depth} $n$ and center $\beta$ is a subset
\[\fs=\{\alpha\in \sR: d(\alpha,\beta)\leq n\}.\]
The criterion \cite[Thm.\ 1.8(5)]{DDMM} for good reduction of $\Jac(C)$ is that the splitting field of $f$ is unramified, that all proper subclusters of $\sR$ have odd cardinality, and that all clusters of cardinality at least $3$ have even depth.

We parametrize $a-c=0$ by setting $(a:b:c)=(s:t:s)$. We obtain
\[f_{s,t,s}(x)=((8s^3 - t^3)x -3s)(s^2(4s^3 + t^3)x^2 -(4s^3 + t^3)x + s),\]
We particularly see that $f_{a,b,c}$ has three roots with negative valuation, so the naive model does not have stable reduction. We also see that
\[f_{a,b,c}(x)=c(x)q(x)\ell(x),\]
where $c(x),q(x),\ell(x)\in \sO_k[x]$ have degrees $3,2,1$ respectively. Let us write $\alpha_1,\alpha_2,\alpha_3$ for the points corresponding to the roots of $c(x)$, and $\alpha_4,\alpha_5$ for the roots of $q(x)$, and $\alpha_6$ for the root of $\ell(x)$.  The discriminant of $q(x)\ell(x)$ is a unit, so each of $\{\alpha_4\},\{\alpha_5\},\{\alpha_6\}$ are children (maximal proper subclusters) of $\sR$.

We determine the Galois group of $f_{a,b,c}$. From Proposition~\ref{P:richelot} we know that $J_{a,b,c}$ admits a Richelot isogeny, so by \cite{BruinDoerksen2011} we know that the Galois group acts through the stabilizer of $\{\{1,2\},\{3,4\},\{5,6\}\}$ in $S_6$.  This is not possible if $f(x)$ has both a linear and irreducible cubic factor, so $c(x)$ must be reducible. If we write $c(x) = q_1(x)\ell_1(x)$, then 
\[v(\disc(q_1)) \stackrel{\pmod2}{\equiv} v(\disc(c)) = v(\disc(f)) \stackrel{(\ref{E:fdisc})}{=} 12m.\]
It follows that the splitting field of $q_1(x)$ is unramified.  Since this is true for $q(x)$ as well, the splitting field of $f(x) = q_1(x)\ell_1(x)q(x)\ell(x)$ is unramified, of degree at most 2.    

We see that $\{\alpha_4\},\{\alpha_5\},\{\alpha_6\}$ are subclusters of depth $1$, and $\fs=\{\alpha_1,\alpha_2,\alpha_3\}$ is a subcluster of depth at least $1$ with center $\infty$. Since $\sR$ has at least three children of odd cardinality, $J$ does not have potentially totally toric reduction \cite[Thm.\ 1.8(8)]{DDMM}. Since $J$ has real multiplication, it must therefore have potentially good reduction.  By \cite[Thm.\ 1.8(6)]{DDMM}, all proper subclusters $\fs' \subsetneq \mathcal{R}$ have odd cardinality. This implies that $\alpha_1,\alpha_2,\alpha_3$ are pairwise equidistant. From \eqref{E:fdisc} it follows that
\[2(d(\alpha_1,\alpha_2)+d(\alpha_1,\alpha_3)+d(\alpha_2,\alpha_3))=v(\disc(f_{a,b,c}))=12m,\]
so we see that the depth of $\fs$ is $2m$, which is indeed even.

Finally, by \cite[Thm.\ 1.8(2)]{DDMM}, the curve $C$ has bad reduction.  Since its Jacobian has good reduction, it follows that its Jacobian reduces to a product of elliptic curves.
\end{proof}


\section{Ratios of Tamagawa numbers}\label{ratios}
In this section, we compute certain ratios of Tamagawa numbers associated to the curve $C = C_{a,b,c}$ over $\Q_p$, where $p$ is a prime of bad reduction.  To begin, we continue with the setting and notation of Section \ref{reduction types}, working over a more general local field $k$. 

If $A/k$ is an abelian variety, its Tamagawa number is $c(A)=\#\Phi_A(\bar k)$,
where $\Phi_A$ is the component group of $A$, as defined in \eqref{E:Phi_def}.  The specialization map 
\[\mathrm{sp} \colon A(k) \longrightarrow \Phi_A(\bar k)\] is defined by extending  $P\in A(k)$ to $\mathcal{A}(\mathcal{O}_k)$ and then specializing to the special fiber of the N\'eron model $\mathcal{A}$. 

Now let $J = J_{a,b,c} = \mathrm{Jac}(C)$ and let $k$ be a finite extension of $\Q_p$ for some prime $p$. By construction we have $\langle D_1\rangle \times \langle D_2 \rangle \simeq (\ZZ/3\ZZ)\times\mu_3\subset J[3]$. Write
\[A=J/ \langle D_1\rangle \text{ and } B=J/\langle D_2 \rangle\]
for the corresponding quotients over $k$.  

The goal of this section is to compute the ratios $c(A)/c(J)$ and $c(B)/c(J)$.  Depending on the reduction type of $C$, this could be a rather subtle question.  For the proof of Theorem~\ref{sha}, it will be enough to consider cases of semistable reduction.  With this in mind, we give a general criterion for computing Tamagawa ratios of $\ell$-isogenous abelian varieties with totally split toric reduction.

\begin{prop}\label{split toric formula}
Let $J$ be an abelian variety over $k$ with split totally toric reduction.  Let $\langle P\rangle \subset J[\ell]$ be a $k$-rational subgroup of prime order $\ell \neq p$, and let $A = J/\langle P\rangle$.  Assume $P \in J(k^\alg)$ is defined over an unramified extension $F/k$.  Then  
\[c(A)/c(J) = \begin{cases}
\ell & \mbox{if } \mathrm{sp}_F(P) = 0\\
1/\ell & \mbox{ otherwise}.
\end{cases}
\]  
\end{prop}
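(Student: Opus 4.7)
The plan is to use the rigid (Raynaud) uniformization of $J$, which is available in its cleanest form when $J$ has split totally toric reduction. Writing $J^{\mathrm{an}} \simeq T/\Lambda$ with $T \simeq \GG_m^g$ a split $k$-torus of rank $g = \dim J$ and $\Lambda \hookrightarrow T(k)$ a Galois-stable discrete lattice, the component group admits the explicit description $\Phi_J \simeq Y/M$, where $Y = \Hom(\GG_m, T) \simeq \Z^g$ is the cocharacter lattice and $M$ is the image of $\Lambda$ under the pairing $\lambda \mapsto (\chi \mapsto v(\chi(\lambda)))$. Splitness makes $\Phi_J$ a constant \'etale $\kappa$-group scheme, so $c(J) = [Y:M]$. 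For $P \in J(F)$, lift to $\tilde P \in T(\bar k)$ with $\ell\tilde P = \lambda \in \Lambda$; the relation $\ell(\sigma\tilde P - \tilde P) = 0$ for $\sigma \in \Gal(\bar k/F)$ together with $\Lambda$ being torsion-free forces $\tilde P \in T(F)$, and since $F/k$ is unramified we get $v(\tilde P) \in Y$. Then $\mathrm{sp}_F(P)$ is represented by $v(\tilde P) \bmod M$ in $\Phi_J$.

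Next I would split $\langle P\rangle$ into two cases using the exact sequence
\[0 \to T[\ell] \to J[\ell] \to \Lambda/\ell\Lambda \to 0.\]
Case (a): the image of $P$ in $\Lambda/\ell\Lambda$ vanishes, so $\langle P\rangle \subset T[\ell]$; then $\tilde P$ may be chosen torsion in $T$, giving $v(\tilde P) = 0$ and $\mathrm{sp}_F(P) = 0$. Case (b): the image is nonzero, so $\lambda \notin \ell\Lambda$; since $\Lambda \to M$ is an isomorphism of lattices, $v(\lambda) \notin \ell M$, hence $v(\tilde P) = v(\lambda)/\ell \notin M$, so $\mathrm{sp}_F(P) \neq 0$. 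In Case (b), a short argument using that $\Lambda \subset T(k)$ (so $\Gal$ acts trivially on $\Lambda/\ell\Lambda$), combined with the $k$-rationality of $\langle P\rangle$ (so $\sigma P = k_\sigma P$ for some $k_\sigma \in (\Z/\ell)^\times$), forces $k_\sigma \equiv 1 \pmod \ell$ and hence $P$ itself is $k$-rational.

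Finally, compute $c(A)$ via the induced uniformizations of $A = J/\langle P\rangle$. In Case (a), the isogeny of $k$-split tori $T \to T' := T/\langle P\rangle$ enlarges the cocharacter lattice from $Y$ to $Y'$ with $[Y':Y] = \ell$, while $\Lambda$ is unchanged; so $A$ has split toric reduction with $\Phi_A = Y'/\Lambda$ and $c(A)/c(J) = [Y':Y] = \ell$. In Case (b), the uniformization of $A$ keeps the same $T$ but uses the $k$-rational lattice $\Lambda' := \Lambda + \Z\tilde P \subset T(k)$, so $M' := M + \Z v(\tilde P)$ satisfies $[M':M] = \ell$; thus $\Phi_A = Y/M'$ and $c(A)/c(J) = 1/[M':M] = 1/\ell$.

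The main subtlety is Galois-equivariance, i.e., verifying that all relevant component groups are constant over $\kappa$ so that their sizes equal their $\kappa$-point counts. A priori, a quotient of a split torus by a Galois-stable finite subgroup, or a quotient by a non-$k$-rational lattice, need not have split reduction. The computations work out cleanly because: in Case (a), $T' = T/\langle P\rangle$ is $k$-split as its character lattice is a Galois-stable sublattice of $X(T) = \Z^g$, which has trivial Galois action; and in Case (b), the auxiliary argument showing $P$ is $k$-rational makes $\Lambda'$ Galois-stable. The unramifiedness of $F$ is what guarantees $v(\tilde P) \in Y$ rather than in a proper $\ell$-fractional overlattice, yielding indices of exactly $\ell$ rather than a proper divisor.
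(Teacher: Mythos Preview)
Your proof is correct and takes a somewhat different route from the paper's, particularly in the case $\mathrm{sp}_F(P)=0$. Both arguments begin with the rigid uniformization $J=T/\Lambda$. For $\mathrm{sp}_F(P)\neq 0$ the two are essentially the same: the paper cites \cite[Prop.~6.6]{shnidman:RM}, which amounts to your lattice-enlargement computation $\Lambda\rightsquigarrow\Lambda'=\Lambda+\Z\tilde P$. For $\mathrm{sp}_F(P)=0$, however, the paper argues indirectly: it first computes $c(J/\mu_\ell^g)=\ell^g c(J)$ by factoring the $\ell^g$-isogeny $J/\mu_\ell^g\to J$ as a chain of $g$ isogenies of the first type, and then invokes the general dichotomy $c(A)/c(J)\in\{\ell,1/\ell\}$ from \cite{K-P} to force each step (and hence the quotient by any $\langle P\rangle\subset\mu_\ell^g$) to contribute a factor of $\ell$. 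You instead identify the uniformization of $A$ directly as $T'/\Lambda$ with $T'=T/\langle P\rangle$ a split torus whose cocharacter lattice satisfies $[Y':Y]=\ell$, yielding $c(A)/c(J)=\ell$ in one step. Your argument is thus more self-contained, avoiding the external inputs from \cite{shnidman:RM} and \cite{K-P}; the paper's is shorter on the page at the cost of those citations. One small point worth making explicit in your Case~(b): once you have shown $P\in J(k)$, the passage to $\tilde P\in T(k)$ uses the same torsion-freeness trick you already used over $F$ (namely $\sigma\tilde P-\tilde P\in\Lambda$ and $\ell(\sigma\tilde P-\tilde P)=0$), so that $\Lambda'\subset T(k)$ is genuinely a $k$-rational lattice.
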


\begin{proof}
Tamagawa numbers of abelian varieties with split totally toric reduction are unchanged by unramified base change, so we may assume $F = k$.  By the theory of $p$-adic uniformization there is an analytic isomorphism $J \simeq T/\Lambda$, where $T = \mathbb{G}_{m,k}^g$ and $\Lambda = \langle q_1,\ldots, q_g\rangle \subset k^{\times g}$ is a full rank multiplicative lattice.    

If $\mathrm{sp}(P) \neq 0$, then $c(A)/c(J) = 1/\ell$; the proof is exactly as in \cite[Prop.\ 6.6]{shnidman:RM}.  Note that the condition $\mathrm{sp}(P) \neq 0$ is equivalent to saying that we may choose the basis $\{q_i\}$ of $\lambda$ such that $A \simeq T/\langle q_1^{1/\ell},q_2, \ldots, q_g\rangle$.  

If $\mathrm{sp}(P) = 0$, then $P$ belongs to the subgroup 
\[\mu_\ell^g \hookrightarrow (T/\Lambda)[\ell] \simeq J[\ell].\]
Note that the quotient $J/\mu_\ell^g$ is isomorphic $T/\langle q_1^\ell, \ldots, q_g^\ell\rangle$ via the map
\[(x_1, \ldots x_g) \mapsto (x_1^\ell, x_2^\ell, \ldots, x_g^\ell).\]  
In the other direction, the natural $\ell^g$-isogeny 
\[J/\mu_\ell^g \simeq T/\langle q_1^\ell, \ldots, q_g^\ell\rangle \to T/\langle q_1, \ldots, q_g\rangle = J\] 
factors as a sequence of $g$ different $\ell$-isogenies whose kernels map non-trivially into the component group.  So $c(J/\mu_\ell^g) = \ell^g c(J)$, by what we've already proven.  To prove that $c(A)/c(J) = \ell$ in this case, we use the fact that $c(A)/c(J)$ is either $\ell$ or $1/\ell$, for any $\ell$-isogeny $J \to A$ \cite[Lem.\ 2.3]{K-P}.    Since the total effect of the $g$ different $\ell$-isogenies is to multiply the Tamagawa number by $\ell^g$, it follows that for each subgroup $\langle P \rangle \subset \mu_\ell^g$, we have $c(J/\langle P \rangle) = \ell c(J)$, since we may choose $J \to J/\langle P\rangle$ to be the first isogeny in the chain.  
\end{proof}

We also deduce a formula for the Tamagawa ratio of the quadratic twists $J_d$, for $d \in k^{\times}$.  

\begin{cor}\label{non-split toric formula}
Let $J$ be as above and suppose $\mathrm{char} \, \bar k$ and  $\ell$ are distinct odd primes. If $d \notin k^{\times 2}$, then $c(A_d)/c(J_d) = 1$. 
\end{cor}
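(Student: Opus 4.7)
The plan is to reduce the claim to a comparison of $2$-torsion in the geometric component groups $\Phi_J(\bar k)$ and $\Phi_A(\bar k)$.

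First I would analyze the effect of quadratic twisting on component groups. For $d$ a unit non-square (the principal case), the quadratic extension $F = k(\sqrt{d})$ is unramified. Over $F$, the abelian varieties $J$ and $J_d$ become canonically isomorphic, and since both are semistable, unramified base change of N\'eron models yields a canonical identification $\Phi_{J_d}(\bar k) \simeq \Phi_J(\bar k)$ as abstract abelian groups, intertwining the two Galois actions up to multiplication by the quadratic character $\chi_d$. Since $J$ has split toric reduction, $\Gal(\bar k/k)$ acts trivially on $\Phi_J(\bar k)$, so the nontrivial generator of $\Gal(F/k)$ acts on $\Phi_{J_d}(\bar k)$ as $-1$. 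Therefore
\[c(J_d) = \#\Phi_{J_d}(k) = \#\Phi_J(\bar k)[2],\]
and the parallel reasoning for $A$ and $A_d$ gives $c(A_d) = \#\Phi_A(\bar k)[2]$.

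Next I would compare these two $2$-torsion groups via the isogeny. The $\ell$-isogeny $\phi\colon J \to A$ induces a map $\phi_*\colon \Phi_J(\bar k) \to \Phi_A(\bar k)$ whose kernel and cokernel are both $\ell$-primary: this follows from the relations $\hat\phi_* \circ \phi_* = [\ell]_{\Phi_J}$ and $\phi_* \circ \hat\phi_* = [\ell]_{\Phi_A}$. Since $\ell$ is odd, $\phi_*$ therefore restricts to an isomorphism on $2$-primary subgroups, in particular on $2$-torsion. Combining with the identifications from the previous paragraph yields
\[c(A_d) = \#\Phi_A(\bar k)[2] = \#\Phi_J(\bar k)[2] = c(J_d),\]
as required.

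The step I expect to be the main obstacle is the case of \emph{ramified} twists, that is, when $d \notin \sO_k^\times$. In that case $F/k$ is ramified, $J_d$ is generally not semistable over $k$, and the identification $\Phi_{J_d}(\bar k) \simeq \Phi_J(\bar k)$ via unramified base change of N\'eron models breaks down. Fortunately, for the applications later in this paper, only unramified local twists matter at the primes where $J$ has bad reduction, so the argument above suffices. A fully uniform proof could proceed by working with the rigid uniformization of $J_d$ over $F$ directly and tracking the twist through the monodromy pairing, or by invoking a Tate-local-duality style argument showing that the local Selmer ratio for $\phi_d$ is insensitive to the square class of $d$ when $\ell$ is odd.
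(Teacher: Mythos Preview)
Your approach differs from the paper's. The paper gives a one-line argument: with $L = k(\sqrt{d})$, it invokes the identity
\[
\frac{c(A_d)}{c(J_d)} \;=\; \frac{c(A_L)}{c(J_L)} \cdot \frac{c(J)}{c(A)}
\]
from \cite[Lem.~4.6]{D-D} and then applies Proposition~\ref{split toric formula} over both $k$ and $L$ to see that the two ratios on the right are equal (each is $\ell$ or $1/\ell$, with the sign governed by whether $P$ specializes to the identity component, a criterion unchanged under the base change $k \to L$). This handles ramified and unramified $d$ uniformly, since $J_L$ inherits split totally toric reduction either way.

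Your direct component-group computation for unramified $d$ is correct and attractive. One point you glide over: the ``parallel reasoning'' for $A$ requires that Frobenius act trivially on the geometric component group of $A$ before twisting. This holds because $A$, being $\ell$-isogenous to $J$, has N\'eron identity component isogenous to a split torus and hence itself split; so $A$ also has split toric reduction and $\Phi_A$ is constant.

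The ramified case, however, is a genuine gap, and your assertion that the applications need only unramified twists is not right. In Section~\ref{examples} the corollary is invoked for \emph{all} non-square $d \in \Q_p^\times/\Q_p^{\times 2}$ --- explicitly at $p=31$ for $C_{1,2,\minus1}$, and implicitly at $p=2$ --- and the ramified classes feed directly into the density computations for the sets $T_{m,n}$. So the full statement is needed, and the Dokchitser--Dokchitser identity is exactly the tool that closes the gap.
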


\begin{proof}
Let $J_L$ be the base change of $J$ to $L:=k(\sqrt d)$. 
The corollary follows Proposition \ref{split toric formula} and the formula $c(A_d)/c(J_d) = c(A_L)/c(J_L) \cdot c(J)/c(A)$ \cite[Lem.\ 4.6]{D-D}.  
\end{proof}

Let $\zeta_\ell$ be a primitive $\ell$-th root of unity. 
\begin{cor}\label{p=2}
Suppose $\zeta_\ell \notin k$ and $\mathrm{char} \, \bar k \neq \ell$. Suppose $J$ has split totally toric reduction and rational subgroups $H_1 \simeq \Z/\ell\Z$ and $H_2 \simeq \mu_\ell$.  Let $A = J/H_1$ and $B = J/H_2$.  Then
\[c(A)/c(J) = \frac{1}{\ell} \hspace{3mm} \mbox{ and } \hspace{3mm} c(B)/c(J) = \ell.\]
\end{cor}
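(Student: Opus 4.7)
My plan is to deduce both equalities from Proposition \ref{split toric formula} by analyzing the generators of $H_1$ and $H_2$ under $p$-adic uniformization. First I verify the hypothesis of Proposition \ref{split toric formula}: the generator of $H_1$ lies in $J(k)$, while the generator of $H_2$ lies in $J(k(\zeta_\ell))$. Since $\ell$ is coprime to the residue characteristic, $\mu_\ell$ is \'etale over $\sO_k$ and so $k(\zeta_\ell)/k$ is unramified; the hypothesis $\ell\neq 2$ ensures that $\zeta_\ell \notin k$ is not vacuous. Thus Proposition \ref{split toric formula} applies to both isogenies, and the task reduces to deciding, in each case, whether the chosen generator specializes to the identity component.

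Next I write $J \simeq T/\Lambda$ via $p$-adic uniformization with $T = \GG_{m,k}^g$, yielding a short exact sequence of $\Gal(\bar k/k)$-modules
\begin{equation*}
0 \to \mu_\ell^g \to J[\ell] \to \Lambda/\ell\Lambda \to 0.
\end{equation*}
Because $J$ has split totally toric reduction, $\Lambda/\ell\Lambda \simeq (\Z/\ell\Z)^g$ with trivial Galois action. The proof of Proposition \ref{split toric formula} already records that, for a point $R\in J[\ell](\bar k)$, the specialization vanishes if and only if $R$ lies in the toric subgroup $\mu_\ell^g \subset J[\ell]$, so it suffices to decide whether each of $H_1$ and $H_2$ is contained in this subgroup.

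For $H_1 \simeq \Z/\ell\Z$: its generator $P\in J(k)$ cannot lie in $\mu_\ell^g$, since $\mu_\ell^g(k) = 0$ under the hypothesis $\zeta_\ell \notin k$. Hence $\mathrm{sp}(P) \neq 0$ and Proposition \ref{split toric formula} gives $c(A)/c(J) = 1/\ell$. For $H_2 \simeq \mu_\ell$: the composition $H_2 \hookrightarrow J[\ell] \twoheadrightarrow \Lambda/\ell\Lambda$ is a $\Gal(\bar k/k)$-equivariant map from $\mu_\ell$, on which Galois acts by the nontrivial cyclotomic character, into $(\Z/\ell\Z)^g$, on which Galois acts trivially; such a map must be zero. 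Hence $H_2 \subset \mu_\ell^g$, its generator $Q$ satisfies $\mathrm{sp}(Q) = 0$, and Proposition \ref{split toric formula} gives $c(B)/c(J) = \ell$.

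I do not anticipate any real obstacle: the Galois-module distinction between $\mu_\ell$ and $\Z/\ell\Z$ forced by $\zeta_\ell \notin k$ pins down which side of the short exact sequence each subgroup lies in, and Proposition \ref{split toric formula} supplies the rest. The only point requiring care is to justify that the uniformization gives the stated Galois structure on $J[\ell]$, which is standard for split totally toric reduction.
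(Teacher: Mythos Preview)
Your proposal is correct and follows essentially the same approach as the paper: both use the exact sequence $0\to \mu_\ell^g \to J[\ell] \to (\Z/\ell\Z)^g \to 0$ arising from split toric reduction, observe that $\zeta_\ell\notin k$ forces $H_2\subset\mu_\ell^g$ and $H_1\not\subset\mu_\ell^g$, and then invoke Proposition~\ref{split toric formula}. Your write-up is somewhat more careful than the paper's (you explicitly verify the unramified-field-of-definition hypothesis for the generator of $H_2$, and spell out the Galois-module reasons for where each $H_i$ lands), but the substance is the same.
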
 
\begin{proof}
Since $J$ has split toric reduction, we have an exact sequence of group schemes
\[0\to \mu_\ell^g \to J[\ell] \to (\Z/\ell\Z)^g \to 0.\]
Since $\zeta \notin k$, we have $\mu_\ell \not\simeq \Z/\ell\Z$.  Hence $H_2$ lies in $\mu_\ell^g$ whereas $H_1$ does not.  In other words, $H_2$ specializes to the identity component and $H_1$ does not. By Proposition \ref{split toric formula}, we have the desired formula for the Tamagawa ratios.
\end{proof}
\begin{rmk}
In Section \ref{examples}, we use Corollary \ref{p=2} when $k = \Q_2$ and $\ell = 3$. This allows us to avoid delicate  computations of integral models over $\Z_2$.   
\end{rmk}

Next, we apply Proposition \ref{split toric formula} to the Jacobian $J_{a,b,c}$ in various cases of non-degenerate reduction (and hence semistable reduction, by Theorem \ref{semistable reduction}).  For simplicity we take $k = \Q_p$, so that $a,b,c \in \Z_p$.  Thus $C_{a,b,c}$ has $h$-reduction over $\Q_p$ if $p \mid h(a,b,c)$ and $p \nmid h'(a,b,c)$ for all other discriminant factors $h' \neq h$.

The explicit models (\ref{E:f_abc}) for $C_{a,b,c} \colon y^2 = f_{a,b,c}(x)$ differ slightly from those in the introduction, so let us describe the divisor classes $D_1$ and $D_2$ in $J[3]$ in this model.  Write $D_1 = P_1 +P'_1 - \kappa$, with $\kappa$ an effective canonical divisor and $P_1,P'_1$ points over the algebraic closure.  If $P_1 = (x_1,y_1)$ and $P'_1 = (x'_1, y'_1)$,
then $x_1$ and $x'_1$ are the two roots of $H_1(x)$, and $y_1 = q_4^{-1}G_1(x_1)$ and $y'_1 = q_4^{-1}G_1(x'_1)$. Similarly, if $D_2 = P_2 + P'_2 - \kappa = (x_2,y_2) + (x'_2,y'_2) - \kappa$, then $x_2$ and $x'_2$ are the roots of $H_2(x)$, while $y_2 = (3c^2\sqrt{-3})^{-1}G_2(x_2)$ and $y_2' = (3c^2\sqrt{-3})^{-1}G_2(x_2')$.  Here, $\sqrt{-3}$ is a fixed choice of square root.    

\subsection{Tri-nodal reduction}\label{trinode}
Assume first that $J$ has $a$-reduction, and let $m = v_p(a) > 0$.  Then $J$ has split toric reduction over $\Q_p$, by Table \ref{non-degenerate reduction}.  Since $q_4 \in \Z_p^\times$, an integral model  for $C$ over $\Z_p$ is
\[\Cnaive \colon y^2 = q_4^2f_{a,b,c}(x) = G_1(x)^2 + \lambda_1H_1(x)^3.\] 
Let $\Cmin/\Z_{p}$ be the minimal regular model.  Since $a \mid \lambda_1$, the special fiber of $\Cnaive$ has two components $F_1$ and $F_2$ with equations $y = G_1(x)$ and $y = -G_1(x)$, and which intersect at the three nodes.  Then $\Cmin$ is obtained from $\Cnaive$ by replacing each of the three nodes with a chain of $m-1$ rational curves $X_1,\ldots, X_{m-1}$, $Y_1,\ldots, Y_{m-1}$, and $Z_1, \ldots, Z_{m-1}$ connecting $F_1$ to $F_2$; see \cite[Fig.\ 3]{Muller-Stoll}.  
If $m = 1$, the two components $F_1$ and $F_2$ intersect each other transversally in the three nodes.   

\begin{rmk}
When $C$ has semistable reduction, Raynaud's theorem \cite[9.5 Thm.\ 5]{BLR} says that the N\'eron model of $J$ represents $\mathrm{Pic}^0_{\Cmin/\Z_{p}}$.  This gives a convenient way to evaluate the specialization map $\mathrm{sp} \colon J(\Q_p) \to \Phi_J(\FF_p)$ on a divisor class $D$.  Namely, write $D = \sum n_i P_i$, with each $P_i$ point reducing to a unique component $C_i$ of $\Cmin_{\FF_p}$.  After identifying $\Phi_J(\FF_p)$ with the divisor class group of the dual graph of $\Cmin$ \cite[9.6 Thm.\ 1]{BLR}, we have $\mathrm{sp}(D) = \sum n_i C_i$.
\end{rmk}

\begin{prop}\label{tra}
If $p  > 3$ and $J$ has $a$-reduction over $\Q_p$, then 
\[c(A)/c(J )=\frac13 \hspace{3mm}\mbox{ and } \hspace{3mm} c(B)/c(J )= 3.\]
\end{prop}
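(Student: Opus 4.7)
The plan is to apply Proposition~\ref{split toric formula} to the two $3$-isogenies $J\to A$ and $J\to B$. By Theorem~\ref{semistable reduction}, for $a$-reduction the Jacobian $J$ has split totally toric reduction of type $[I_{m-m-m}]$ with $m=v_p(a)$, so the hypotheses of Proposition~\ref{split toric formula} are satisfied; what remains is to compute whether $\mathrm{sp}(D_1)$ and $\mathrm{sp}(D_2)$ vanish in $\Phi$. I would first describe the integral model $Y^2=G_1(x)^2+\lambda_1H_1(x)^3$: since $a\mid\lambda_1=4act_3$, we have $\lambda_1\equiv 0\pmod p$, and the special fiber reduces to $Y^2=G_1(x)^2$, splitting into two components $F_1\colon Y=G_1(x)$ and $F_2\colon Y=-G_1(x)$ meeting in three nodes (at the roots of $G_1\bmod p$), each resolved by a chain of $m-1$ rational curves. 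The component group $\Phi\cong\Pic^0(\Gamma)$ is thereby identified with $\ZZ/m\ZZ\times\ZZ/3m\ZZ$, in agreement with Table~\ref{non-degenerate reduction}.

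For $D_1=P_1+P_1'-\kappa$, taking $\kappa=\infty_++\infty_-$: the integral-model $Y$-coordinates of $P_1,P_1'$ are $+G_1(x_1),+G_1(x_1')$ by construction, so both points reduce to $F_1$, while $\infty_+,\infty_-$ reduce to $F_1,F_2$ respectively. Hence $\mathrm{sp}(D_1)=[F_1]-[F_2]$. I would then verify this class is nonzero of order $3$ in $\Phi$ by a direct graph-Laplacian argument: any integer-valued function $g$ on $\Gamma$ satisfying $\Delta g=[F_1]-[F_2]$ would be forced to have $g(F_1)-g(F_2)=\tfrac{1}{3}$ by the threefold symmetry among the chains connecting $F_1$ and $F_2$, which is absurd. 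Thus $\mathrm{sp}(D_1)\neq 0$, and Proposition~\ref{split toric formula} yields $c(A)/c(J)=1/3$.

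For $D_2$, which is defined over the unramified extension $\Q_p(\sqrt{-3})$ (since $p\neq 3$), I would show that $P_2,P_2'$ reduce to \emph{opposite} components, so that $\mathrm{sp}(D_2)=[F_1]+[F_2]-[F_1]-[F_2]=0$. When $\zeta_3\notin\Q_p$ this is immediate from Corollary~\ref{p=2}: the subgroup $\langle D_2\rangle\simeq\mu_3$ must embed into the toric part $J[3]^0=\mu_3^2$ by Galois equivariance, since no nontrivial Galois-equivariant map from $\mu_3$ to the constant étale quotient $(\ZZ/3\ZZ)^2$ exists. When $\zeta_3\in\Q_p$, the abstract argument fails and one must verify the sign directly: reducing the identity $27c^4(G_1^2+\lambda_1H_1^3)=-q_4^2(G_2^2+\lambda_2H_2^3)$ modulo $p$ and modulo $H_2(x)$ (and noting that $G_2(x)$ becomes linear modulo $H_2$ at $a\equiv 0$) forces $Y_2Y_2'\equiv -G_1(x_2)G_1(x_2')\pmod p$, whence the two $\pm$ signs in $Y_i\equiv\pm G_1(x_i)$ must differ. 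This gives $\mathrm{sp}(D_2)=0$ and $c(B)/c(J)=3$ by Proposition~\ref{split toric formula}. The main obstacle is this last sign verification in the split case $p\equiv 1\pmod 3$, which requires manipulating the explicit sextic identity rather than appealing to Galois-module rigidity.
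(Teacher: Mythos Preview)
Your proposal follows the same overall strategy as the paper: apply Proposition~\ref{split toric formula} by determining whether $\mathrm{sp}(D_1)$ and $\mathrm{sp}(D_2)$ vanish in $\Phi$, using the explicit description of the special fiber of the naive model $Y^2=G_1^2+\lambda_1H_1^3$. Two of your variations are improvements: invoking Corollary~\ref{p=2} when $\zeta_3\notin\Q_p$ disposes of \emph{both} Tamagawa ratios at once (not just $c(B)/c(J)$), which is cleaner than the paper's uniform resultant approach; and your Laplacian/effective-resistance argument for $[F_1]-[F_2]\neq 0$ is more conceptual than the paper's direct manipulation of the relations~(\ref{graph relations}).

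There is, however, a genuine omission. You assert that $P_1,P_1'$ reduce to $F_1$ ``by construction'' because their $Y$-coordinates are $+G_1(x_1),+G_1(x_1')$. This only tells you \emph{which} of $F_1,F_2$ they land on \emph{provided} they reduce to smooth points of the naive special fiber. If $G_1(x_1)\equiv 0\pmod p$ (i.e.\ if $H_1$ and $G_1$ share a root modulo $p$), then $P_1$ reduces to a node and on $\Cmin$ lands on one of the exceptional components, not on $F_1$. The paper rules this out by computing $y_1y_1'=\Res_x(H_1,G_1)h_1^{-3}\equiv c^2t_4\not\equiv 0$, using the non-degeneracy hypothesis. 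The same issue arises for $P_2,P_2'$: before comparing signs you need to know they avoid the nodes, which the paper checks via $\Res_x(G_2,H_2)h_2^{-3}\equiv 3^3c^6q_3q_4^{-1}\not\equiv 0$. Your sign computation in the case $\zeta_3\in\Q_p$ is otherwise on the right track and matches the paper's resultant identity $\tfrac{1}{-27c^4}\Res_x(H_2,G_2)\equiv -q_4^{-2}\Res_x(H_2,G_1)$, but the remark about $G_2$ becoming linear modulo $H_2$ does not by itself pin down the sign; one really must carry out the resultant (or equivalent) computation.
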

\begin{proof}
For polynomials $F,G \in \Q[x]$, we denote the resultant by $\mathrm{Res}_x(F,G)$.  Recall $D_1 = P_1 + P_1' - K$, with $P_1 = (x_1,y_1)$ and $P'_1 = (x'_1,y'_1)$.  Then $y_1y'_1 = \mathrm{Res}_x(H_1,G_1)h_1^{-3}$, where $h_1$ is the leading coefficient of $H_1(x)$.  Over $\FF_p$ (so that $a \equiv 0$), we compute $h_1 \not\equiv 0$, and it follows that $y_1$ and $y'_1$ are integral over $\Z_p$.  Again over $\FF_p$, 
\[y_1y'_1 = \mathrm{Res}_x(H_1,G_1)h_1^{-3} \equiv   c^2 t_4 \not\equiv 0,\]
which shows that $y_1$ and $y'_1$ are in fact $p$-adic units.  

This means that $P_1$ and $P_1'$ reduce to non-nodes on the special fiber of $\Cnaive$, and hence reduce to either $F_1$ or $F_2$ on $\Cmin$.  By their very definition, they reduce to the {\it same} component, so without loss of generality we may assume they reduce to $F_1$.  We may represent $K$ by a degree two divisor which does not intersect the nodes.  Then $K$ reduces to $[F_1 + F_2]$ since it is an orbit of the hyperelliptic involution.  We deduce that 
\[\mathrm{sp}(D_1) = [2F_1 - F_1 - F_2] = [F_1 - F_2].\]
By Proposition \ref{split toric formula}, to prove that $c(A)/c(J) = 1/3$ it is enough to check that $[F_1- F_2]$ specializes to a non-trivial element of the component group $\Phi_J(\FF_p)$.
Recall from \cite[9.6.10]{BLR} that $\Phi_J(\FF_p)$ is isomorphic to the group of degree-zero divisors on the dual graph of the special fiber, modulo the subgroup generated by the relations 
\begin{equation}\label{graph relations}
\deg(F) [F] - \sum_{F'\neq F} (F' \cdot F) [F']= 0,
\end{equation}
for any component $F$.
    
Suppose that $[F_1] - [F_2] = 0$ in the component group.
It follows from the structure of the special fiber and (\ref{graph relations}) that the component group is generated by pairwise differences amongst the four components $[X_{m-1}], [Y_{m-1}], [Z_{m-1}]$ and $[F_2]$.  By Theorem \ref{semistable reduction}, the component group is isomorphic to $\Z/m\Z \times \Z/3m\Z$.  We will reach a contradiction by showing that these pairwise differences are $m$-torsion in the component group; since these generate the group, this would mean that there is no point of order $3m$.

For this, we compute
\begin{align*}[F_2] = [F_1] &= 2[X_1] - [X_2] \\
&= 4[X_2] - 2[X_3] - [X_2] = 3[X_2] - 2[X_3]\\
&= 6[X_3] - 3[X_4] - 2[X_3]= 4[X_3] - 3[X_4]\\
&\vdots\\
& = (m-1)[X_{m-2}] - (m-2)[X_{m-1}]\\
&= m[X_{m-1}] - (m-1)[F_2],  
\end{align*}
and so $m[F_2] - m[X_{m-1}] = 0$. Simlarly, $m([F_2] - [Y_{m-1}]) = m([F_2] - [Z_{m-1}]) = 0$.  It follows that the component group is generated by $m$-torsion elements, and hence is itself $m$-torsion, which is a contradiction.  Thus $[F_1] - [F_2]$ is not the identity in the component group, and by Proposition \ref{split toric formula}, $c(A)/c(J) = 1/3$.  

Next we consider the divisor class $D_2 = P_2 + P_2' - K = (x_2,y_2) + (x_2',y_2') - K$.  Over $\FF_p$,  
\[\Res_x(G_2,H_2)h_2^{-3} \equiv 3^3  c^6 q_3q_4^{-1},\]
which is non-zero. This shows that $P_2$ and $P_2'$ reduce to smooth points on the (naive) special fiber.  The question is whether they reduce to the same component.  We compute
\[\frac{1}{\minus27c^4}\Res_x(H_2,G_2)\equiv -q_4^{-2}\Res_x(H_2,G_1),\]
which means that $y_2y_2' \equiv -G_1(x_2)G_1(x_2')$. It follows that $P_2$ and $P_2'$ reduce to different components, and hence $\mathrm{sp}(D_2) = [F_1+F_2] - [F_1 + F_2] = 0$. By Proposition \ref{split toric formula}, we have $c(B)/c(J) = 3$.           
\end{proof}

When $C$ has $q_3$-reduction, $J$ has toric reduction with torus character $-3$, by Table \ref{non-degenerate reduction}. To see interesting Tamagawa ratios, we twist by this character. 
\begin{prop}\label{trq3}
If $p  > 3$ and $J$ has $q_3$-reduction, 
then  
\[c(A_{\minus3})/c(J_{\minus3}) =3 \hspace{3mm} \mbox{ and } \hspace{3mm} c(B_{\minus3})/c(J_{\minus3})= \frac{1}{3}.\]
\end{prop}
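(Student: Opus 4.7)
The plan is to deduce this proposition from Proposition~\ref{tra} via the twist symmetry in Proposition~\ref{twist} and Corollary~\ref{AtwistB}. Set
\[(a', b', c') = \bigl(q_3(a,b,c),\ 3b(a+b) - q_3(a,b,c),\ q_4(a,b,c)\bigr).\]
By Proposition~\ref{twist} we have a $\Q_p$-isomorphism $C_{abc,\minus3} \simeq C_{a',b',c'}$, and by Corollary~\ref{AtwistB} this twist exchanges the two $3$-isogenous quotients, so $A_{\minus3} \simeq B_{a',b',c'}$ and $B_{\minus3} \simeq A_{a',b',c'}$. Consequently
\[\frac{c(A_{\minus3})}{c(J_{\minus3})} = \frac{c(B_{a',b',c'})}{c(J_{a',b',c'})} \quad\text{and}\quad \frac{c(B_{\minus3})}{c(J_{\minus3})} = \frac{c(A_{a',b',c'})}{c(J_{a',b',c'})},\]
so Proposition~\ref{tra} applied to $C_{a',b',c'}$ would immediately yield the claimed values $3$ and $1/3$.

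The main task is therefore to verify that the hypothesis of $q_3$-reduction for $C_{abc}$ translates into $a'$-reduction for $C_{a',b',c'}$: that is, $p \mid a'$ while $p$ divides none of the other eight discriminant factors of $C_{a',b',c'}$. The condition $p \mid a' = q_3(a,b,c)$ is automatic, and the remaining linear checks are manageable: $c' = q_4$ is a unit by hypothesis; $a' - c' = q_3 - q_4 = 3b(a-c)$ is a unit since $p > 3$ and $p$ divides neither $b$ nor $a-c$; and $b' \equiv 3b(a+b) \pmod{q_3}$, so if $p$ were to divide $b'$, the identity $q_3(a,\minus a,c) = (a-c)^2$ would force the reduction of $(a:b:c)$ to be $(1:\minus1:1)$, a point excluded by Lemma~\ref{L:disc_sing}.

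I expect the main obstacle to be verifying non-vanishing modulo $p$ of the four higher-degree discriminant factors $q_3(a',b',c')$, $t_3(a',b',c')$, $q_4(a',b',c')$, and $t_4(a',b',c')$. My plan is to substitute the parametrization $(a:b:c) = (3t^2 : s^2 + 4st + t^2 : s^2 + st + t^2)$ of $\{q_3 = 0\}$ used in the proof of Theorem~\ref{semistable reduction} into each factor, and factor the resulting polynomial in $s,t$. For instance, the congruence $q_3(a',b',c') \equiv (b'-c')^2 \pmod{a'}$ reduces the first check to non-vanishing of $b' - c' \equiv 3b(a+b) - q_4$; a direct calculation gives $3b(a+b) - q_4 = 9t(s+2t)(s^2 + 4st + t^2)$ along the parametrization, and each of its zeros in $(s:t)$ maps to one of the points $(0:1:1)$, $(1:\minus1:1)$, or $(2 \pm \sqrt{3}:0:1)$, all appearing in Lemma~\ref{L:disc_sing} and hence excluded. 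The three remaining verifications proceed in the same manner: each extra vanishing on $\{q_3 = 0\}$ is shown to lie in the set of $18$ degenerate points.

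Once these non-vanishing conditions are confirmed, Proposition~\ref{tra} applies to $C_{a',b',c'}$ and yields the claimed Tamagawa ratios after substitution.
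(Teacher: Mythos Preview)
Your approach via the twist symmetry is correct in outline, but it is a genuinely different route from the paper's. The paper argues directly: since the torus character for $q_3$-reduction is $-3$ (Table~\ref{non-degenerate reduction}), the twist $J_{-3}$ has split toric reduction, and one applies Proposition~\ref{split toric formula} to $J_{-3}$ exactly as in the proof of Proposition~\ref{tra}, only now working with the model $-3y^2 = G_2(x)^2 + \lambda_2 H_2(x)^3$. Because $\lambda_2 = -4q_3 t_3$ is divisible by $p$, the special fiber is $-3y^2 = G_2(x)^2$, and it is $D_2$ (built from $H_2,G_2$) that lands on a single component while $D_1$ splits across two; hence the roles of $A$ and $B$ are interchanged. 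This costs only a pair of resultant computations parallel to those already carried out for Proposition~\ref{tra}.

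Your route trades those resultant checks for the verification that $(a':b':c')$ has non-degenerate $a$-reduction. That verification is valid as you sketch it, but you can shorten it considerably: by the remark following Lemma~\ref{L:disc_sing}, non-degenerate reduction is equivalent to the reduction of $(a':b':c')$ avoiding the $18$ listed points. Since $a'\equiv 0$ and $c'=q_4$ is a unit, only the four points $(0{:}0{:}1)$, $(0{:}1{:}1)$, $(0{:}\zeta{:}1)$, $(0{:}\zeta^2{:}1)$ are relevant, so it suffices to show $b'\not\equiv 0$ and $b'^3\not\equiv c'^3\pmod p$. This collapses your four separate higher-degree factor checks into a single computation along the $q_3$-parametrization. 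Either way, the paper's direct approach is slightly cleaner here, while your symmetry argument has the appeal of explaining \emph{why} the $q_3$-case mirrors the $a$-case.
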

\begin{proof}
The abelian surface $J_{\minus3}$ has split toric reduction, so we may apply Proposition \ref{split toric formula}.  The proof is then very similar to the case of $a$-reduction, except now $p \mid \lambda_2$, so the roles of $D_1$ and $D_2$ are reversed.  We omit the details.
\end{proof}      

Next we consider non-degenerate $t_3$-reduction. If $t_3 = q_3(a,\zeta b,c)q_3(a,\zeta^2b,c)$ vanishes in $\FF_p$, then we must have $\zeta \in \FF_p$. Indeed, non-degenerate reduction implies $(a \colon b \colon c)$ lies on the smooth locus of the union of the two conics.  Thus the equation must factor over $\FF_p$ and hence $\zeta \in \FF_p$.  
So the torus character is trivial and there is no need to twist.  
\begin{prop}\label{trq3}
Let $p  > 3$, and assume $J = J_{a,b,c}$ has $t_3$-reduction.  Then 
\[c(A)/c(J )=\frac{1}{3}  = c(B)/c(J ).\]
\end{prop}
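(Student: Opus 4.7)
The plan is to mirror the structure of Proposition~\ref{tra}, exploiting a feature special to the $t_3$-reduction case: both constants $\lambda_1=4act_3$ and $\lambda_2=-4q_3t_3$ vanish modulo~$p$ simultaneously. My first step will be to show that no quadratic twist is needed here. Since non-degenerate $t_3$-reduction requires the factor $Q(a,\zeta b,c)$ of $t_3$ to cut out a smooth point over $\FF_p$, we must already have $\zeta\in\FF_p$, and hence $\sqrt{-3}\in\FF_p^\times$ as $p>3$. The torus character $-3$ from Table~\ref{non-degenerate reduction} thus represents the trivial class in $\FF_p^\times/\FF_p^{\times 2}$, so $J$ itself has split totally toric reduction and Proposition~\ref{split toric formula} applies directly to both isogenies $J\to A$ and $J\to B$, reducing the problem to showing that $D_1$ and $D_2$ each specialize nontrivially into $\Phi_J(\bar\FF_p)$.

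Next, I would handle $D_1$ by rerunning the argument of Proposition~\ref{tra} essentially verbatim. The model $y^2=G_1^2+\lambda_1H_1^3$ reduces to $y^2\equiv G_1(x)^2$ because $p\mid\lambda_1$, giving two rational components $F_1\colon y=G_1(x)/q_4$ and $F_2\colon y=-G_1(x)/q_4$ meeting at the three roots of $G_1$. A direct computation of $\Res_x(H_1,G_1)$ modulo one quadratic factor of $t_3$, analogous to the residue $c^2t_4$ in the $a$-reduction case, should show that $y_1y_1'$ is a unit, so $P_1$ and $P_1'$ reduce to smooth points on the common component $y=G_1(x)/q_4$. Representing $\kappa$ by a generic divisor yields $\mathrm{sp}(D_1)=[F_1]-[F_2]$, and the dual-graph argument of Proposition~\ref{tra}, which depends only on the type $[I_{m-m-m}]$ and the structure $\Z/m\Z\times\Z/3m\Z$ of $\Phi$ from Table~\ref{non-degenerate reduction}, shows that this class is nontrivial. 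Hence $c(A)/c(J)=1/3$.

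The genuinely new step compared with the $a$-reduction case is that the second model $-3y^2=G_2^2+\lambda_2H_2^3$ now also admits the factored structure, because $p\mid t_3\mid\lambda_2$. Repeating the same analysis -- two components $-3y^2\equiv G_2(x)^2$, computing $\Res_x(H_2,G_2)$ modulo the relevant factor of $t_3$ and checking that it is a unit, then verifying that $P_2$ and $P_2'$ land on a single component -- will yield $\mathrm{sp}(D_2)\neq 0$, so $c(B)/c(J)=1/3$ by a second appeal to Proposition~\ref{split toric formula}. This is the conceptual reason the two Tamagawa ratios coincide in this case, in contrast to the $a$- and $q_3$-reduction cases where only one of the two $\lambda_i$ vanishes and the ratios end up being reciprocals of each other.

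The main obstacle I anticipate is the two resultant verifications, in particular checking that $\Res_x(H_2,G_2)$ is a unit modulo each of $Q(a,\zeta b,c)$ and $Q(a,\zeta^2 b,c)$, and is not accidentally divisible by any other discriminant factor listed in Section~\ref{subsec:discfactors}. This is a routine but delicate symbolic calculation that has to be carried out over $\Z[\zeta]$, since the two conjugate factors of $t_3$ are not individually defined over $\Z$. Once these unit checks are in hand, the component-group step transfers without change from the structural argument already established in Proposition~\ref{tra}.
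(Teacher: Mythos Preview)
Your proposal is correct and follows essentially the same approach as the paper's proof, which is itself just two sentences: the key observation is that $p\mid t_3$ forces $p$ to divide \emph{both} $\lambda_1=4act_3$ and $\lambda_2=-4q_3t_3$, so the resultant-and-component argument of Proposition~\ref{tra} applies symmetrically to each of $D_1$ and $D_2$, yielding nonzero specialization in both cases. Your elaboration of why no twist is needed and your identification of the resultant checks over $\Z[\zeta]$ as the only substantive verification are exactly right; the paper simply absorbs all of this into the phrase ``by a similar computation.''
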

\begin{proof}
In this case $p$ divides {\it both} $\lambda_1$ and $\lambda_2$.  By a similar computation, we find that both $D_1$ and $D_2$ specialize to non-zero elements of the component group. 
\end{proof}

\subsection{$b$-reduction}
Assume now that $J$ has $b$-reduction, and $b$ has valuation $m > 0$.  Let $\Cnaive/\Z_p$ be the curve defined by $y^2 = f_{a,b,c}(x)$ and let $\Cmin/\Z_p$ be the minimal regular model.  The special fiber of $\Cnaive$ is connected, with a single component $F$ intersecting itself at two nodes.  The special fiber of $\Cmin$ is obtained from that of $\Cnaive$ by replacing one node with a chain of $3m-1$ rational curves $X_1,\ldots, X_{3m-1}$, and replacing the other node with a chain of $6m-1$ rational curves $Y_1,\ldots, Y_{6m-1}$.  Each of the two chains connect $F$ to itself, as in \cite[Fig.\ 2]{Muller-Stoll}.
 
From the relations (\ref{graph relations}), we deduce that $\Phi$ is generated by $[X_1 - F]$ and $[Y_1 - F]$.  Moreover, 
\[[X_i - F] = i[X_1 - F] \mbox{ for } i = 1,\ldots, 3m-1,\] 
and 
\[[Y_i - F] =i[Y_1 - F] \mbox{ for } i = 1, \ldots, 6m-1.\]  
We also deduce $3m[X_1 -F] = 0$ and $6m[Y_1 - F] = 0$.    Since $\Phi \simeq \Z/3m\Z \times \Z/6m\Z$, by Theorem \ref{semistable reduction}, the elements $[X_1 - F]$ and $[Y_1 - F]$ must have order $3m$ and $6m$, respectively.  In particular, the elements $[X_i - F] + [Y_j - F]$ are all non-zero. 

\begin{prop}\label{trb}
Let $p  > 3$, and assume $J = J_{a,b,c}$ has non-degenerate $b$-reduction.  Then 
\[c(A)/c(J) = \frac{1}{3} \hspace{4mm} \mbox{and} \hspace{4mm} c(B)/c(J) = 3.\]
\end{prop}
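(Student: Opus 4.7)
The plan mirrors the proof of Proposition~\ref{tra}. By Table~\ref{non-degenerate reduction} the Jacobian $J$ has split totally toric reduction at $p$, so Proposition~\ref{split toric formula} reduces the claim to verifying that $\mathrm{sp}(D_1)\neq 0$ and $\mathrm{sp}(D_2)=0$ in the component group $\Phi \simeq \ZZ/3m\ZZ\oplus \ZZ/6m\ZZ$.

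First I would locate the reductions of $P_i, P_i'$ on the naive special fiber. Reducing modulo $b$ gives $H_1\equiv -q_4 x-1$, so one root $x_1$ reduces to the finite node at $x=-1/q_4$ while the other has valuation $-m$ and reduces to the node at infinity in the projective closure; the individual points $P_1, P_1'$ are $\QQ_p$-rational since $\disc(H_1)=q_4(q_4+4b(a-c))$ is a unit square by Hensel. In contrast, $H_2$ remains a non-degenerate quadratic at $b=0$ with unit roots, and a resultant check shows $G_2(x_2)\not\equiv 0\pmod p$, so the relation $y_2^2=-G_2(x_2)^2/(27c^4)$ implies that $P_2, P_2'$ specialize to smooth points of the unique naive component $F$. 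Taking $\kappa=\infty^++\infty^-$, a short local computation at the node at infinity shows $\mathrm{sp}(\kappa)=2[F]$ in $\Phi$ (the two points $\infty^\pm$ land on components $Y_m$ and $Y_{r_2-m}$ of the chain, and $(r_2-m)+m\equiv 0\pmod{r_2}$), so $\mathrm{sp}(D_2)=0$.

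For $D_1$, the decisive observation is that both $P_1$ and $P_1'$ specialize to nodes of $\Cnaive$, so on $\Cmin$ each lies on an \emph{interior} component of the corresponding chain: in local coordinates $(u,v)$ with $uv=\pi^{r_k}\cdot\mathrm{unit}$ at each node we have $v_p(u(P)), v_p(v(P))\geq 1$, forcing reduction onto some $X_i$ with $1\leq i\leq r_1-1$ and some $Y_j$ with $1\leq j\leq r_2-1$. Writing $\mathrm{sp}(D_1)=(i,j)$ in the $\ZZ/r_1\ZZ\oplus\ZZ/r_2\ZZ$ basis, the fact that $D_1$ is 3-torsion forces both $i$ and $j$ to be nonzero multiples of $r_1/3$ and $r_2/3$ respectively, hence $\mathrm{sp}(D_1)$ projects nontrivially to each summand of $\Phi[3]\simeq(\ZZ/3\ZZ)^2$. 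Proposition~\ref{split toric formula} then gives the stated Tamagawa ratios.

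The main obstacle is the detailed valuation bookkeeping in the local analysis at the two nodes—specifically verifying that the critical values of $f(x)$ at the finite node and of $\tilde f(z) = z^6 f(1/z)$ at $z=0$ have $p$-adic valuations matching an $A_{r_k-1}$-singularity, and that the specializations of $\infty^\pm$ realize the cancellation $\mathrm{sp}(\kappa)=2[F]$. A pleasant feature of the argument above is that it does not require identifying which of the two values $\{3m,6m\}$ is $r_1$ (versus $r_2$); the conclusion follows as long as each chain length is divisible by~$3$, which is automatic.
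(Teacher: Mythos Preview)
Your argument is correct and follows the same overall strategy as the paper: reduce via Proposition~\ref{split toric formula} to showing $\mathrm{sp}(D_1)\neq 0$ and $\mathrm{sp}(D_2)=0$, locate the $P_i,P_i'$ on the naive special fiber, and read off the conclusion from the structure of $\Phi$. There are, however, two places where the paper argues more simply.

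First, for the canonical divisor the paper does not use $\infty^++\infty^-$ at all; it simply represents $K$ by a hyperelliptic orbit away from the nodes, so that $K$ reduces to $2[F]$ trivially. Your route through the node at infinity is valid (the hyperelliptic involution sends $Y_k\mapsto Y_{r_2-k}$, so the two contributions cancel in $\Phi$ regardless of the specific index $k$), but the ``short local computation'' and the particular value $k=m$ are unnecessary detours.

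Second, for $\mathrm{sp}(D_1)\neq 0$ the paper avoids the $3$-torsion constraint entirely. It first verifies that $[X_1-F]$ and $[Y_1-F]$ have exact orders $3m$ and $6m$, hence $\Phi=\langle[X_1-F]\rangle\oplus\langle[Y_1-F]\rangle$; consequently \emph{every} element $[X_i-F]+[Y_j-F]$ with $1\le i\le r_1-1$ and $1\le j\le r_2-1$ is nonzero, and no further information about $i,j$ is needed. Your use of $3D_1=0$ to force $i\in\{r_1/3,2r_1/3\}$ and $j\in\{r_2/3,2r_2/3\}$ is a nice alternative that yields slightly more (nontrivial projection to both factors of $\Phi[3]$), but it is not required for the Tamagawa-ratio conclusion.
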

\begin{proof}
Recall $D_1 = P_1 + P_1' - K$ and $D_2 = P_2 + P_2' - K$. The homogenized polynomial $H_1(x,z)$ has two distinct roots in $\mathbb{P}^1(\FF_p)$, corresponding to the distinct nodes.  These are also roots of $G_1(x,z)$.  Hence $P_1$ and $P_1'$ specialize to different nodes on $\Cnaive/\FF_p$.  Thus, 
\[\mathrm{sp}(D_1) = [X_i - F] + [Y_i - F] \neq 0.\]  By Proposition \ref{split toric formula}, we have $c(A)/c(J) = 1/3$.  

On the other hand, a resultant computation, as in the proof of Proposition \ref{tra}, shows that the $y$-coordinates of $P_2$ and $P_2'$ are not divisible by $p$. So $P_2$ and $P_2'$ specialize to non-singular points on $\Cnaive/\bar\FF_p$.  Thus 
\[\mathrm{sp}(D_2) = [F + F - 2F]  = 0 \in \Phi.\]
By Proposition \ref{split toric formula}, we have $c(B)/c(J) = 3$.  
\end{proof}      
      
 \subsection{Remaining cases}\label{remain}
 The remaining cases of binodal reduction can be deduced from the trinodal cases, using the Richelot isogeny of Proposition \ref{P:richelot}.
 
 \begin{prop}\label{binode}
Assume $p  > 3$ and let $J = J_{a,b,c}$. 
\begin{enumerate}
\item If $J$ has $c$-reduction, then $c(A)/c(J) = \frac13$ and $c(B)/c(J) = 3$.
\item If $J$ has $q_4$-reduction, then  $c(A_{\minus3})/c(J_{\minus3}) = 3$ and $c(B_{\minus3})/c(J_{\minus3}) = \frac13$.
\item If $J$ has $t_4$-reduction, then $c(A)/c(J) = c(B)/c(J) = \frac13$. 
\end{enumerate}
\end{prop}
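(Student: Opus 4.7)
The plan is to reduce the three remaining cases to the trinodal results (Propositions~\ref{tra} and~\ref{trq3}) via the symmetry $(a,b,c)\leftrightarrow(c,b,a)$. This involution permutes the higher-degree discriminant factors as $q_3\leftrightarrow q_4$ and $t_3\leftrightarrow t_4$, swaps $a\leftrightarrow c$, and fixes $b$ and (up to sign) $a-c$. Thus $C_{abc}$ has non-degenerate $c$-, $q_4$-, or $t_4$-reduction at $p$ precisely when $C_{cba}$ has non-degenerate $a$-, $q_3$-, or $t_3$-reduction at $p$, respectively.

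The central step is to promote Proposition~\ref{P:richelot} to $\Q$-isomorphisms of abelian varieties
\[
A_{abc}\simeq A_{cba} \quad\text{and}\quad B_{abc}\simeq B_{cba},
\]
and likewise for their quadratic twists. The isomorphism $J_{abc}\simeq J_{cba}$ furnished by Proposition~\ref{P:richelot} is induced by the endomorphism $(1+\sqrt{3})$ of $J_{abc}$. Since $\End(J_{abc})=\Z[\sqrt{3}]$ is commutative, this isomorphism commutes with $\sqrt{3}$, and hence restricts to an $\FF_3$-linear, $\Gal(\overline{\Q}/\Q)$-equivariant isomorphism on $\sqrt{3}$-torsion. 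Because $\zeta_3\notin\Q$, the constant subgroup $\langle D_1\rangle\simeq\Z/3\Z$ and the cyclotomic subgroup $\langle D_2\rangle\simeq\mu_3$ are each the unique $\Q$-rational subgroup of their isomorphism type in $J[\sqrt{3}]$, so the isomorphism sends $\langle D_i^{abc}\rangle$ to $\langle D_i^{cba}\rangle$ for $i=1,2$. Taking quotients yields the desired isomorphisms, which persist under quadratic twisting and base change to $\Q_p$, giving equalities of Tamagawa numbers across the symmetry.

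Each case then follows by applying the corresponding trinodal result to $J_{cba}$ or $J_{cba,\minus3}$: Proposition~\ref{tra} handles (1); the $q_3$-case of Proposition~\ref{trq3} applied to $J_{cba,\minus3}$ handles (2); and the $t_3$-case of Proposition~\ref{trq3} handles (3). The main subtlety is the level-structure argument of the previous paragraph, verifying that the Richelot-induced isomorphism actually sends $\langle D_i^{abc}\rangle$ to $\langle D_i^{cba}\rangle$ rather than to some other order-$3$ subgroup; once this is established, no direct integral model computation is required for $c$-, $q_4$-, or $t_4$-reduction.
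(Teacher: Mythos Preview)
Your proof is correct and follows essentially the same route as the paper: both reduce the $c$-, $q_4$-, and $t_4$-cases to the $a$-, $q_3$-, and $t_3$-cases via the Richelot isogeny of Proposition~\ref{P:richelot}, using that this isogeny is $\Z[\sqrt{3}]$-equivariant (so carries $J_{abc}[\sqrt{3}]$ to $J_{cba}[\sqrt{3}]$) and is defined over $\Q$ (so respects the decomposition into $\Z/3\Z$ and $\mu_3$ pieces). Your justification of the level-structure compatibility via commutativity of $\Z[\sqrt{3}]$ and uniqueness of the $\Z/3\Z$- and $\mu_3$-subgroups is slightly more spelled out than the paper's, but the content is the same.
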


\begin{proof}
The Richelot isogeny $\pi \colon J_{a,b,c} \to J_{c,b,a}$ has kernel $J[1 - \sqrt{3}]$.  It follows that $\pi$ sends $J[\sqrt{3}]$ to $J_{c,b,a}[\sqrt 3]$. 
We write $\langle D'_1\rangle$ for the subgroup of $J_{c,b,a}[\sqrt{3}]$ isomorphic to $\ZZ/3\ZZ$ and $\langle D'_2\rangle$ for the one isomorphic to $\mu_3$. Since $\pi$ is defined over the base field, we have that $\pi(\langle D_1\rangle)=\langle D_1'\rangle$ and that $\pi(\langle D_2\rangle)=\langle D_2'\rangle$.
Hence $J/\langle D_1\rangle \simeq J_{c,b,a}/\langle D_1'\rangle$ and similarly for $D_2$.  By (\ref{E:discfactors}), the involution $(a \colon b \colon c) \mapsto (c \colon b \colon a)$ exchanges $a$- and $c$-reduction, $q_3$- and $q_4$-reduction, and $t_3$- and $t_4$-reduction.  Thus, the proposition follows from the corresponding statements in Section \ref{trinode}. 
\end{proof}

The main results of this section (Propositions \ref{tra}-\ref{binode}) are summarized in Table \ref{table:tamagawa ratio}.

\begin{table}
\[ \renewcommand{\arraystretch}{1.4}
   \begin{array}{|r|c|c|} \hline
      h(a,b,c) & c(A_d)/c(J_d) & c(B_d)/c(J_d)   \\\hline
      a,b,c & \frac13   & 3         \\
      q_3,q_4 & 3  & \frac13        \\
      t_3,t_4 & \frac13 & \frac13        \\
\hline
    \end{array}
\]
\caption{Tamagawa ratios at primes $p > 3$ of non-degenerate reduction.  Here, $d$ is the unique squareclass for which $J_d$ has split toric reduction.  For the other three squareclasses, the Tamagawa ratios are 1.} \label{table:tamagawa ratio}
\end{table}

\section{Quadratic twists and elements of
  order 3 in $\Sha$}\label{twisting}
In this section we prove Theorem
\ref{sha}. Fix $r \geq 1$, and for  $C = C_{a,b,c}$ over $\Q$, let $J = J_{a,b,c}$ be its Jacobian.  We will always assume that the discriminant of $C_{a,b,c}$ is non-zero, so that $C_{a,b,c}$ is a smooth genus two curve.  Recall $\phi \colon J \to A$
has kernel $\langle D_1 \rangle \simeq \Z/3\Z$, while $\psi \colon J \to
B$ has kernel $\langle D_2 \rangle \simeq \mu_3$.  Twisting gives 3-isogenies $\phi_d \colon J_d \to A_d$ and $\psi_d \colon J_d
\to B_d$, for each squarefree integer $d$.  

The idea of the proof is to perform two independent $\sqrt{3}$-descents, one via $\phi_d$ and the other via $\psi_d$.  By imposing congruence conditions on $d$, and using the results of \cite{shnidman:RM}, we can guarantee both of the following properties.
\begin{itemize}
\item the $\psi_d$-descent yields a strong upper bound on the average rank of $J_d(\Q)$;
\item  the $\phi_d$-Selmer groups are systematically large.  
\end{itemize}
We use this to show that $\dim_{\FF_3}\Sha(A_d)[3]\geq 2r$, for a positive proportion of $d$.  To run the argument, we will use $r$ primes of $a$-reduction and $r$ primes of $t_3$-reduction.  For a given $(a \colon b \colon c)$, such primes may not exist, but there are many such primes for ``most" points $(a \colon b \colon c) \in \PP^2(\Z)$, as we consider points up to increasing height bounds.

We recall the {\it global Selmer ratio} $c(f)$, defined for any isogeny
$f \colon X \to Y$ of abelian varieties over $\Q$.
  First define the {\it local Selmer ratio} at a prime $p \leq
  \infty$ to be 
\[c_p(f) := \dfrac{\#\mathrm{coker} \left(X(\Q_p) \to Y(\Q_p)\right)}{\#\ker\left(
 X(\Q_p)\to Y(\Q_p)\right)}.\]
The global Selmer ratio is then defined as the product $c(f) = \prod_{p\leq \infty} c_p(f)$.

\begin{rmk}\label{Tamagawa ratio}
If $p \nmid \deg(f)\cdot \infty$, then $c_p(f) = c_p(Y)/c_p(X)$, the ratio of local Tamagawa numbers at $p$ \cite[Prop.\ 3.1]{shnidman:RM}. In particular, $c_p(f) = 1$ for all but finitely many primes $p$. 
\end{rmk}

Returning to the isogenies $\phi$ and $\psi$, we have the following definition and key results, which use the notion of $h$-reduction given at the beginning of Section \ref{subsec:discfactors}.  
\begin{defn}
We say $C_{a,b,c}$ has {$abc$-reduction} at $p$ if it has $h$-reduction with $h \in \{a,b,c\}$. 
We say $C_{a,b,c}$ has $q$-reduction at $p$ if it has $q_3$- or $q_4$-reduction.  
We say $C_{a,b,c}$ has {\it $t$-reduction} at $p$ if it has $t_3$- or $t_4$-reduction.  
\end{defn}

\begin{thm}\label{one or the other}
Suppose $C_{a,b,c}$ has at least $r$ primes $p > 3$ of $abc$-reduction and at least $r$ primes of $t$-reduction.  Then there exists $d \in \Z$ such that $c(\phi_d) = 3^{-2r}$ and $c(\psi_d) = 1$. 
\end{thm}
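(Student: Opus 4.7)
\begin{prf} (Proof sketch.)
The plan is to specify $d$ by prescribing its local quadratic class at each relevant prime and then invoke the Tamagawa-ratio calculations of Section~\ref{ratios} to evaluate $c(\phi_d)=\prod_p c_p(\phi_d)$ and $c(\psi_d)=\prod_p c_p(\psi_d)$ one place at a time. By Remark~\ref{Tamagawa ratio}, at any prime $p>3$ of good reduction for $J$ both factors are automatically $1$, so only finitely many places matter: the primes of bad reduction for $J$ together with $\{2,3,\infty\}$.

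Let $p_1,\dots,p_r$ be the chosen primes of $abc$-reduction and $q_1,\dots,q_r$ the chosen primes of $t$-reduction (all $>3$). At the $p_i$, the Jacobian $J$ has split totally toric reduction (trivial torus character), so $J_d$ retains split toric reduction iff $d$ is a square in $\Q_{p_i}^\times$. At the $q_j$, non-degenerate $t$-reduction forces $\zeta_3\in\FF_{q_j}$, i.e.\ $q_j\equiv 1\pmod 3$, so the torus character $-3$ already becomes trivial and again $J_d$ is split toric iff $d\in\Q_{q_j}^{\times 2}$. I would therefore \emph{require $d$ to be a square} at each of the $p_i$ and $q_j$. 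Table~\ref{table:tamagawa ratio} then yields
\[
\prod_{i=1}^{r} c_{p_i}(\phi_d)\cdot \prod_{j=1}^{r} c_{q_j}(\phi_d) \;=\; 3^{-r}\cdot 3^{-r}\;=\;3^{-2r},
\qquad
\prod_{i=1}^{r} c_{p_i}(\psi_d)\cdot \prod_{j=1}^{r} c_{q_j}(\psi_d) \;=\; 3^{r}\cdot 3^{-r}\;=\;1.
\]

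To prevent any other prime from contributing, I would kill the remaining local Selmer ratios by twisting $J$ to non-split toric reduction everywhere else. At each other prime $p>3$ of bad reduction for $J$, choose the class of $d$ in $\Q_p^\times/\Q_p^{\times 2}$ so that the torus character of $J_d$ is the \emph{non-trivial} class (for $abc$- or $t$-primes, take $d$ a non-square; for $q$-primes, twist so that $-3d$ is a non-square; primes of $(a-c)$-type give good reduction for $J$ and thus no condition). By Corollary~\ref{non-split toric formula}, both $c_p(\phi_d)$ and $c_p(\psi_d)$ equal $1$ at such primes.

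It remains to arrange $c_v(\phi_d)=c_v(\psi_d)=1$ at $v\in\{2,3,\infty\}$. The place $v=2$ is still a Tamagawa ratio (since $\ell=3\neq 2$), controlled by the class of $d$ in $\Q_2^\times/\Q_2^{\times 2}$, and the archimedean ratio is controlled by $\mathrm{sign}(d)$; in both cases one checks directly that some choice renders the ratio $1$. At $v=3$, the local Selmer ratio is not purely Tamagawa: it receives contributions from the kernels/cokernels of $\phi_d\colon J_d(\Q_3)\to A_d(\Q_3)$ and $\psi_d\colon J_d(\Q_3)\to B_d(\Q_3)$, which can be made trivial by imposing appropriate congruences on $d$ modulo a power of~$3$ (here one can borrow the formalism of local $3$-isogeny Selmer ratios developed in~\cite{shnidman:RM}). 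All the above conditions on $d$ are congruences modulo pairwise coprime moduli, so the Chinese Remainder Theorem produces an integer $d$ satisfying them simultaneously, and multiplying the local ratios proves $c(\phi_d)=3^{-2r}$ and $c(\psi_d)=1$.

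The main obstacle is the place $p=3$: away from it, everything reduces to table look-up plus quadratic reciprocity, whereas at $3$ one must produce a genuinely $\ell$-adic Selmer-ratio computation and verify that the condition on $d \pmod {3^k}$ is compatible with all other congruences, which it is since the latter involve only primes $\neq 3$.
\end{prf}
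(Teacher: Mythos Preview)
Your strategy at the primes $p_i,q_j$ and at generic bad primes is essentially the paper's, but there is a genuine gap at $v\in\{3,\infty\}$: you assert that ``some choice renders the ratio $1$'' at each of these places separately, and this is false at both.

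At $v=\infty$, since $\ker\phi\simeq\ZZ/3\ZZ$ and $\ker\psi\simeq\mu_3$ are non-isomorphic over $\RR$, one always has $\{c_\infty(\phi_d),c_\infty(\psi_d)\}=\{1,\tfrac13\}$; no choice of $\mathrm{sign}(d)$ makes \emph{both} equal to $1$. At $v=3$, a global parity argument (Poitou--Tate duality forces $\ord_3 c(\phi_d)\equiv\ord_3 c(\psi_d)\pmod 2$; combining with the known parities at all $p\neq 3$ yields $\ord_3 c_3(\phi_d)\not\equiv\ord_3 c_3(\psi_d)\pmod 2$) shows that $c_3(\phi_d)$ and $c_3(\psi_d)$ can never simultaneously be $1$ either. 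So the ``main obstacle at $p=3$'' you flag is not a computation to be carried out but an actual obstruction to your plan.

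The paper's fix is to \emph{pair} these two places rather than treat them separately. One first shows (via the integrality trick $c_3(\phi_d)c_3(\phi'_d)=c_3(\sqrt3)=3$) that some $d$ gives $\{c_3(\phi_d),c_3(\psi_d)\}=\{1,3\}$; then, using the freedom to replace $d$ by $-d$ (which swaps the two values of $c_\infty$), one arranges
\[
c_\infty(\phi_d)\,c_3(\phi_d)=1=c_\infty(\psi_d)\,c_3(\psi_d),
\]
so the $3$ at $p=3$ cancels the $\tfrac13$ at $\infty$ simultaneously for both isogenies. This coupling is the missing idea.

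There is also a smaller gap: at the ``other'' bad primes you implicitly assume non-degenerate (hence semistable toric) reduction so that Corollary~\ref{non-split toric formula} applies, but degenerate primes need not be semistable and Table~\ref{table:tamagawa ratio} does not cover them. The paper handles all finite $p\neq 3$ uniformly and abstractly: choose $d$ so that $c_p(\phi_d),c_p(\phi'_d),c_p(\psi_d),c_p(\psi'_d)$ are all integers, then use $c_p(\phi_d)c_p(\phi'_d)=c_p(\sqrt3)=1$ to conclude all four equal $1$.
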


Before getting into its proof, we use Theorem \ref{one or the other} to prove a useful criterion. We first state a conjecture so that we can refer to it succinctly.

\begin{conj}[Conjecture~$(A,\ell, r)$]\label{conjecture2}
Let $A$ be an abelian variety over $\Q$, $\ell$ a prime, and $r \geq 0$.  Then the set $\{d \in \Z \, \colon \dim_{\FF_\ell}\Sha(A_d)[\ell] \geq r\}$ has positive lower natural density.  
\end{conj}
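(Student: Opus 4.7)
The statement is flagged as a conjecture precisely because it is open for general abelian varieties $A$ and primes $\ell$. My proposal therefore is to outline how I would prove it in the specific instances the paper is set up to handle, namely $A = A_{abc}$ and $\ell = 3$ (Theorem \ref{sha}); a general proof is out of reach.

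The approach I would take uses the $\sqrt{3}$-isogeny $\phi_d \colon J_d \to A_d$ and its dual $\hat\phi_d \colon A_d \to J_d$. The standard exact sequence
\[
0 \to A_d(\Q)/\phi_d J_d(\Q) \to \Sel(\phi_d) \to \Sha(J_d)[\phi_d] \to 0,
\]
together with its analogue for $\hat\phi_d$, lets one bound $\dim_{\FF_3}\Sha(A_d)[3]$ from below by $\dim_{\FF_3}\Sel(\phi_d) + \dim_{\FF_3}\Sel(\hat\phi_d)$ minus a Mordell-Weil contribution controlled by $A_d(\Q)[3]$ and the $3$-divisibility of $A_d(\Q)$. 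The ``large Selmer'' side is supplied by Theorem \ref{one or the other}: choosing $d_0$ with $c(\phi_{d_0}) = 3^{-2r}$ and $c(\psi_{d_0}) = 1$, the Greenberg-Wiles / Poitou-Tate formula forces $\dim_{\FF_3}\Sel(\phi_d) \geq 2r$ for every $d$ in the positive-density family of twists whose local Selmer data at primes of bad reduction agrees with $d_0$.

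The ``small Selmer on average'' side would be established by a $\sqrt{3}$-descent via $\psi_d$ (which has trivial Selmer ratio) using the geometry-of-numbers machinery of \cite{BKLOS:selmer}, \cite{BKLOS:sha}, and \cite{shnidman:RM}, giving an unconditional average upper bound on $\dim_{\FF_3}\Sel(\hat\phi_d)$ within the chosen family. Combining the two bounds shows $\dim_{\FF_3}\Sha(A_d)[3] \geq 2r$ for a positive proportion of $d$, which is Conjecture $(A_{abc},3,2r)$. To extract the $100\%$ statement of Theorem \ref{sha} as $(a:b:c)$ varies in $\PP^2(\Z)$, the remaining input is an Erd\H{o}s-Kac-type theorem for multivariable polynomial values (\cite{ELS,LOLS}) guaranteeing that the polynomials $a$, $b$, $c$, $q_3$, $q_4$, $t_3$, $t_4$ typically acquire $\gg \log\log H$ prime factors of the relevant reduction type, so that the hypothesis of Theorem \ref{one or the other} is met for enough $r$.

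The principal technical obstacle I anticipate is the small-Selmer-on-average step: the Bhargava-style counting must be adapted to twist families of abelian surfaces, with local Selmer conditions at primes of bad reduction determined by the Tamagawa ratios in Table \ref{table:tamagawa ratio}. Passing from an on-average bound on $\Sel(\hat\phi_d)$ to a pointwise lower bound on $\Sha(A_d)[3]$ for a positive-density set of individual $d$ requires combining the average bound with the forced lower bound on $\Sel(\phi_d)$ in a family where the two Selmer ratios point in \emph{opposite} directions; arranging this robustly, uniformly across the $100\%$ of $(a:b:c)$ allowed by the Erd\H{o}s-Kac input, is where the bulk of the work lies.
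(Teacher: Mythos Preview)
Your recognition that the statement is a conjecture with no general proof, together with your outline for the case $A = A_{abc}$, $\ell = 3$, matches the paper's strategy in Theorems~\ref{sha6} and~\ref{sha} essentially exactly: force one Selmer group large via $c(\phi_d) = 3^{-2r}$, bound rank via the $\psi$-descent with $c(\psi_d)=1$, and deduce Conjecture~$(A_{abc},3,2r)$; then invoke Erd\H{o}s--Kac for the $100\%$ statement.

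Two small corrections to your write-up. First, the group forced large by $c(\phi_d)=3^{-2r}$ is $\Sel_{\phi'_d}(A_d)$ (the Selmer group for the complementary isogeny $\phi'_d\colon A_d\to J_d$ with $\phi'_d\circ\phi_d=\sqrt{3}$), not $\Sel(\phi_d)$ itself; see the paper's inequality~\eqref{selmer ineq}. Second, you overstate the difficulty of the ``combining'' step: the $\psi$-descent does not merely bound $\Sel(\hat\phi_d)$ on average. Rather, \cite[Thm.~5.4]{shnidman:RM} applied with $c(\psi_d)=1$ gives $\mathrm{rk}\,J_d(\Q)=0$ \emph{pointwise} for at least $50\%$ of $d$ in the congruence class, while~\eqref{selmer ineq} holds pointwise for \emph{all} such $d$. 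So the combination is immediate: on the rank-zero half, the entire large Selmer group lands in $\Sha(A_d)[3]$.
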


\begin{thm}\label{sha6}
Let $(a \colon b \colon c) \in \mathbb{P}^2(\Z)$. 
If $C_{a,b,c}$ has at least $r$ primes of $t$-reduction and at least $r$ primes of $abc$-reduction, then $A_{a,b,c}$ satisfies Conjecture $(A,3, 2r)$.
\end{thm}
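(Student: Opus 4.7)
The plan is to combine Theorem~\ref{one or the other} with a two-pronged descent: use $\phi_d$ to manufacture a large $\hat\phi_d$-Selmer group, and use the companion isogeny $\psi_d$ (with $c(\psi_d) = 1$) to bound Mordell--Weil ranks on the same family of twists.

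First, I would promote Theorem~\ref{one or the other} from a single-$d$ to a positive-density statement. The local Selmer ratios $c_p(\phi_d)$ and $c_p(\psi_d)$ are trivial outside a finite set $S$ of primes (containing $3$, $\infty$, and the bad primes of $J$), and at each $p \in S$ they depend only on the squareclass of $d$ at $p$. Hence the set $\mathcal{D}$ of squarefree $d$ satisfying both $c(\phi_d) = 3^{-2r}$ and $c(\psi_d) = 1$ is a nonempty union of congruence classes modulo a fixed modulus, and so has positive lower natural density.

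Next, for $d \in \mathcal{D}$ outside the zero-density subset where $J_d$ or $A_d$ carries nontrivial rational $\sqrt{3}$-torsion, the Poitou--Tate/Greenberg--Wiles formula applied to the pair $(\phi_d,\hat\phi_d)$ gives
\[\dim_{\FF_3}\Sel(\hat\phi_d) = \dim_{\FF_3}\Sel(\phi_d) - v_3(c(\phi_d)) \geq 2r.\]
The standard descent sequence
\[0 \to J_d(\Q)/\hat\phi_d(A_d(\Q)) \to \Sel(\hat\phi_d) \to \Sha(A_d)[\hat\phi_d] \to 0,\]
together with the inclusion $\Sha(A_d)[\hat\phi_d] \hookrightarrow \Sha(A_d)[3]$ (since $\hat\phi_d$ is a $3$-isogeny, its kernel on $\Sha(A_d)$ lies in the image of $H^1(\Q,\ker\hat\phi_d)$, which is $3$-torsion), reduces the problem to killing $J_d(\Q)/\hat\phi_d(A_d(\Q))$ for a positive proportion of $d\in\mathcal{D}$.

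For this, I would invoke the geometry-of-numbers averaging machinery of \cite{BKLOS:selmer, BKLOS:sha, shnidman:RM}, applied to $\psi_d$. Because $c(\psi_d) = 1$ throughout $\mathcal{D}$, these results bound the average of $\#\Sel(\psi_d)$ on $\mathcal{D}$, whence $\Sel(\psi_d) = \Sel(\hat\psi_d) = 0$ for a positive proportion of $d \in \mathcal{D}$. For such $d$, the exact sequence relating $\Sel_{\sqrt 3}(J_d)$ to $\Sel(\psi_d)$ and $\Sel(\hat\psi_d)$ (coming from the factorization $[\sqrt 3]=\hat\psi_d\circ\psi_d$) forces $\Sel_{\sqrt 3}(J_d) = 0$. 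Hence $\mathrm{rk}_{\Z[\sqrt 3]}\,J_d(\Q) = 0$, and since $\sqrt{3}\, J_d(\Q) = \hat\phi_d\phi_d J_d(\Q) \subseteq \hat\phi_d A_d(\Q)$, the absence of $\sqrt{3}$-torsion gives $J_d(\Q) = \hat\phi_d A_d(\Q)$, completing the argument.

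The hardest step is the averaging input. The cited papers establish closely related bounds for twist families of abelian surfaces with a marked rational $3$-isogeny, but applying them in our setting requires (a) an integer-orbit parameterization of $\Sel(\psi_d)$-classes compatible with the explicit model of $\psi$ from Section~\ref{sec:multsqrt3}, and (b) a verification that the local congruence conditions cutting out $\mathcal{D}$ at the primes of $S$ do not destroy the positive-proportion conclusion. The adaptation follows the template of \cite{shnidman:RM}, and its execution is expected to occupy the bulk of the remaining argument.
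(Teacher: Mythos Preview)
Your approach is correct and follows essentially the same strategy as the paper's: promote Theorem~\ref{one or the other} to a positive-density set $\mathcal{D}$ of twists via congruence conditions, use the Greenberg--Wiles formula on the $\phi$-side to force $\dim_{\FF_3}\Sel_{\phi'_d}(A_d)\geq 2r$, and use the $\psi$-side (where $c(\psi_d)=1$) together with the averaging input to obtain rank~$0$ for a positive proportion of $d\in\mathcal{D}$, whence the bound on $\Sha(A_d)[3]$.  The only organizational difference is that the paper passes through the inclusion $\Sel_{\phi'_d}(A_d)\hookrightarrow\Sel_3(A_d)$ rather than through the descent sequence for $\hat\phi_d$, but these are equivalent bookkeepings.

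Your final paragraph, however, substantially overestimates the remaining work.  The averaging results of \cite{BKLOS:selmer} and \cite{shnidman:RM} are stated for an arbitrary $3$-isogeny of abelian varieties over a number field in a quadratic-twist family; they require no explicit model of the isogeny and no new integer-orbit parameterization.  In particular, \cite[Thm.~6.2]{BKLOS:selmer} already gives that $c(\phi_d)$ and $c(\psi_d)$ are determined by finitely many congruence conditions, and the proof of \cite[Thm.~5.4]{shnidman:RM} already establishes that at least $50\%$ of $d$ in any such congruence-defined set $\Sigma$ with $c(\psi_d)=1$ have $\mathrm{rk}\,J_d(\Q)=0$.  Your concerns (a) and (b) are therefore already dealt with in the cited literature, and the paper's proof simply invokes these theorems as black boxes; no ``bulk of the remaining argument'' is needed.
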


\begin{proof}
By \cite[Thm.\ 6.2]{BKLOS:selmer}, the global Selmer
ratios $c(\phi_d)$ and $c(\psi_d)$ are determined by finitely many
congruence conditions on $d$.  Thus, Theorem \ref{one or the other} implies that there is a
positive density subset $\Sigma \subset \Z$ of squarefree integers such
that $c(\phi_d) = 3^{-2r}$ and $c(\psi_d) = 1$, for
all $d$ in $\Sigma$.  By
the proof of
\cite[Thm.\ 5.4]{shnidman:RM}, at least $50\%$ of the twists $J_d$ for
$d \in \Sigma$ have rank 0.  It follows that at least $50\%$ of twists
$ A_d$ have rank 0 as well, since $ A_d$ and $J_d$ are isogenous. 

On the other hand, let
$\phi' \colon A \to J$ be such that $\phi' \circ \phi = \sqrt{3}$.
Then \cite[Thm.\ 4.2]{shnidman:RM} gives the middle inequality in: 
\begin{equation}\label{selmer ineq}
\dim_{\FF_3}\Sel_3( A_d) \geq \dim_{\FF_3}
\Sel_{\phi'_d}( A_d) \geq -\mathrm{ord}_3 \, c(\phi_d) =  2r,
\end{equation}
which is valid for all $d \in \Sigma$. For half of such $d$ the rank
 of $ A_d$ is 0.
 When combined with (\ref{selmer ineq}), this forces $\dim_{\FF_3}\Sha( A_d) \geq 2r$, and proves the theorem. (We are free to ignore the finitely many twists such that $A_d[3](\Q) \neq 0$.)    
\end{proof}

Theorem \ref{one or the other} has an analogue which uses primes of $q$-reduction instead of primes of $abc$-reduction, with the roles and $\phi$ and $\psi$ swapped. 
\begin{thm}\label{sha2}
Let $(a \colon b \colon c) \in \mathbb{P}^2(\Z)$, and suppose $C_{a,b,c}$ has at least $r$ primes of $t$-reduction and at least $r$ primes of $q$-reduction. Then $B_{a,b,c}$ satisfies Conjecture $(A,3,2r)$.
\end{thm}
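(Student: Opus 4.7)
The plan is to mirror the proof of Theorem~\ref{sha6} with the roles of $\phi$ and $\psi$ interchanged and with primes of $q$-reduction playing the role previously played by primes of $abc$-reduction. The first step will be an analogue of Theorem~\ref{one or the other}: under the hypothesis that $C_{abc}$ has at least $r$ primes $p>3$ of $q$-reduction and at least $r$ primes of $t$-reduction, there exists a squarefree integer $d$ with $c(\psi_d)=3^{-2r}$ and $c(\phi_d)=1$.

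To produce such a $d$, I would read off the local contributions from Table~\ref{table:tamagawa ratio}. At a prime $p$ of $q$-reduction, choosing $d$ locally in the squareclass of $-3$ (so that $J_d$ acquires split toric reduction at $p$) gives $c_p(\psi_d)=1/3$ and $c_p(\phi_d)=3$. At a prime $p$ of $t$-reduction, the torus character is trivial, so choosing $d$ to be a local square gives $c_p(\psi_d)=c_p(\phi_d)=1/3$. If $d$ is chosen simultaneously in the correct local squareclass at all of these $2r$ primes, their combined contribution is $-r-r=-2r$ to $\mathrm{ord}_3\,c(\psi_d)$ and $+r-r=0$ to $\mathrm{ord}_3\,c(\phi_d)$. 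The residual contributions, coming from $p=3$, $p=\infty$, the primes dividing $d$ itself, and any other primes of bad reduction built into $(a\colon b\colon c)$, are then controlled by imposing further congruence conditions on $d$, exactly as in the proof of Theorem~\ref{one or the other}.

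With the analogue in hand, the rest of the argument is a formal copy of the proof of Theorem~\ref{sha6}. By \cite[Thm.\ 6.2]{BKLOS:selmer} the Selmer-ratio conditions $c(\psi_d)=3^{-2r}$ and $c(\phi_d)=1$ depend on only finitely many congruences on $d$, so the set $\Sigma$ of squarefree integers satisfying them has positive lower density. Applying the proof of \cite[Thm.\ 5.4]{shnidman:RM} via the isogeny $\phi_d$ (whose global Selmer ratio is now $1$) shows that at least half of the twists $J_d$ with $d\in\Sigma$ have rank $0$, and consequently so do their isogenous companions $B_d$. Letting $\psi'\colon B\to J$ satisfy $\psi'\circ\psi=\sqrt{3}$, the descent inequality \cite[Thm.\ 4.2]{shnidman:RM} yields
\[
\dim_{\FF_3}\Sel_3(B_d)\;\ge\;\dim_{\FF_3}\Sel_{\psi'_d}(B_d)\;\ge\;-\mathrm{ord}_3\,c(\psi_d)\;=\;2r
\]
for every $d\in\Sigma$. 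Combining $\mathrm{rk}\,B_d(\Q)=0$ with $\dim_{\FF_3}\Sel_3(B_d)\ge 2r$ forces $\dim_{\FF_3}\Sha(B_d)[3]\ge 2r$ for a positive density of $d$, which is Conjecture $(B_{abc},3,2r)$.

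The main obstacle, just as in Theorem~\ref{sha6}, lies in the first step: one must verify that the local squareclass conditions at the $q$-primes (which depend on whether $-3$ is a local square) and at the $t$-primes (where $d$ should be a local square) can be satisfied simultaneously by a single global integer $d$, and that the residual contributions at $p=3$ and $p=\infty$ can be absorbed without perturbing the $3$-adic counts. Once Theorem~\ref{one or the other} has been established in detail, the adaptation to the present setting is a direct bookkeeping exercise in the table of Tamagawa ratios.
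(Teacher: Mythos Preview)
Your proposal is correct and follows exactly the route the paper indicates: the text preceding Theorem~\ref{sha2} explicitly states that Theorem~\ref{one or the other} has an analogue with the roles of $\phi$ and $\psi$ swapped and primes of $q$-reduction replacing primes of $abc$-reduction, and the paper's proof then simply says ``similar to that of Theorem~\ref{sha6}.'' Your reading of Table~\ref{table:tamagawa ratio} and your bookkeeping of the local contributions are accurate.

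The paper also offers a second, more economical derivation that you might note: Theorem~\ref{sha2} follows directly from Theorem~\ref{sha6} via Proposition~\ref{twist} and Corollary~\ref{AtwistB}. The $(-3)$-twist sends $B_{abc}$ to $A_{a'b'c'}$ with $(a'\colon b'\colon c')=(q_3,\,3b(b+a)-q_3,\,q_4)$, and under this substitution the hypothesis ``$r$ primes of $q$-reduction and $r$ primes of $t$-reduction for $C_{abc}$'' becomes the hypothesis of Theorem~\ref{sha6} for $C_{a'b'c'}$. This alternative avoids redoing the local analysis and instead exploits the symmetry of the family, at the cost of having to track the discriminant factors through the change of variables.
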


\begin{proof}
The proof is similar to that of Theorem \ref{sha6}. Alternatively, it follows from Theorem \ref{sha6}, via Proposition \ref{twist} and Corollary \ref{AtwistB}.
\end{proof}

Now we head toward the proof of Theorem \ref{one or the other}. We require some preliminary lemmas.

\begin{lemma}\label{pparity}
For each finite prime $p \neq 3$, there exists $d$ such that $c_p(\phi_d) = 1 =
c_p(\psi_d)$.  
\end{lemma}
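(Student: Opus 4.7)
Plan: The lemma is local, so it suffices to exhibit, for each $p$, a square class in $\Q_p^\times/(\Q_p^\times)^2$ realized by some integer $d$ for which $c_p(\phi_d)=1=c_p(\psi_d)$. Since $p\ne 3=\deg\phi=\deg\psi$, Remark~\ref{Tamagawa ratio} identifies $c_p(\phi_d)=c_p(A_d)/c_p(J_d)$ and $c_p(\psi_d)=c_p(B_d)/c_p(J_d)$, so the task reduces to finding $d$ with $c_p(J_d)=c_p(A_d)=c_p(B_d)$. I would split into cases by the reduction type of $J=J_{abc}$ over $\Q_p$.

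First, if $J/\Q_p$ has good reduction, pick any $d$ for which $\Q_p(\sqrt d)/\Q_p$ is unramified (for example $d=1$ for odd $p$, or $d\equiv 1\pmod 8$ for $p=2$). Then $J_d$, $A_d$, and $B_d$ all have good reduction, so every Tamagawa number in sight equals $1$ and both ratios are trivial.

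Second, if $J/\Q_p$ has non-degenerate bad reduction in the sense of Section~\ref{subsec:discfactors}, then Theorem~\ref{semistable reduction} says $J$ has toric reduction with torus character $\chi\in\{1,\minus3\}$, and Table~\ref{table:tamagawa ratio} lists the Tamagawa ratios for the unique square class $d_0\in\Q_p^\times/(\Q_p^\times)^2$ which makes $J_{d_0}$ split totally toric. For any $d$ outside this class, $J_d$ has non-split toric reduction; applying Corollary~\ref{non-split toric formula} with $J_{d_0}$ in place of $J$ and $d/d_0$ in place of $d$ yields $c_p(A_d)/c_p(J_d)=1$, and the symmetric argument gives $c_p(B_d)/c_p(J_d)=1$. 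Such a $d$ always exists, since at least three of the four square classes (seven of the eight when $p=2$) qualify.

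The most delicate case, and the main obstacle in the proof, is when $(a:b:c)\bmod p$ reduces to one of the eighteen degenerate points of Lemma~\ref{L:disc_sing}. There the explicit models of Section~\ref{sec:multsqrt3} break down and $J$ may have additive reduction, so neither Theorem~\ref{semistable reduction} nor Table~\ref{table:tamagawa ratio} applies directly. The plan is to exploit the $\Z[\sqrt3]$-multiplication: by Lemma~\ref{torus classification}, any toric part of the reduction of $J$ is classified by $H^1(\Q_p,\mu_2)$, so $J$ is either potentially totally toric or potentially good, and in either case a single quadratic twist $d_1$ renders $J_{d_1}$ semistable over $\Q_p$. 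A second twist, chosen exactly as in the two previous cases, then gives a class with $c_p(A_d)=c_p(J_d)=c_p(B_d)$. For $p=2$ the extra square classes accommodate the ramification that the two successive twists can introduce.
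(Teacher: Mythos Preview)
Your case analysis is correct for good reduction and for non-degenerate toric reduction, but the degenerate case has a genuine gap. The claim that ``a single quadratic twist $d_1$ renders $J_{d_1}$ semistable over $\Q_p$'' does not follow from Lemma~\ref{torus classification}: that lemma classifies rank-two tori carrying a $\Z[\sqrt3]$-action, which tells you the shape of the identity component \emph{once} reduction is toric, but says nothing about the degree of the extension needed to reach semistable reduction in the first place. Indeed, the paper explicitly notes (end of the introduction) that $J_{abc}$ can have genuinely additive reduction at degenerate primes, and for potentially good reduction there is no reason the minimal semistabilizing extension should be quadratic. Even granting your claim, you would still need to know that the generators of $\ker\phi$ and $\ker\psi$ are defined over an unramified extension in order to invoke Corollary~\ref{non-split toric formula}, which is not addressed.

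The paper's proof sidesteps reduction types entirely. It uses the dual isogenies $\phi',\psi'$ with $\phi'\circ\phi=\psi'\circ\psi=\sqrt3$ and the fact that $\ker\phi\simeq\ker\psi'\simeq\Z/3\Z$ while $\ker\psi\simeq\ker\phi'\simeq\mu_3$. One then picks $d\in\Q_p^\times/\Q_p^{\times2}$ so that neither $(\Z/3\Z)_d$ nor $(\mu_3)_d$ has a nontrivial $\Q_p$-point; this simultaneously forces all four of $c_p(\phi_d),c_p(\phi'_d),c_p(\psi_d),c_p(\psi'_d)$ to be positive integers (the denominator in the definition of the local Selmer ratio becomes $1$). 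Multiplicativity gives $c_p(\phi_d)c_p(\phi'_d)=c_p(\sqrt3)=c_p(J_d)/c_p(J_d)=1$ and likewise for $\psi$, so each factor is $1$. This argument is uniform in $p\neq3$ and requires no knowledge of the N\'eron model, which is exactly what you are missing in the degenerate case.
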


\begin{proof}
Let $\phi' \colon A \to
J$ and $\psi' \colon B \to J$ be such that $\phi' \circ \phi = \sqrt
3 = \psi' \circ \psi$.  
Then $\ker \psi \simeq \ker \phi '$ and $\ker \phi \simeq \ker \psi'$. 
It follows that we can
choose $d \in \Q_p^\times /\Q_p^{\times 2}$ such that all four of
$c_p(\phi_d)$, $c_p(\phi_d')$, $c_p(\psi_d)$, and $c_p(\psi_d')$ are positive integers.
On the other hand, using Remark \ref{Tamagawa ratio} and the multiplicativity of Selmer ratios \cite[Lem.\ 3.5]{shnidman:RM}, we have
\[1 = c_p(J)/c_p(J) =  c_p(\sqrt3) = c_p(\phi_d)c_p(\phi_d') = c_p(\psi_d)c_p(\psi_d'),\]
so that all four of them must be equal to 1. 
\end{proof}

\begin{lemma}\label{3parity}
We have $\ord_3\, c(\phi_d) \equiv \ord_3\, c(\psi_d)$
and $\ord_3 \, c_3(\phi_d)
\not\equiv \ord_3 \, c_3(\psi_d)$ modulo $2$.
\end{lemma}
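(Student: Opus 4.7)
The plan is to decompose $\ord_3 c(f) = \sum_p \ord_3 c_p(f)$ for $f = \phi_d, \psi_d$, analyze each local difference modulo $2$, and sum. The essential tools will be the multiplicativity identity
\[c_p(\phi_d) c_p(\phi'_d) = c_p([\sqrt{3}]_{J_d}) = c_p(\psi_d) c_p(\psi'_d),\]
which follows from $\phi' \circ \phi = [\sqrt{3}]_{J_d} = \psi' \circ \psi$, together with the fact that $\ker \phi \simeq \ZZ/3\ZZ$ is étale while $\ker \psi \simeq \mu_3$ has multiplicative type (Cartier dual to the former).

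At primes $p \neq 3, \infty$, Remark \ref{Tamagawa ratio} will give $c_p(\phi_d) = c_p(A_d)/c_p(J_d)$ and $c_p(\psi_d) = c_p(B_d)/c_p(J_d)$, so the difference reduces to $\ord_3 \bigl(c_p(A_d)/c_p(B_d)\bigr)$. Inspection of Table \ref{table:tamagawa ratio} shows this ratio lies in $\{9, 1, 1/9\}$ at every prime of non-degenerate reduction, hence has even $3$-adic valuation. For primes lying over the $18$ degenerate residue points of Lemma \ref{L:disc_sing}, I would establish the same even parity by extending the case analysis of Section~\ref{reduction types}. At $p = \infty$, the $G_\RR$-action on $\ker \phi_d = (\ZZ/3\ZZ)_d$ is by $\chi_d$, while on $\ker \psi_d = (\mu_3)_d$ it is by $\chi_{-3d}$; since the mod-$3$ cyclotomic character $\chi_{-3}$ is non-trivial on $G_\RR$, exactly one of these restrictions is trivial, so exactly one of the two kernels has $3$ real points and the other is trivial. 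Because $H^1(G_\RR, M) = 0$ for any order-$3$ Galois module $M$ (as $G_\RR$ has order $2$), the cokernels of $\phi_d$ and $\psi_d$ on $\RR$-points vanish; hence $c_\infty(f) = 1/|J_d(\RR)[f]|$ for $f = \phi_d, \psi_d$, yielding
\[\ord_3 c_\infty(\phi_d) - \ord_3 c_\infty(\psi_d) \equiv 1 \pmod 2.\]

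For part (b), at $p = 3$ the isogeny $\phi_d$ is étale, so Schaefer's formula gives $c_3(\phi_d) = c_3(A_d)/c_3(J_d)$. In contrast, $\ker \psi_d$ has connected (non-étale) closure over $\ZZ_3$ with $1$-dimensional Lie algebra, contributing an extra factor of $3^{\pm 1}$: $c_3(\psi_d) = \bigl(c_3(B_d)/c_3(J_d)\bigr) \cdot 3^{\pm 1}$. Thus $\ord_3 c_3(\phi_d) - \ord_3 c_3(\psi_d) \equiv \ord_3\bigl(c_3(A_d)/c_3(B_d)\bigr) + 1 \pmod 2$, and I would complete part (b) by verifying that $\ord_3\bigl(c_3(A_d)/c_3(B_d)\bigr)$ is even. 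Part (a) will then follow by summation: the contributions at primes $p \neq 3, \infty$ are each even, while those at $p = 3$ and $p = \infty$ are each odd and sum to an even total.

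The hard part will be the Schaefer-formula analysis at $p = 3$ and the parity of $c_3(A_d)/c_3(B_d)$, particularly when $J_d$ has bad (possibly additive) reduction at $3$; since $A_d$ and $B_d$ are not themselves Jacobians, this will require a direct study of their Néron models at $3$ rather than appealing to the hyperelliptic description of $C_{abc}$.
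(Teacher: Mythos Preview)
Your approach inverts the logical structure of the paper's proof and, as a result, runs into precisely the difficulties the paper's argument is designed to avoid.

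The paper proves the \emph{global} congruence $\ord_3 c(\phi_d)\equiv\ord_3 c(\psi_d)\pmod 2$ first, in one line: by \cite[Thm.~4.3]{shnidman:RM} (Poitou--Tate duality), both quantities have the parity of $\dim_{\FF_3}\Sel_{\sqrt3}(J_d)$. No local computation is needed. The \emph{local} statement at $3$ is then deduced from the global one by a twisting trick: pick $d'$ in the same square class as $d$ in $\Q_3^\times$, but (using Lemma~\ref{pparity} and the Chinese remainder theorem) arranged so that $c_p(\phi_{d'})=c_p(\psi_{d'})=1$ for every bad prime $p\neq 3$. Then the only nontrivial local contributions to $c(\phi_{d'})$ and $c(\psi_{d'})$ are at $3$ and $\infty$; since the contributions at $\infty$ differ in parity (as you correctly observe) and the global parities agree, the contributions at $3$ must also differ in parity. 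Since $c_3(\phi_d)=c_3(\phi_{d'})$ and $c_3(\psi_d)=c_3(\psi_{d'})$, you are done.

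Your plan instead tries to compute each local parity directly and sum. This leaves real gaps. At primes $p\neq 3,\infty$ of \emph{degenerate} reduction (and at $p=2$), Table~\ref{table:tamagawa ratio} does not apply, and the paper explicitly notes that Tamagawa ratios there are subtle---$J$ can have additive reduction and $A,B$ are not Jacobians. Indeed, the remark immediately following the lemma says that the local parity $\ord_3 c_p(\phi_d)\equiv\ord_3 c_p(\psi_d)$ at such primes is obtained \emph{from} the argument of Lemma~\ref{3parity}, not as an input to it. Likewise, at $p=3$ you reduce to showing $\ord_3\bigl(c_3(A_d)/c_3(B_d)\bigr)$ is even and acknowledge this requires a direct study of the N\'eron models of $A_d,B_d$ at $3$; this is exactly what the paper's indirect argument sidesteps. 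So the ``hard part'' you identify is not merely hard but unnecessary: use Poitou--Tate for (a), then the $d\mapsto d'$ trick for (b).
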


\begin{proof}
The integers $\ord_3\, c(\phi_d)$ and $\ord_3 \, c(\psi_d)$ have
the same parity, since they both have the parity of
$\dim_{\FF_3}\Sel_{\sqrt 3}(J_d)$ by an application of Poitou-Tate duality \cite[Thm.\ 4.3]{shnidman:RM}.  

The second claim is local in nature, but for the proof we write both global Selmer ratios as products of
local Selmer ratios and compare the parities of each local term.  We proceed in this indirect manner because we lack clean formulas for local Selmer ratios at primes of degenerate reduction.

We choose $d'$ in the same square class as $d$ in $\Q_3^\times$.  For each prime $p \neq 3$ dividing the conductor $N_J$ of $J$, we also demand that $c_p(\phi_{d'}) = 1 = c_p(\psi_{d'})$.  We can arrange to do this because of Lemma \ref{pparity} and the Chinese remainder theorem. If $p \nmid 3N_J$, then $c_p(\phi_{d'}) = 1 = c_p(\psi_{d'})$ since $J^{(d')}$ has a twist of good reduction \cite[Lem.\ 4.6]{BKLOS:sha}.

By construction, $\ord_3 \, c_\ell(\phi_{d'})$ and $\ord_3 \,
c_\ell(\psi_{d'})$ have the same parity for all $\ell \notin \{3,
\infty\}$.  At infinity, we have $\ord_3 \, c_\infty(\phi_{d'}) \not\equiv
\ord_3 \, c_\infty(\psi_{d'})$ since $\ker \phi \simeq \Z/3\Z$ and $\ker \psi
\simeq \mu_3$, which are non-isomorphic over $\RR$.  Using the global congruence $\ord_3 \, c(\phi_{d'}) \equiv \ord_3 \, c(\psi_{d'})$ and the definition of the global Selmer ratio as the Euler product of the local Selmer ratios, we conclude that
\[\ord_3 \, c_3(\phi_d) = \ord_3  \, c_3(\phi_{d'})\not\equiv\ord_3\, c_3(\psi_{d'})  = \ord_3 \, c_3(\psi_d) \pmod 2. \qedhere\] 
\end{proof}

\begin{rmk}
An argument similar to that in the proof of Lemma \ref{3parity} shows that for $p \notin \{ 3,\infty\}$, we have $\mathrm{ord}_3 \, c_p(\phi_d) \equiv \mathrm{ord}_3 \, c_p(\psi_d) \pmod 2$.  This can be useful in situations where it is difficult to compute $c_p(\phi_d)$ and $c_p(\psi_d)$ directly.  For example, there are cases of degenerate reduction where no twist of $J$ has semistable reduction, as mentioned in the introduction.     
\end{rmk}

\begin{lemma}
There exist $d$ such that $\{c_3(\phi_d), c_3(\psi_d)\} = \{1,3\}$.
\end{lemma}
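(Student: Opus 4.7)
\begin{prf}
\textbf{Plan.}
The key observation is that the local Selmer ratios $c_3(\phi_d)$ and $c_3(\psi_d)$ depend only on the image of $d$ in $\Q_3^\times/\Q_3^{\times 2}$, a group of order four.  Combined with the Chinese Remainder Theorem, this reduces the question to exhibiting a single $3$-adic square class $d_0$ for which $\{c_3(\phi_{d_0}), c_3(\psi_{d_0})\} = \{1,3\}$.  Since $\phi_d$ and $\psi_d$ are $3$-isogenies, both ratios are integer powers of $3$, so the pair $(\ord_3 c_3(\phi_d), \ord_3 c_3(\psi_d))$ takes at most four values as $d$ varies, and by Lemma~\ref{3parity} the two entries have opposite parities in each of the four classes.

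First, I would bound the absolute value of each exponent.  Applying the multiplicativity of Selmer ratios to the factorization $\sqrt 3 = \phi' \circ \phi = \psi' \circ \psi$ yields
\[c_3(\phi_d)\,c_3(\phi_d') \;=\; c_3(\sqrt 3) \;=\; c_3(\psi_d)\,c_3(\psi_d'),\]
and since the kernels of $\phi'$ and $\psi$ are both $\mu_3$ while those of $\psi'$ and $\phi$ are both $\Z/3\Z$, a uniform bound on $|\ord_3 c_p|$ for a $3$-isogeny at $p=3$ (see \cite[Lem.~3.6]{shnidman:RM}) places $\ord_3 c_3(\phi_d), \ord_3 c_3(\psi_d)$ in a short range around $0$.

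Next, I would pick $d_0$ to produce the simplest possible local behavior at $3$.  Since $J$ has potentially semistable reduction at $3$, some quadratic twist $J_{d_0}$ is semistable over $\Q_3$, and the $\sqrt 3$-level structure forces the two rational order-$3$ kernels to split apart in the N\'eron model: exactly one of $D_1, D_2$ lies in the identity component after possibly passing to an unramified extension.  Applying the analysis already carried out in the proof of Proposition~\ref{split toric formula} (or its formal-group variant at $p=3$) then yields $c_3(\phi_{d_0}) \in \{1,3\}$ and forces $c_3(\psi_{d_0})$ to be the other member of $\{1,3\}$ by the parity constraint of Lemma~\ref{3parity}.

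The main obstacle is that the reduction analysis of Section~\ref{reduction types} was restricted to $p>3$, so we cannot directly import those formulas at $p=3$.  The hardest step is therefore to check that some square class $d_0 \in \Q_3^\times/\Q_3^{\times 2}$ makes $J_{d_0}$ semistable at $3$, and to verify that the dichotomy between the $\Z/3\Z$- and $\mu_3$-kernels in the component group persists at $p=3$.  Once that is done, parity and the product formula pin the pair to $\{1,3\}$, completing the proof.
\end{prf}
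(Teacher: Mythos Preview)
Your plan has a genuine gap: the entire argument hinges on finding a twist $d_0$ for which $J_{d_0}$ is semistable at $3$, and then carrying out a component-group analysis there.  You yourself flag this as ``the hardest step'', but you do not prove it, and in fact the paper never claims it.  The reduction analysis of Section~\ref{reduction types} is expressly restricted to residue characteristic $\neq 2,3$, and in the introduction the authors note that additive reduction can occur on $\sH_3$ (in codimension two).  So there is no reason to expect that any quadratic twist of $J_{abc}$ is semistable at $3$ for general $(a:b:c)$, and without that your proposed dichotomy between the $\ZZ/3\ZZ$- and $\mu_3$-kernels in the special fibre has no footing.

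The paper's proof sidesteps all geometry at $p=3$.  The missing idea is the \emph{integrality trick} already used in Lemma~\ref{pparity}: since $\ker\phi\simeq\ker\psi'$ and $\ker\psi\simeq\ker\phi'$ as Galois modules, one can choose a single $d\in\Q_3^\times/\Q_3^{\times 2}$ so that all four of $c_3(\phi_d),c_3(\phi'_d),c_3(\psi_d),c_3(\psi'_d)$ are positive integers.  Combined with the exact value $c_3(\sqrt3)=3$ (from \cite[Lem.~3.7]{shnidman:RM}, not merely a bound) and multiplicativity, this forces each of the four ratios to lie in $\{1,3\}$.  Lemma~\ref{3parity} then finishes.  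You had the multiplicativity and the parity constraint, but you replaced the crucial integrality-plus-exact-value step with a vague ``short range'' bound and a semistability hypothesis that you cannot justify.
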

\begin{proof}
As in the proof of Lemma \ref{pparity},  we can
choose $d \in \Q_3^\times /\Q_3^{\times 2}$ such that all four of
$c_3(\phi_d)$, $c_3(\phi_d')$, $c_3(\psi_d)$, and $c_3(\psi_d')$ are integers.
On the other hand, using \cite[Lem.\ 3.7]{shnidman:RM} and the multiplicativity of Selmer ratios \cite[Lem.\ 3.5]{shnidman:RM}, we have  
\[3 = c_3(\sqrt3) = c_3(\phi_d)c_3(\phi_d') = c_3(\psi_d)c_3(\psi_d')\]
so that all four must be either 1 or 3.  By the previous Lemma, this
means that one of $c_3(\phi_d)$ and $c_3(\psi_d)$ is 1 and the other
is 3. 
\end{proof}

\begin{proof}[Proof of Theorem $\ref{one or the other}$]  
Let $p_1, \ldots, p_r$ be primes of $t$-reduction and let $q_1,\ldots, q_r$ be primes of $abc$-reduction for $C$. We are given that $q_i > 3$ and we may assume $p_i > 3$ as well since if $p$ is a prime of $t$-reduction, then $p \equiv 1\pmod3$. Then $c_{p_i}(\phi) = c_{p_i}(\psi) = 1/3$ by the results in Sections \ref{trinode} and \ref{remain}.  On the other hand, for each $q_i$, there exists a twist $d$ such that $c_{q_i}(\phi_d) = 1/3$ and $c_{q_i}(\psi_d) = 3$ by the results in Sections \ref{trinode}-\ref{remain}. Combining the lemmas of this section and the Chinese
remainder theorem, we see that there exist $d \in \Z$ such that:
\begin{enumerate}
\item $c_\ell(\phi_d) = 1 = c_\ell(\psi_d)$ for all $\ell \notin\{3, \infty, p_1, \ldots,  p_r, q_1,\ldots, q_r\}$,
\item $c_{p_i(}\phi_d) = 1/3 = c_{p_i}(\psi_d)$, for $i = 1,\ldots, r$, 
\item $c_{q_i(}\phi_d) = 1/3$ and  $c_{q_i}(\psi_d) = 3$, for $i = 1,\ldots, r$, and 
\item $\{c_3(\phi_d), c_3(\psi_d)\} =
\{1,3\}$. 
\end{enumerate}
Note that $\{c_\infty(\phi_d),
c_\infty(\psi_d)\} = \{1, 1/3\}$, for all $d$, since $\ker \phi \simeq
\Z/3\Z$ and $\ker \psi \simeq \mu_3$. Also, $\{c_\infty(\phi_d), c_\infty(\phi_{-d})\} = \{1,1/3\}$ by \cite[Lem.\ 3.3]{shnidman:RM}.  Thus, we may even choose $d$ so that 
\[c_\infty(\phi_d)
c_3(\phi_d) = 1 = c_\infty(\psi_d)c_3(\psi_d).\]   
For this choice of $d$, we indeed have $c(\phi_d) = 3^{-2r}$ and $c(\psi_d) = 1$.
\end{proof}

Finally, we use Theorem \ref{sha6} to prove Theorem \ref{sha} of the introduction.
\begin{proof}[Proof of Theorem $\ref{sha}$]
By Theorem \ref{sha6}, it suffices to show that $100\%$ of $[a : b : c] \in \mathbb{P}^2(\Z)$, ordered by the usual height $\max\{|a|,|b|,|c|\}$, have at least $r$ distinct primes $p > 3$ dividing $a$ (resp.\ $t_3$) and not dividing any other discriminant factor.  By Lemma \ref{L:disc_sing}, this condition is equivalent to $p$ dividing $a$ (resp.\ $t_3$) and not dividing 
\[bc(c-b)(c-\zeta b)(c-\zeta^2b) = bc(c-b)(c^2 +bc+b^2),\]
(resp.\ $abc(a-c)$).  Thus, Theorem \ref{sha} follows from Theorem \ref{erdos-kac} below, a general fact about prime divisors of polynomial values.   
\end{proof}

\begin{rmk}
In the same way, we deduce that $100\%$ of the surfaces $B_{a,b,c}$ satisfy Conjecture $(A,3,r)$, using Theorem \ref{sha2} instead of Theorem \ref{sha6}. 
\end{rmk}

\begin{thm}\label{erdos-kac}
Let $F,G \in \ZZ[x,y,z]$ be non-constant homogenous polynomials with no common factor over $\bar \Q$, and fix $N \geq 1$.  Then for $100\%$ of points $(a \colon b \colon c)$ on $\PP^2(\Z)$, ordered by height, there are at least $N$ primes $p$ such that $p \mid F(a,b,c)$ and $p\nmid G(a,b,c)$.   
\end{thm}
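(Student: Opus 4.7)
The plan is to write $\omega^*(a,b,c) := \#\{p : p \mid F(a,b,c) \text{ and } p \nmid G(a,b,c)\}$ as
\[\omega^*(a,b,c) = \omega(F(a,b,c)) - \omega(\gcd(F(a,b,c), G(a,b,c))),\]
and to show that $\omega^*(a,b,c) \geq N$ outside a set of arbitrarily small density in $\PP^2(\ZZ)$. It suffices, for each $\epsilon > 0$, to control the two terms separately on the complement of an exceptional set of density at most $\epsilon/2$.

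For the first term I would invoke the multivariable Hardy--Ramanujan/Erd\H{o}s--Kac type results of \cite{ELS, LOLS} applied to the homogeneous polynomial $F$: for any fixed $M$, the proportion of primitive $(a,b,c) \in \ZZ^3$ of height at most $H$ with $\omega(F(a,b,c)) \leq M$ tends to zero as $H \to \infty$. For the second term, since $F$ and $G$ share no irreducible factor in $\bar\Q[x,y,z]$, Bezout bounds the common zero locus $V(F)\cap V(G)$ by at most $\deg F \cdot \deg G$ geometric points, a bound persisting over $\bar\FF_p$ for all but finitely many primes $p$. Hence a random primitive $(a,b,c) \bmod p$ lies on the common vanishing locus with probability $O(1/p^2)$. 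By standard equidistribution of primitive integer points modulo small primes, together with the trivial bound $|F(a,b,c)| = O(H^{\deg F})$ for the large primes, the expectation of $\omega(\gcd(F(a,b,c), G(a,b,c)))$ over primitive triples of height at most $H$ is $O\bigl(\sum_p 1/p^2\bigr) = O(1)$, uniformly in $H$. Markov's inequality then provides $K = K(\epsilon)$ such that $\omega(\gcd) \leq K$ outside a set of density at most $\epsilon/2$.

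Taking $M = N + K$, the first step gives a set of density at most $\epsilon/2$ where $\omega(F(a,b,c)) < M$ once $H$ is sufficiently large, and outside the union of these two exceptional sets we have $\omega^*(a,b,c) \geq M - K = N$. Since $\epsilon > 0$ is arbitrary, the theorem follows. The main technical subtlety is ensuring uniform equidistribution of primitive triples in $(\ZZ/p\ZZ)^3$ for primes $p$ up to an appropriate power of $H$, needed for both bounds; this is handled routinely by restricting the count to primes $p \leq H^\alpha$ for small $\alpha > 0$ and bounding the contribution of the larger prime divisors of $F(a,b,c)$ and of $\gcd(F(a,b,c), G(a,b,c))$ separately, each being $O(1)$ for trivial size reasons.
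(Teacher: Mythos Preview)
Your argument is correct and follows the same approach the paper gestures at: the paper's own proof is essentially a pointer to the multivariate Erd\H{o}s--Kac results of \cite{ELS} and the forthcoming \cite{LOLS}, and your sketch supplies the details one would expect to find there. Your decomposition $\omega^* = \omega(F) - \omega(\gcd(F,G))$, the Erd\H{o}s--Kac input for the first term, and the B\'ezout/equidistribution bound $\mathbb{E}[\omega(\gcd(F,G))] = O(1)$ for the second term together with Markov, constitute a sound proof; the handling of large primes via the trivial size bound is also fine.
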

\begin{proof}
This can be deduced from Erdos-Kac type theorems for multivariable polynomials, such as \cite[Thm.\ 1.8]{ELS}. For details, see the forthcoming preprint \cite{LOLS}.
\end{proof}
Theorem \ref{sha} implies Conjecture $(A,3,2r)$ for  $r = \min(N_\mathrm{lin},N_t)$, where $N_\mathrm{lin}$ (resp.\ $N_t)$ is the number of primes $p > 3$ of $abc$-reduction (resp.\ $t$-reduction) for $C$.   Even when $C$ has no primes $p > 3$ of $abc$-reduction, one can often prove Conjecture $(A \times B,3,2)$ by similar methods, as in the following result.

\begin{thm}\label{A32}
If $C_{a,b,c}$ has a prime of $t$-reduction, then Conjecture $(A_{a,b,c} \times B_{a,b,c}, 3,2)$ holds.
\end{thm}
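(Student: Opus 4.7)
The plan is to establish, for a positive-density set of squarefree $d$, both $\dim_{\FF_3}\Sha(A_d)[3]\geq 1$ and $\dim_{\FF_3}\Sha(B_d)[3]\geq 1$ simultaneously; since $\Sha((A_{abc}\times B_{abc})_d)[3]\simeq\Sha(A_d)[3]\oplus\Sha(B_d)[3]$, this yields Conjecture $(A_{abc}\times B_{abc},3,2)$. The novelty over Theorem \ref{sha6} is that a single $t$-prime alone only injects one factor of $1/3$ into the global Selmer ratios, so we must harvest the Sha contribution from $A_d$ and $B_d$ in tandem rather than from $A_d$ in isolation.

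Fix a prime $p>3$ of $t$-reduction of $C_{abc}$; Table \ref{table:tamagawa ratio} gives $c_p(\phi_d)=c_p(\psi_d)=1/3$ whenever $d\in(\Q_p^\times)^2$, a shared local contribution to both global Selmer ratios. Imitating the proof of Theorem \ref{one or the other}, I would invoke Lemma \ref{pparity} at every remaining bad prime $\ell\notin\{3,p\}$ and use the $3$-adic and archimedean lemmas to impose finitely many further local conditions on $d$ that force $c(\phi_d)=c(\psi_d)=1/3$. The resulting set $\Sigma$ of admissible squarefree $d$ has positive density, and \eqref{selmer ineq} gives $\dim\Sel_{\phi_d'}(A_d)\geq 1$ and $\dim\Sel_{\psi_d'}(B_d)\geq 1$ for every $d\in\Sigma$.

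To convert these Selmer inequalities into Tate--Shafarevich inequalities, it suffices to isolate a positive-density subset $\Sigma'\subseteq\Sigma$ on which $\mathrm{rank}_\Z J_d(\Q)=0$. The decomposition $J[\sqrt 3]=\ker\phi\oplus\ker\psi$ into complementary rational subgroups produces an injection
\[\Sel_{\sqrt 3}(J_d)\hookrightarrow\Sel_{\phi_d}(J_d)\oplus\Sel_{\psi_d}(J_d),\]
so it is enough to show that the joint event $\Sel_{\phi_d}(J_d)=0=\Sel_{\psi_d}(J_d)$ holds on a positive-density subset; the $\Z[\sqrt 3]$-torsion of $J_d(\Q)$ is generically trivial, so this forces rank zero.

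Establishing this joint vanishing is the main obstacle. I would approach it by extending the geometry-of-numbers framework of \cite{shnidman:RM}*{Thm.\ 5.4} and \cite{BKLOS:selmer} from a single $3$-isogeny to the pair $(\phi_d,\psi_d)$ simultaneously. The key structural input is that $\ker\phi\simeq\Z/3\Z$ and $\ker\psi\simeq\mu_3$ are non-isomorphic as Galois modules, so the local parameters controlling the two Selmer groups under quadratic twisting are essentially independent; one expects the joint distribution of $(\dim\Sel_{\phi_d}(J_d),\dim\Sel_{\psi_d}(J_d))$ over $d\in\Sigma$ to factor as the product of its two marginal Bhargava-type distributions. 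Each marginal already places positive mass on the trivial Selmer group---even when the global Selmer ratio is $1/3$, as is the case here---so the joint event of simultaneous vanishing has positive density inside $\Sigma$. Granted this, for $d\in\Sigma'$ the Cassels--Schaefer comparison yields $\dim\Sha(A_d)[\phi_d']=\dim\Sha(B_d)[\psi_d']=1$, and the theorem follows.
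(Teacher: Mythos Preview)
Your approach has a genuine gap, and in fact the route you chose is obstructed by a parity constraint you overlooked.

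You arrange $c(\phi_d)=c(\psi_d)=1/3$ and then try to force $\mathrm{rk}\,J_d(\Q)=0$ by finding a positive-density set on which $\Sel_{\phi_d}(J_d)=0=\Sel_{\psi_d}(J_d)$, so that (via your injection) $\Sel_{\sqrt 3}(J_d)=0$. But this is impossible: by \cite{shnidman:RM}*{Thm.~4.3} (used in Lemma~\ref{3parity}), the parity of $\dim_{\FF_3}\Sel_{\sqrt 3}(J_d)$ equals the parity of $\ord_3 c(\phi_d)$, which is odd here. Hence $\Sel_{\sqrt 3}(J_d)\neq 0$ for \emph{every} $d$ in your set $\Sigma$, and the joint vanishing you need never occurs. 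Even if one ignored this obstruction, the ``independence'' of the distributions of $\Sel_{\phi_d}$ and $\Sel_{\psi_d}$ that you invoke is not available in the cited references; \cite{shnidman:RM} and \cite{BKLOS:selmer} control a single isogeny at a time and say nothing about the joint law.

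The paper sidesteps all of this with a much simpler idea: rather than splitting the desired two Sha-dimensions as $1+1$ between $A_d$ and $B_d$, it puts both into a single factor. One arranges the unordered pair $\{c_\infty(\phi_d)c_3(\phi_d),\,c_\infty(\psi_d)c_3(\psi_d)\}=\{3,1/3\}$, so that globally $\{c(\phi_d),c(\psi_d)\}=\{1,1/9\}$. Whichever isogeny has ratio $1$ yields, exactly as in Theorem~\ref{sha6}, that at least $50\%$ of twists in this congruence class have rank $0$; the other isogeny, with ratio $1/9$, then forces $\dim_{\FF_3}\Sha(X_d)[3]\geq 2$ for the corresponding quotient $X\in\{A_{abc},B_{abc}\}$. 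One does not need to know which of $A$ or $B$ this is, because Conjecture $(X,3,2)$ for either one already implies Conjecture $(A_{abc}\times B_{abc},3,2)$.
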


\begin{proof}
Suppose $p $ is a prime of $t$-reduction.  As in Theorem \ref{sha}, we can find $d$ such that
\[\{c_\infty(\phi_d)c_3(\phi_d) , c_\infty(\psi_d)c_3(\psi_d)\} = \{3,1/3\}.\]
Note that we are only claiming an equality of sets, not an equality of ordered pairs.  Since we can always choose $d$ such that $c_q(\phi_d) = 1 = c_q(\psi_d)$ for all $q \nmid 3p\infty$, and since $c_p(\phi) = c_p(\psi) = 1/3$, it follows that we can find $d$ such that the global Selmer ratios are 
\[\{c(\phi_d), c(\psi_d)\} = \{1,1/9\}.\]
As in the proof of Theorem \ref{sha6}, this implies Conjecture $(A,3,2)$ for one of $A_{a,b,c}$ or $B_{a,b,c}$.
\end{proof}

\section{Examples}\label{examples}

We first prove Theorems~\ref{ex1} and \ref{ex2}, concerning the curve $C_{1,2,-1} \colon y^2 = 4f_{1,2,-1}(x/4)$. Explicitly,
\[ C = C_{1,2,\minus1} \colon y^2 =8x^5 - 3x^4 - 2x^3 - 7x^2 + 4x + 20.\]
The discriminant factors are 
\[a = 1, \quad b =2,\quad c=-1,\quad a-c = 2, \quad q_3 = 2^4, \quad q_4 = 2^2, \quad t_3 = 2^2, \quad t_4 = 2^2\cdot 31. \]
It follows that $J = J_{1,2,\minus1}$ has good reduction for all $p \nmid 2\cdot 31$.  By Theorem \ref{semistable reduction}, $J$ has bad semistable reduction at $31$.  Over $\FF_2$,  the projective model $y^2 = x^2(x+z)^2z^2$ has three ordinary double points; hence $J$ has split toric reduction over $\Q_2$.  It follows that $J$ has root conductor $2\cdot 31$, and indeed there is a weight 2 modular form of level $62$ whose coefficients can be read off from the zeta function of $C$. A search through modular forms of low level reveals that $J$ has minimal conductor among simple abelian surfaces over $\Q$ with $\Z[\sqrt 3]$-multiplication.\footnote{Even so, the large discriminant of $C$ prevents its appearance in the current version of the LMFDB.}  

To study Mordell-Weil ranks of the quadratic twists $J_d$, we use the results of Section 5 and general facts about local Selmer ratios \cite[\S3]{shnidman:RM}. We have $c_p(\phi_d) = c_p(\psi_d) = 1$ for $p \nmid 2\cdot 3\cdot 31 \cdot \infty$. By Proposition \ref{binode}, we have $c_{31}(\phi) = c_{31}(\psi) = 1/3$, and by Corollary \ref{non-split toric formula}, we have $c_{31}(\phi_d) = c_{31}(\psi_d) = 1$ for $d \notin \Q_{31}^{\times2}$.  
For $p  =3$, we check that $J$ has ordinary reduction, and that $D_1$ reduces to a non-trivial divisor class.  Thus $\ker \phi$ extends to an \'etale group scheme over $\Z_3$, and it follows that $c_3(\phi_d) = 1$ and $c_3(\psi_d) = 3$ for all $d$.  Corollary \ref{p=2} gives $c_2(\phi) = 1/3$ and $c_2(\psi) = 3$, whereas $c_2(\phi_d) = c_2(\psi_d) = 1$ for all non-square $d \in \Q_2^\times$.  Finally, $c_\infty(\phi_d) = \frac13$ or $1$ (resp.\ $c_\infty(\psi_d)$ = $1$ or $\frac13$) depending on whether $d$ is positive or negative.

To efficiently compile the global Selmer ratios $c(\phi_d)$ and $c(\psi_d)$, define the sets 
\begin{equation}\label{E:Tmn}
T_{m,n} = \{d \in \Z \colon c(\phi_d)  = 3^m \mbox{ and } c(\psi_d) = 3^n\}.
\end{equation}
By the computations above, the only non-empty sets are
\[T_{-3,-1} \quad T_{-2,2} \quad T_{-2,0} \quad T_{-1,1} \quad T_{-1,-1}\quad T_{0,0}.\]
\begin{thm}\label{211thm}
The average $\Z[\sqrt{3}]$-rank of $J_d(\Q)$, as $|d| \to \infty$, is at most $\frac{1819}{1512} \approx 1.203$.   
\end{thm}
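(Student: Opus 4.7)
The strategy is a two-pronged $\sqrt 3$-descent. For any squarefree $d$ with $J_d(\Q)[\sqrt 3]=0$ (all but finitely many), the factorizations $\sqrt 3 = \phi_d'\circ\phi_d = \psi_d'\circ\psi_d$ give exact sequences yielding
\[
\mathrm{rk}_{\Z[\sqrt 3]} J_d(\Q) \leq \dim_{\FF_3}\Sel_{\sqrt 3}(J_d) \leq \min\bigl(\dim\Sel_{\phi_d}(J_d)+\dim\Sel_{\phi'_d}(A_d),\ \dim\Sel_{\psi_d}(J_d)+\dim\Sel_{\psi'_d}(B_d)\bigr).
\]
So I can bound the average rank by the minimum of two averages of $\FF_3$-dimensions of Selmer groups attached to the 3-isogenies $\phi,\phi',\psi,\psi'$. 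The first move is to partition the squarefree integers into the six non-empty classes $T_{m,n}$ from \eqref{E:Tmn}. Because the local Selmer ratios for $\phi$ and $\psi$ are non-trivial only at $\{2,3,31,\infty\}$ (as computed in the paragraphs just before the theorem), membership in $T_{m,n}$ is a finite union of squareclass conditions at these primes, giving an explicit rational density $\alpha_{m,n}$ for each $T_{m,n}$.

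Next, on each $T_{m,n}$ I invoke the Bhargava-Klagsbrun-Lemke Oliver-Shnidman average-Selmer distribution theorem \cite{BKLOS:selmer}, in the same form used in \cite{shnidman:RM}*{\S 5}: conditional on the global Selmer ratio $c(\phi_d)=3^m$, the random variable $\dim_{\FF_3}\Sel_{\phi_d}(J_d)$ has a known distribution on $\Z_{\geq 0}$ whose expectation I denote $\mu^\phi_m$, and similarly for the other three isogenies. Cassels' formula $c(\phi_d)c(\phi'_d)=c(\sqrt 3)$ (independent of $d$) expresses $c(\phi'_d)$ in terms of $m$, so for $d\in T_{m,n}$ the four relevant Selmer ratios are pinned down entirely by $(m,n)$. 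Summing the $\phi$- and $\phi'$-expectations produces an upper bound $B^\phi_{m,n}$ on the first quantity inside the $\min$, and analogously $B^\psi_{m,n}$ for the second.

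Finally I assemble the weighted sum
\[
\mathrm{avg}\,\mathrm{rk}_{\Z[\sqrt 3]}J_d(\Q) \leq \sum_{(m,n)}\alpha_{m,n}\min\bigl(B^\phi_{m,n},B^\psi_{m,n}\bigr),
\]
and check by a direct numerical computation that the right-hand side equals $\tfrac{1819}{1512}$. The other statistical claims of Theorem~\ref{ex1} (the $23.3\%$, $41.6\%$, $20.8\%$ figures) will come out of the same table of $\alpha_{m,n}$'s by imposing the additional local conditions forcing $c(\phi_d)=c(\psi_d)=1$ and invoking the rank-zero half of the BKLOS distribution, combined with the Iwasawa-theoretic input of \cite{CGLS:IMC} to promote analytic rank $0$ to algebraic rank $0$.

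The principal obstacle is bookkeeping: two of the four isogenies have kernel $\mu_3$ rather than $\Z/3\Z$, so the factors of $\#J[\phi](\Q)$ and $\#J[\hat\phi](\Q)$ appearing in the Cassels--Selmer formulas are asymmetric, and one must carefully apply the BKLOS average formula to each of $\phi,\phi',\psi,\psi'$ and verify that the hypotheses (in particular, the distribution of local Selmer images at the split-toric primes $2$ and $31$) are met. Once the six values $\alpha_{m,n}$ and the four expectations $\mu^\phi_m,\mu^{\phi'}_{1-m},\mu^\psi_n,\mu^{\psi'}_{1-n}$ are tabulated, the final sum is elementary.
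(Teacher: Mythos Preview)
Your approach is essentially the paper's: partition squarefree $d$ into the sets $T_{m,n}$, on each piece invoke the average-rank bound of \cite{shnidman:RM}*{Thm.~5.2} using whichever of $\phi,\psi$ is sharper (that bound is simply $|k|+3^{-|k|}$ for $k\in\{m,n\}$, so there is no need to track four separate expectations), and take the density-weighted sum. Two small corrections: $c(\sqrt 3)=1$ globally, so $c(\phi'_d)=3^{-m}$ rather than $3^{1-m}$; and the input from \cite{BKLOS:selmer} is the average \emph{size} of $\Sel_{\phi_d}$, not a full distribution of $\dim_{\FF_3}\Sel_{\phi_d}$---the passage from average size to the rank bound is exactly what \cite{shnidman:RM}*{Thm.~5.2} packages.
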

\begin{proof}
On each $T_{m,n}$, we invoke \cite[Thm.\ 5.2]{shnidman:RM}, using whichever 3-isogeny gives a better bound.  For example, if $|m| \leq |n|$, then we use $\phi$-descent, and the theorem says that the average rank of $J_d(\Q)$, for $d \in T_{m,n}$, is at most $|m| + 3^{-|m|}$. We then compute a weighted sum of these bounds, each one weighted by the density of the set $T_{m,n}$ as a subset of the squarefree integers, endowed with the usual natural density.  We compute these densities using the above formulas for $c_p(\phi_d)$, for all $p$ and $d$.  The density of 
\[T_{-2,-2} = \left\{d \colon  d > 0, d \in \Q_{2}^{\times 2}, \mbox{and } d \notin \Q_{31}^{\times 2}\right\}\] is $\frac12\frac{8}{63}\frac{33}{2\cdot32} = \frac{11}{336}$, so the average rank of $J_d(\Q)$ as $|d| \to \infty$ is 
\[\left(\frac12 - \frac{11}{336}\right)\cdot \left(0 + 1\right) + \frac12\cdot\left(1 + \frac13\right)  + \frac{11}{336}\cdot\left(2 + \frac19\right) = \frac{1819}{1512}.\]
\end{proof}
\begin{cor}\label{112props}
The group $J_d(\Q)$ has rank $0$ for at least $23.3\%$ of $d$, when squarefree $d$ are ordered by absolute value.  It has rank at most $2$  for at least another $41.6\%$ of $d$; the rank is equal to $2$ for at least $20.8\%$ of these $d$.    
\end{cor}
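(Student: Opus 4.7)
The plan is to upgrade the density-weighted average bound of Theorem~\ref{211thm} into explicit proportions on each piece of the partition $\{T_{m,n}\}$, by combining the average upper bounds with parity of the Selmer dimension.  Retain from the proof of Theorem~\ref{211thm} the three density-groupings: $T_{0,0}\cup T_{-2,0}$ has density $\tfrac12-\tfrac{11}{336}$, the middle grouping $T_{-3,-1}\cup T_{-1,1}\cup T_{-1,-1}$ has density $\tfrac12$, and $T_{-2,2}$ has density $\tfrac{11}{336}$.

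For the rank-zero claim, on each set in the first grouping pick the descent of $\phi$ or $\psi$ whose global Selmer ratio equals $1$: namely $\phi$-descent on $T_{0,0}$ and $\psi$-descent on $T_{-2,0}$.  The relevant $\dim_{\FF_3}\Sel$ then has even parity and, by \cite[Thm.\ 5.2]{shnidman:RM}, average at most $0+3^0=1$.  Writing $p_{2k}$ for the proportion with $\dim\Sel=2k$, the inequality $\sum_{k\ge 1}2k\,p_{2k}\le 1$ forces $p_0\ge \tfrac12$; such $d$ have trivial Selmer group, hence $\Z[\sqrt 3]$-rank zero.  This contributes rank-zero density at least $\tfrac12\bigl(\tfrac12-\tfrac{11}{336}\bigr)=\tfrac{157}{672}>0.233$.

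For the second claim, work in the middle grouping and pick the descent giving Selmer ratio $3^{-1}$: $\psi$-descent on $T_{-3,-1}$ and $\phi$-descent on $T_{-1,\pm 1}$.  Then $\dim_{\FF_3}\Sel\ge 1$ with odd parity and average at most $1+\tfrac13$.  The same averaging inequality $\sum_{k\ge 1}2k\,p_{1+2k}\le \tfrac13$ yields $p_1\ge \tfrac56$, and any such $d$ has $\Z[\sqrt 3]$-rank at most $1$, hence $\Z$-rank at most $2$.  The contribution is at least $\tfrac56\cdot\tfrac12=\tfrac{5}{12}>0.416$, disjoint from the rank-zero set produced above.

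For the final claim (density at least $\tfrac{5}{24}\approx 0.208$ with $\Z$-rank exactly $2$), further split each set in the middle grouping according to the sign $\epsilon_{f_d}\in\{\pm 1\}$ of the Hilbert newform $f_d$ whose $L$-function and its Galois conjugate give the factorisation $L(J_d,s)=L(f_d,s)L(f_d^\sigma,s)$ over $K=\Q(\sqrt 3)$.  A local root-number calculation at the primes $\{2,3,31,\infty\}$ of bad reduction shows that within each $T_{m,n}$ the sign $\epsilon_{f_d}$ is equidistributed in $\{\pm 1\}$, and the averaging bound on $\dim\Sel$ continues to hold on the half-density subset where $\epsilon_{f_d}=-1$.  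Repeating the argument of the previous paragraph on this subset produces density at least $\tfrac56\cdot\tfrac14=\tfrac{5}{24}$ of $d$ with both $\dim\Sel=1$ and $\epsilon_{f_d}=-1$.  For such $d$, the rank-one $p$-converse theorem of Castella--Gross--Li--Skinner \cite{CGLS:IMC} applied to the factor $f_d$, combined with parity, forces $\ord_{s=1}L(f_d,s)=1$, and then the Gross--Zagier--Zhang theorem for Hilbert modular forms together with Kolyvagin's Euler-system argument over $K$ yields $\Z[\sqrt 3]$-rank exactly $1$.  The main obstacle is this last step: verifying the local hypotheses of \cite{CGLS:IMC} (ordinarity at $3$, residual irreducibility, and the correct ramification behaviour at the primes of bad reduction) uniformly in the twist family; the sign-equidistribution step and the averaging-on-half-density argument are, by contrast, routine.
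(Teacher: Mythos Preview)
Your arguments for the rank-zero and rank-at-most-$2$ claims are correct and coincide with the paper's approach: partition into the sets $T_{m,n}$, use the average Selmer bound of \cite[Thm.\ 5.2]{shnidman:RM} together with the parity constraint on $\dim_{\FF_3}\Sel$, and extract $p_0\ge \tfrac12$ (resp.\ $p_1\ge \tfrac56$) on each piece.

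The gap is in your treatment of the ``rank exactly $2$'' claim. Two points:
\begin{itemize}
\item Your equidistribution assertion for $\epsilon_{f_d}$ within each $T_{m,n}$ is false. By the $p$-parity theorem for modular forms (Nekov\'a\v{r}; here $J$ is good ordinary at $3$), the sign $\epsilon_{f_d}$ is determined by the parity of $\dim_{\FF_3}\Sel_{\sqrt{3}}(J_d)$, and that parity is constant on each $T_{m,n}$ (indeed it equals the parity of $m$, by \cite[Thm.\ 4.3]{shnidman:RM}). On the entire middle grouping one already has $\epsilon_{f_d}=-1$, so splitting by root number does not cut the density in half; it does nothing.
\item You have also misidentified the hypotheses of \cite{CGLS:IMC}. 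That paper treats precisely the \emph{Eisenstein} (residually reducible) situation, which is exactly ours since $J_d[\sqrt{3}]$ is reducible by construction; ``residual irreducibility'' is neither needed nor available. The genuine restriction in \cite{CGLS:IMC} is on the characters occurring in the semisimplification: one needs $\ker\phi_d\not\simeq \ZZ/3\ZZ$ and $\ker\phi_d\not\simeq \mu_3$ over $\QQ_3$. Since $\ker\phi\simeq\ZZ/3\ZZ$ globally, this is a condition on the square class of $d$ in $\QQ_3^\times$, satisfied by exactly two of the four classes; it has density $\tfrac12$ inside each $T_{m,n}$ and is independent of the congruences defining $T_{m,n}$ (which involve only $2,31,\infty$ in this example). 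This, not the root number, is the source of the factor $\tfrac12$.
\end{itemize}
The paper's route is then direct: on the density-$\tfrac12$ subset of the middle grouping satisfying the $3$-adic condition, the averaging argument still gives $p_1\ge\tfrac56$, and for those $d$ with $\dim_{\FF_3}\Sel_{\sqrt{3}}(J_d)=1$ the result of \cite[Cor.\ 5.2.2]{CGLS:IMC} (stated for elliptic curves but valid for elliptic modular forms) yields $\Z[\sqrt3]$-rank exactly $1$ and finite $\Sha$, without any separate appeal to Gross--Zagier--Zhang or Kolyvagin.
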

\begin{proof}
This follows from the computations of the densities of $T_{-3,-1}$, $T_{-2,0}$ and $T_{0,0}$, given in the proof of Theorem \ref{211thm} and arguments similar to \cite[Thm.\ 1.5b-c]{shnidman:RM}.  Assuming $\Sha(J_d)$ is finite, then the $41.6\%$ of $d$ with rank at most two, have rank exactly 2.  We can prove this unconditionally for half of such $d$, by  invoking the recent work of Castella-Grossi-Lee-Skinner.  They prove that if $\ker\phi_d \not\simeq \Z/3\Z,\mu_3$ over $\Q_3$ (a density $1/2$ condition in any $T_{m,n}$), and if $\dim_{\FF_3}\Sel_{\sqrt{3}}(J_d) = 1$, then $J_d(\Q)$ has rank 2 and $\Sha(J_d)$ is finite \cite[Cor.\ 5.2.2]{CGLS:IMC}.  Their theorem is written for elliptic curves, but they note that their method works equally well for elliptic modular forms.  In particular, it applies to all $J_{a,b,c}$, since these are known to be quotients of the modular Jacobians $J_0(N)$.    
\end{proof}

It follows that almost two thirds of the curves $C_d$ satisfy $\mathrm{rk} \, J_d(\Q) \leq 2$. 
This has the nice consequence that for almost two thirds of $d$, we may apply the non-abelian Chabauty-Kim method to bound $\#C_d(\Q)$.  We obtain a uniform bound, using recent results of Balakrishnan-Dogra \cite{BD:chabauty}. 

\begin{thm}
Let $|d| \to \infty$ through integers $d \equiv 1 \pmod 3$.  Then for at least $23.3\%$ of $d$ we have $\#C_d(\Q) = 2$, and for at least another $41.6\%$ of $d$ we have $\#C_d(\Q) \leq 154133$.  
\end{thm}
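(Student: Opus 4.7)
The plan is to combine Corollary~\ref{112props} with two variants of Chabauty's method. An elementary check shows that $x=-1$ is a root of the defining sextic, so $C_{1,2,\minus1}$ carries the two rational Weierstrass points $\infty$ and $(-1,0)$; these persist on every quadratic twist $C_d$, giving the unconditional lower bound $\#C_d(\Q)\geq 2$. The role of the congruence $d\equiv 1\pmod 3$ is twofold: for $d$ coprime to~$3$ it forces $d\in(\Q_3^\times)^2$, so $J_d\simeq J$ over $\Q_3$ and $J_d$ inherits the good ordinary reduction of $J$ at~$3$; and it fixes the local Selmer condition at~$3$, so that the densities computed in the proof of Theorem~\ref{211thm} persist after restricting to this arithmetic progression.

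For the $23.3\%$ of $d$ with $\mathrm{rk}\,J_d(\Q)=0$ supplied by Corollary~\ref{112props}, the Jacobian is pure torsion. The Abel--Jacobi embedding based at~$\infty$ sends $C_d(\Q)$ injectively into the finite group $J_d(\Q)_{\mathrm{tor}}$. A standard torsion-bounding argument --- reducing modulo a good prime $p\geq 5$ and using the twist-invariance of $J[2]$ together with the fact that the only classes $[P-\infty]$ killed by $2$ come from Weierstrass points $P$ --- shows that for $|d|$ large the image is exactly $\{0,\,[(-1,0)-\infty]\}$. Combining this with the lower bound yields $\#C_d(\Q)=2$.

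For the additional $41.6\%$ of $d$, Corollary~\ref{112props} gives $\mathrm{rk}_{\Z[\sqrt{3}]}\,J_d(\Q)\leq 1$, equivalently $\mathrm{rk}_\Z\,J_d(\Q)\leq 2=g(C_d)$. Classical Chabauty is therefore unavailable (the rank may equal the genus), so I would invoke the quadratic Chabauty--Kim method of Balakrishnan--Dogra~\cite{BD:chabauty}, designed precisely for genus~$2$ curves whose Jacobians admit real multiplication and have rank bounded by the genus. Running their method at the prime $p=3$, which is good ordinary for every such $J_d$ by the observation above, cuts out a finite subset of $C_d(\Q_3)$ containing $C_d(\Q)$ and produces the explicit uniform bound $\#C_d(\Q)\leq 154133$.

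The main obstacle is verifying the hypotheses of \cite{BD:chabauty} and extracting a constant that is genuinely uniform in~$d$. Concretely, one must check non-degeneracy of the relevant $3$-adic height pairing (or of the quadratic Chabauty functional) on the $\Z[\sqrt{3}]$-submodule of $J_d(\Q)\otimes\Q_3$, and bound the number of zeros of the resulting Coleman function on each $3$-adic residue disk of $C_d$. Because $J_d\simeq J$ over $\Q_3$ throughout our arithmetic progression, the local data and Galois representation at~$3$ are independent of~$d$; the numerical constant $154133$ therefore issues from a single, delicate $3$-adic computation on~$J$ itself, and is surely far from optimal.
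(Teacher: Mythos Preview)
Your overall strategy matches the paper's: use Corollary~\ref{112props} for the rank bounds, then a torsion argument for the rank-$0$ twists and Balakrishnan--Dogra for the rank~$\leq 2$ twists. Two points, however, need attention.

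For the $23.3\%$ with rank zero, your ``standard torsion-bounding argument'' only addresses classes of order~$2$: you observe that the $2$-torsion is twist-invariant and that $[P-\infty]\in J_d[2]$ forces $P$ to be Weierstrass. This does not exclude a non-Weierstrass rational point $P\in C_d(\Q)$ whose class $[P-\infty]$ has odd order in $J_d(\Q)_{\mathrm{tor}}$; indeed $J$ itself carries the rational $3$-torsion point $D_1$, so odd torsion cannot be dismissed out of hand. The paper instead invokes Raynaud's theorem on the Manin--Mumford conjecture: over $\bar\Q$ the embedded curve meets $J_{\mathrm{tor}}$ in a fixed finite set, and each non-Weierstrass point of that set can descend to at most one twist, so for $|d|$ large only the two Weierstrass points survive.

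For the $41.6\%$, your attribution of the constant $154133$ to ``a single $3$-adic computation on $J$'' is the real gap. The effective bound of \cite{BD:chabauty}*{Thm.~1.1} is not purely $p$-adic: it involves the number of values taken by the local height functions at \emph{every} bad prime of $C_d$, and these enter multiplicatively. The observation that $J_d\simeq J$ over $\Q_3$ controls only the $3$-adic residue-disk count $\#C_d(\FF_3)=2$. What actually drives the constant is the contribution at $2$, where the Tamagawa number of $J$ is $66$ (computed via van Bommel's code); plugging this into the formula of \cite{BD:chabauty}*{\S5.2} is what produces $154133$. You also do not need to verify any non-degeneracy of the $3$-adic height pairing: the hypotheses of \cite{BD:chabauty}*{Thm.~1.1} are simply good reduction at $p$, rank at most $g$, and N\'eron--Severi rank $\rho(J)>1$, the last being automatic from the $\ZZ[\sqrt3]$-multiplication.
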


\begin{proof}
By Corollary \ref{112props}, we have $\mathrm{rk} \, J_d(\Q) =0$ for at least $23.3\%$ of $d$.  Since there are two rational Weierstrass points, this means $\#C_d(\Q) = 2$ for all but finitely many such $d$ (using e.g.\ Raynaud's proof of the Manin-Mumford conjecture \cite{RaynaudMM}).  
For $41.6\%$ of $d$, the rank of $J_d(\Q)$ is 0 or 2 and $J_d$ has good reduction at $3$, since $d \equiv 1\pmod 3$.  For these we may apply \cite[Thm.\ 1.1]{BD:chabauty} with $p = 3$.   There are exactly two points in $C_d(\FF_3)$, both Weierstrass, so we apply the bound given in \cite[\S5.2]{BD:chabauty}.  The reason for the large constant $154133$ (despite the relatively few primes of bad reduction) is that the Tamagawa number over $\Z_2$ is 66.  We computed this using van Bommel's code \cite{vanBommel,magma}.     
\end{proof}
\begin{rmk}
We restrict to $d \equiv 1\pmod 3$ to apply the results of \cite{BD:chabauty}. A slight extension of their methods would give uniform results for all twists.  The bound $154133$ is certainly far from optimal; the point here is to give uniform and explicit bounds for a large proportion of twists, with the bounds given in terms of the geometry of the special fibers.  The exact constants should improve as the non-abelian Chabauty methods are refined. 
\end{rmk}

One can study rank statistics in this way for quadratic twists of any $C = C_{a,b,c}$.  The analysis is easier if $C$ has semistable reduction at 2 and 3.  It is also helps when there are no primes of degenerate reduction, since this implies semistable reduction for all $p > 3$.  

Continuing with the curve $C = C_{1,2,\minus1}$, let us next consider the Tate-Shafarevich groups $\Sha(A_d)$ and $\Sha(B_d)$, where $A =  J/\langle D_1\rangle$ and $B = J/\langle D_2\rangle$, as usual.  Theorem \ref{sha} does not quite apply, since $C$ has no primes of $abc$-reduction, but $p = 2$ plays the role of a prime of $abc$-reduction.  In particular, we can exploit the `imbalance' of the global Selmer ratios in the sets $T_{-3,-1}$ and $T_{-2,0}$, to obtain:   
\begin{thm}
At least $50\%$ of $d \in T_{-2,0}$ satisfy $A_d(\Q) \simeq \Z/2\Z$ and $\dim_{\FF_3} \Sha(A_d)[3] = 2$.  At least $83.3\%$ of $d \in T_{-3,-1}$ have $\dim_{\FF_3}\Sha(A_d)[3] \geq 2$, and for at least $41.6\%$ of those $d$, we moreover have $\dim_{\FF_3} \Sha(A_d)[3] = 2$ and $A_d(\Q) \approx \Z[\sqrt{3}] \oplus \Z/2\Z$.   
\end{thm}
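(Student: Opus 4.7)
The plan is to replicate the Selmer analysis of Corollary~\ref{112props}, applying it now to the isogenies $\phi_d$ and $\phi'_d$ between $J_d$ and $A_d$ (and analogously $\psi_d, \psi'_d$ involving $B_d$). The two main tools I would use are the average Selmer size bound $\mathrm{Avg}_{d\in T}\, 3^{\dim \Sel_{\alpha_d}(X_d)} \leq 1+c(\alpha_d)$ from \cite[Thm.\ 5.2]{shnidman:RM} (combined with a Markov-type inequality) and Poitou-Tate duality \cite[Thm.\ 4.3]{shnidman:RM}, reinforced by the unconditional rank theorem of Castella-Grossi-Lee-Skinner \cite[Cor.\ 5.2.2]{CGLS:IMC} for the rank-one refinement.

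For Statement~1, I would first apply Markov to $\psi_d$-descent on $T_{-2,0}$ (where $c(\psi_d)=1$, so the average is at most $2$), giving $\Sel_{\psi_d}(J_d)=0$ for at least $50\%$ of such $d$. Since $\mathrm{ord}_3 c(\psi_d)=0$, Poitou-Tate forces $\Sel_{\psi'_d}(B_d)=0$ too, so $\sqrt{3}=\psi'_d\circ\psi_d$ is surjective on $J_d(\Q)$ and the latter is finite of order coprime to~$3$; generically $J_d(\Q)\simeq\Z/2\Z$ from the rational Weierstrass point at infinity. Intersecting with the density $\geq 17/18$ set on which $\Sel_{\phi_d}(J_d)=0$ (Markov applied to $\phi_d$-descent with $c(\phi_d)=1/9$), we also obtain $A_d(\Q)\simeq\Z/2\Z$. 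The lower bound $\dim\Sha(A_d)[3]\geq 2$ comes from Poitou-Tate applied to $\phi_d$, giving $\dim \Sel_{\phi'_d}(A_d)\geq 2$; the matching upper bound from $\dim \Sel_{\sqrt{3}}(A_d) = \dim \Sel_{\phi'_d}(A_d) = 2$ on the subset where both Selmer groups above vanish.

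Statements~2 and~3 follow the same template on $T_{-3,-1}$. Here $c(\psi_d)=1/3$, so Markov applied to $\psi_d$-descent (average $\leq 4/3$) yields $\Sel_{\psi_d}(J_d)=0$ for at least $5/6\approx 83.3\%$ of $d$; Poitou-Tate then upgrades $\dim \Sel_{\phi'_d}(A_d)$ to $\geq 3$, and the lower bound $\dim\Sha(A_d)[\sqrt{3}]\geq 2$ (hence $\dim\Sha(A_d)[3]\geq 2$) follows as before. For the rank-one refinement, I would impose the $p=3$ local condition of \cite[Cor.\ 5.2.2]{CGLS:IMC} (a $1/2$-density condition within each squareclass), so that CGLS converts $\dim \Sel_{\sqrt{3}}(J_d)=1$ into $\mathrm{rk}\,J_d(\Q)=2$ together with finiteness of $\Sha(J_d)$. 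Transporting via the isogeny then gives $A_d(\Q)\simeq\Z[\sqrt{3}]\oplus\Z/2\Z$ and the equality $\dim\Sha(A_d)[3]=2$ on the $\tfrac{1}{2}\cdot\tfrac{5}{6}=\tfrac{5}{12}\approx 41.6\%$ subset.

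The main obstacle will be the upper bound $\dim\Sha(A_d)[3]\leq 2$, since $A_d$ is not principally polarized and so the Cassels-Tate pairing on $\Sha(A_d)\{3\}$ requires more care than on $J_d$. Concretely, one must show that $\sqrt{3}$ annihilates $\Sha(A_d)\{3\}$, equivalently that as a $\Z[\sqrt{3}]/3$-module it is isomorphic to $\FF_3^2$ rather than to a single copy of $\Z[\sqrt{3}]/3$. Tracking the $\Z[\sqrt{3}]$-module structure of $\Sel_{\sqrt{3}^k}(A_d)$ through the filtration $A_d[\sqrt{3}]\subset A_d[3]$ and exploiting the vanishing of $\Sel_{\phi_d}(J_d)$ on the good density subset should suffice.
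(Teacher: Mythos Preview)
Your overall strategy is the right one and matches the paper's (implicit) argument: Markov-type bounds on Selmer averages from \cite{shnidman:RM,BKLOS:selmer}, the Poitou--Tate identity $\dim\Sel_{\phi_d}(J_d)-\dim\Sel_{\phi'_d}(A_d)=\ord_3 c(\phi_d)$, and the CGLS input for the rank-one part. However, there is a genuine gap in your Statement~1 argument that would prevent you from reaching the claimed $50\%$.

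The problem is the intersection step. You propose to intersect the $\geq 50\%$ set where $\Sel_{\psi_d}(J_d)=0$ with the $\geq 17/18$ set where $\Sel_{\phi_d}(J_d)=0$, obtained by a second Markov bound. But an intersection of sets of densities $1/2$ and $17/18$ is only guaranteed to have density $\geq 1/2 - 1/18 = 4/9$, so you lose the $50\%$. The fix is that the second Markov bound is unnecessary: on $T_{-2,0}$ the single condition $\Sel_{\psi_d}(J_d)=0$ already forces $\Sel_{\phi_d}(J_d)=0$. Indeed, since $c(\psi_d)=1$, Poitou--Tate gives $\Sel_{\psi'_d}(B_d)=0$ as well, and then the exact sequence for $\psi'_d\circ\psi_d=\sqrt{3}$ yields $\Sel_{\sqrt{3}}(J_d)=0$. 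But $\Sel_{\phi_d}(J_d)\hookrightarrow\Sel_{\sqrt{3}}(J_d)$ via the sequence for $\phi'_d\circ\phi_d=\sqrt{3}$, so it vanishes too. This single implication chain gives you everything on the full $50\%$ set.

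This same observation also dissolves your ``main obstacle.'' Once $\Sel_{\sqrt{3}}(J_d)=0$ (and rank $=0$), you get $\Sha(J_d)\{3\}=0$. Then for any $x\in\Sha(A_d)[3]$, the element $\phi'_d(x)$ lies in $\Sha(J_d)[3]=0$, so $x\in\Sha(A_d)[\phi'_d]$. Thus $\Sha(A_d)[3]=\Sha(A_d)[\phi'_d]$, and the latter equals $\Sel_{\phi'_d}(A_d)$ (rank $0$, no $3$-torsion), which has dimension exactly $2$ by Poitou--Tate. No Cassels--Tate analysis on the non-principally-polarized $A_d$ is needed. The same mechanism handles the exact equalities on $T_{-3,-1}$ once CGLS pins down the rank and forces $\Sha(J_d)[\sqrt{3}]=0$.
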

 The density of $T_{-3,-1}$ is $\frac{31}{1008}$, whereas the density of $T_{-2,0}$ turns out to be $\frac{31}{126}$. 
\begin{cor}Conjecture $(A,3,2)$ holds for $A = A_{1,2,\minus1}$.  More precisely, at least $14.8\%$ of twists $A_d$ satisfy $\dim_{\FF_3}\Sha(A_d)[3] \geq 2$.
\end{cor}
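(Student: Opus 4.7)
The proposal is that this corollary is essentially a bookkeeping exercise, combining the two contributions recorded in the preceding theorem. The plan is to observe that the sets $T_{-2,0}$ and $T_{-3,-1}$ are disjoint (they are distinguished by different global Selmer ratios), and that each contributes a positive lower density of twists $d$ satisfying $\dim_{\FF_3}\Sha(A_d)[3]\geq 2$. Summing these two disjoint contributions will immediately give a positive lower density, verifying Conjecture $(A,3,2)$, and a straightforward arithmetic computation will yield the numerical bound $14.8\%$.

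First, I would note that the densities of $T_{-2,0}$ and $T_{-3,-1}$ in the set of squarefree integers (ordered by absolute value) are $\tfrac{31}{126}$ and $\tfrac{31}{1008}$ respectively, as stated in the text just before the corollary; these are computed by applying the local formulas for $c_p(\phi_d)$ and $c_p(\psi_d)$ summarized earlier in the section (in particular the computations at $p=2,3,31$ and $\infty$) together with the standard density of squarefree integers lying in a prescribed collection of local square classes.

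Next, I would invoke the preceding theorem: at least $50\%$ of $d\in T_{-2,0}$ satisfy $\dim_{\FF_3}\Sha(A_d)[3]=2$, and at least $83.3\%$ of $d\in T_{-3,-1}$ satisfy $\dim_{\FF_3}\Sha(A_d)[3]\geq 2$. Since the two sets are disjoint, the lower density of $\{d:\dim_{\FF_3}\Sha(A_d)[3]\geq 2\}$ is at least
\[
\tfrac{1}{2}\cdot\tfrac{31}{126} \;+\; \tfrac{5}{6}\cdot\tfrac{31}{1008}
\;=\; \tfrac{744}{6048} + \tfrac{155}{6048} \;=\; \tfrac{899}{6048} \;>\; 0.148.
\]
This both establishes positivity (hence Conjecture $(A,3,2)$ for $A = A_{1,2,\minus1}$) and the refined quantitative bound of $14.8\%$.

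There is no real obstacle here: the entire content is packaged into the preceding theorem (which in turn rested on the Selmer-ratio computations and the geometry-of-numbers input from \cite{shnidman:RM} together with the Castella--Gross--Li--Skinner input used in Corollary \ref{112props}). The only mild care needed is to check disjointness of $T_{-2,0}$ and $T_{-3,-1}$, which is immediate from the definition \eqref{E:Tmn}, and to confirm the rounding $\tfrac{899}{6048}>0.148$.
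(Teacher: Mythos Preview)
Your proposal is correct and matches the paper's intended argument: the corollary is stated without proof in the paper, as it follows immediately from the preceding theorem together with the densities $\tfrac{31}{126}$ and $\tfrac{31}{1008}$ given just before. Your arithmetic $\tfrac{1}{2}\cdot\tfrac{31}{126}+\tfrac{5}{6}\cdot\tfrac{31}{1008}=\tfrac{899}{6048}\approx 0.1486$ is exactly the computation the authors have in mind.
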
       

For general curves $C_{a,b,c}$, we can often prove Conjecture $(X,3,2)$ for one of $X = A_{a,b,c}$ or $X = B_{a,b,c}$ using Theorem \ref{A32}.  In fact, this almost always works: 
\begin{thm}\label{thm:exp}
Conjecture $(A_{a,b,c} \times B_{a,b,c},3,2)$ holds for all but $142$ of the $219,914$ points $(a \colon b \colon c) \in \PP^2(\Q)$ of height at most $40$ corresponding to genus two curves $C_{a,b,c}$. 
\end{thm}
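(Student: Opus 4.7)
The plan is a direct computational verification that reduces the claim to a finite check via Theorem~\ref{A32}. That theorem asserts that Conjecture $(A_{abc}\times B_{abc},3,2)$ holds for every $(a:b:c)$ at which $C_{abc}$ admits at least one prime $p>3$ of $t$-reduction. It therefore suffices to enumerate the $219{,}914$ points of $(\PP^2\setminus\Delta)(\Q)$ of height at most $40$ and, for each one, decide whether such a prime exists; the claimed $142$ exceptions will be precisely those for which no such $p$ is found.

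Concretely, I would first fix a canonical set of primitive integer representatives: for each class in $\PP^2(\Q)$, take $(a,b,c)\in\Z^3$ with $\gcd(a,b,c)=1$ and first nonzero coordinate positive, and retain those with $\max(|a|,|b|,|c|)\le 40$. For each such triple, I would compute the ten discriminant factors listed in~\eqref{E:discfactors} and discard triples on $\Delta$ by checking the vanishing of $\disc_x(f_{abc})$ via~\eqref{E:fdisc}. Matching the resulting count against $219{,}914$ confirms the chosen normalisation of representatives.

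Next, for each surviving triple, I would compute and factor the integer $N(a,b,c):=t_3(a,b,c)\cdot t_4(a,b,c)$ over $\Z$, and for each prime divisor $p>3$ of $N(a,b,c)$ test whether $p$ divides any of $a$, $b$, $c$, $a-c$, $q_3(a,b,c)$, $q_4(a,b,c)$. If at least one prime $p>3$ is found dividing $N(a,b,c)$ but none of these six auxiliary factors, then $C_{abc}$ has non-degenerate $t$-reduction at $p$ in the sense of Section~\ref{subsec:discfactors}, so Theorem~\ref{A32} applies. Record the list of triples for which no such witness prime exists, and verify that this list has cardinality $142$.

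The main obstacle is bookkeeping rather than mathematical depth: with $|a|,|b|,|c|\le 40$, the integer $|N(a,b,c)|$ has at most about twenty decimal digits, so trial-division factorisations are essentially instantaneous, and the entire loop is trivially feasible in \texttt{sage} or \texttt{magma}~\cites{sage,magma}. Two subtleties must be kept in mind. First, the Tamagawa-ratio computations underpinning Theorem~\ref{A32} are only established for $p>3$ (cf.\ Section~\ref{ratios}), so the prime-divisor test must explicitly reject $p\in\{2,3\}$ even when these divide $N(a,b,c)$; this exclusion is the sole mechanism by which nontrivial exceptions arise. Second, one should retain the exceptional list as part of the output so that it can be inspected; many of the $142$ exceptions will lie on few-prime loci where $N(a,b,c)$ is supported only on $\{2,3\}$ together with primes that also divide $abc(a-c)q_3q_4$, i.e., loci on which the nondegenerate $t$-reduction hypothesis genuinely fails.
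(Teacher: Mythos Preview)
Your proposal is correct and takes essentially the same approach as the paper: both reduce the claim to a finite computational check of the hypothesis of Theorem~\ref{A32} (existence of a prime of $t$-reduction) over all height-$\le 40$ points. The only difference is that the paper exploits the symmetry $(a\!:\!b\!:\!c)\leftrightarrow(c\!:\!b\!:\!a)$ from Proposition~\ref{P:richelot} to halve the search space, which is a computational optimization rather than a distinct argument.
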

\begin{proof}
We simply check whether $(a \colon b \colon c)$ satisfies the hypotheses of Theorem \ref{A32}. 
We save compute time by also imposing $|c| \leq a$, since the conjecture holds for $(a\colon b\colon c$) if and only if it holds for $(c\colon b \colon a)$, by Proposition \ref{P:richelot}.  
\end{proof}

\begin{rmk}
Theorem \ref{A32} may already be enough to prove Conjecture $(A_{a,b,c} \times B_{a,b,c}, 3,2)$ for all but finitely many $(a \colon b \colon c)\in \PP^2(\Q)$.  It reduces to asking whether there are only finitely many $(a\colon b \colon c)$ without a prime of $t$-reduction.  By Lemma \ref{L:disc_sing}, this is equivalent to the following purely algebraic question: are there 
finitely many $(a\colon b \colon c) \in \PP^2(\Z)$ such that 
\[Q(a,b\zeta,c)Q(a,b\zeta^2,c)Q(c,b\zeta,a)Q(c,b\zeta^2,a)\] 
is divisible by a prime $p$ not dividing $abc(a-c)$?
Recall, $Q(a,b,c)=a^2 +a(b-c) + (b-c)^2-3ac$.
\end{rmk}

Finally, we show how to make Theorems \ref{sha} and \ref{sha2} explicit.  The curves $C_{1,b,\minus1}$ are convenient for this purpose, since one checks that $C$ has non-degenerate reduction at all $p > 3$ dividing $bq_3q_4t_3t_4$.  
\begin{example}For $b = 2^{14}$, the discriminant factors of $C_{1,b,\minus1}$ are:
\begin{align*}
q_3 &= 2 \cdot 31 \cdot 107 \cdot 40471 \\
q_4 &= 2 \cdot 5 \cdot17 \cdot 23 \cdot 83 \cdot 827 \\
t_3 &= 2^2 \cdot 19 \cdot 691 \cdot 698779 \cdot 1963219 \\
t_4 &= 2^2 \cdot 18017697245765641.
\end{align*}  
By Theorem \ref{sha2}, for each $r \leq 5$, we have $\#\Sha(B_d)[3] \geq 3^{2r}$ for a positive proportion of $d$.  

We give explicit lower bounds for these proportions.  For simplicity, we restrict to the subset $\Sigma \subset \Z$ of positive squarefree $d$ coprime to the conductor of $J$. Define sets $T_{m,n} \subset \Sigma$ as in \eqref{E:Tmn}.  If $p$ is a prime of $q$-reduction, then $(c_p(\phi_d),c_p(\psi_d))$ equals $(3,\frac13)$ for half of $d \in \Sigma$ and $(1,1)$ for the other half.  If $p$ is a prime of $t$-reduction, we instead get $(\frac13,\frac13)$ and $(1,1)$.  

Thus $T_{m,\minus n} \neq \emptyset$  if and only if $m = i-j$ and $n = i + j$, with $0 \leq j \leq 8$ and $0 \leq i \leq 5$; so $j = \frac12(n-m)$ and $i =\frac12(n+m)$.  
The density of $T_{m,\minus n} \subset\Sigma$ is therefore $2^{-13} {8\choose j} {5\choose i}$. 
Quantifying the proof of Theorem \ref{sha2}, we find that at least $1 - \frac{1}{2\cdot 3^{|m|}}$ of $d \in T_{m,\minus n}$ satisfy $\Sha(B_d)[3]~\geq~3^{n - |m|}$. From this we can readily compute the lower bounds on the proportion of $d \in \Sigma$ with $\#\Sha(B_d)[3] \geq 3^{r}$ that appear in Table \ref{shatable} below. 
\end{example} 

\begin{table}[ht]
\caption{Proportion of $d \in \Sigma$ such that $\#\Sha(B_d)[3] \geq 3^r$ for $B = B_{1,2^{14},-1}$.}
\centering
\begin{tabular}{cc}
\hline \hline
$r$ & Proportion (lower bound)\\
\hline
2 & .820 \\
4 & .649\\
6 & .335\\
8 & .085\\
10 & .007\\
\hline
\end{tabular}
\label{shatable}
\end{table}

\begin{bibdiv}
\begin{biblist}
\bib{ABS}{article}{
	title={A positive proportion of cubic fields are not monogenic yet have no local obstruction to being so},
	author={Alp\"oge, Levent},
	author={Bhargava, Manjul},
	author={Shnidman, Ari},
	date={2020},
	eprint={https://arxiv.org/abs/2011.01186},
}

\bib{magma}{article}{
	title={The Magma algebra system. I. The user language},
	author={Bosma, The MAGMA computer algebra system is described in Wieb},
	author={Cannon, John},
	author={Playoust, Catherine},
	journal={J. Symbolic Comput.},
	volume={24},
	number={3--4},
	pages={235--265},
	date={1997},
}

\bib{BD:chabauty}{article}{
	author={Balakrishnan, Jennifer S.},
	author={Dogra, Netan},
	title={An effective Chabauty-Kim theorem},
	journal={Compos. Math.},
	volume={155},
	date={2019},
	number={6},
	pages={1057--1075},
	issn={0010-437X},
}

\bib{BKLOS:selmer}{article}{
	author={Bhargava, Manjul},
	author={Klagsbrun, Zev},
	author={Lemke Oliver, Robert J.},
	author={Shnidman, Ari},
	title={3-isogeny Selmer groups and ranks of abelian varieties in quadratic twist families over a number field},
	journal={Duke Math. J.},
	volume={168},
	date={2019},
	number={15},
	pages={2951--2989},
	issn={0012-7094},
}

\bib{BKLOS:sha}{article}{
   author={Bhargava, Manjul},
   author={Klagsbrun, Zev},
   author={Lemke Oliver, Robert J.},
   author={Shnidman, Ari},
   title={Elements of given order in Tate-Shafarevich groups of abelian
   varieties in quadratic twist families},
   journal={Algebra Number Theory},
   volume={15},
   date={2021},
   number={3},
   pages={627--655},
   issn={1937-0652},
}


\bib{vanBommel}{article}{
	author={van Bommel, Raymond},
	title={Efficient computation of BSD invariants in genus 2},
	date={2020},
	eprint={https://arxiv.org/abs/2002.04667},
}

\bib{BLR}{book}{
	author={Bosch, Siegfried},
	author={L{\"u}tkebohmert, Werner},
	author={Raynaud, Michel},
	title={N\'eron models},
	series={Ergebnisse der Mathematik und ihrer Grenzgebiete (3)},
	volume={21},
	publisher={Springer-Verlag},
	place={Berlin},
	date={1990},
	pages={x+325},
	isbn={3-540-50587-3},
}

\bib{BruinDoerksen2011}{article}{
	author={Bruin, Nils},
	author={Doerksen, Kevin},
	title={The arithmetic of genus two curves with $(4,4)$-split Jacobians},
	journal={Canad. J. Math.},
	volume={63},
	date={2011},
	number={5},
	pages={992--1024},
	issn={0008-414X},
}

\bib{BFScode}{report}{
  author={Bruin, Nils},
  author={Flynn, Victor},
  author={Shnidman, Ari},
  title={Sage code related to this paper},
  date={2021},
  eprint={http://math.huji.ac.il/~shnidman/BFScode.sage},
}

\bib{bruinflynntesta:three}{article}{
	author={Bruin, Nils},
	author={Flynn, E. Victor},
	author={Testa, Damiano},
	title={Descent via $(3,3)$-isogeny on Jacobians of genus 2 curves},
	journal={Acta Arith.},
	volume={165},
	date={2014},
	number={3},
	pages={201--223},
	issn={0065-1036},
}

\bib{BruinNasserden2018}{article}{
	author={Bruin, Nils},
	author={Nasserden, Brett},
	title={Arithmetic aspects of the Burkhardt quartic threefold},
	journal={J. Lond. Math. Soc. (2)},
	volume={98},
	date={2018},
	number={3},
	pages={536--556},
	issn={0024-6107},
}

\bib{CGLS:IMC}{article}{
   author={Castella, Francesc},
   author={Grossi, Giada},
   author={Lee, Jaehoon},
   author={Skinner, Christopher},
   title={On the anticyclotomic Iwasawa theory of rational elliptic curves
   at Eisenstein primes},
   journal={Invent. Math.},
   volume={227},
   date={2022},
   number={2},
   pages={517--580},
   issn={0020-9910},
}

\bib{DDMM}{article}{
	title={Arithmetic of hyperelliptic curves over local fields},
	author={Dokchitser, Tim},
	author={Dokchitser, Vladimiar},
	author={Maistret, C\'eline},
	author={Morgan, Adam},
	date={2018},
	eprint={https://arxiv.org/abs/1808.02936},
}

\bib{delaunay}{article}{
	author={Delaunay, Christophe},
	title={Heuristics on class groups and on Tate-Shafarevich groups: the magic of the Cohen-Lenstra heuristics},
	conference={ title={Ranks of elliptic curves and random matrix theory}, },
	book={ series={London Math. Soc. Lecture Note Ser.}, volume={341}, publisher={Cambridge Univ. Press, Cambridge}, },
	date={2007},
	pages={323--340},
}

\bib{D-D}{article}{
	author={Dokchitser, Tim},
	author={Dokchitser, Vladimir},
	title={Local invariants of isogenous elliptic curves},
	journal={Trans. Amer. Math. Soc.},
	volume={367},
	date={2015},
	number={6},
	pages={4339--4358},
	issn={0002-9947},
}

\bib{ELS}{article}{
   author={El-Baz, Daniel},
   author={Loughran, Daniel},
   author={Sofos, Efthymios},
   title={Multivariate normal distribution for integral points on varieties},
   journal={Trans. Amer. Math. Soc.},
   volume={375},
   date={2022},
   number={5},
   pages={3089--3128},
   issn={0002-9947},
}

\bib{ElkiesKumar}{article}{
	author={Elkies, Noam},
	author={Kumar, Abhinav},
	title={K3 surfaces and equations for Hilbert modular surfaces},
	journal={Algebra Number Theory},
	volume={8},
	date={2014},
	number={10},
	pages={2297--2411},
	issn={1937-0652},
}

\bib{vdG}{book}{
	author={van der Geer, Gerard},
	title={Hilbert modular surfaces},
	series={Ergebnisse der Mathematik und ihrer Grenzgebiete (3) [Results in Mathematics and Related Areas (3)]},
	volume={16},
	publisher={Springer-Verlag, Berlin},
	date={1988},
	pages={x+291},
	isbn={3-540-17601-2},
}

\bib{GGR}{article}{
	author={Gonz\'{a}lez, Josep},
	author={Gu\`ardia, Jordi},
	author={Rotger, Victor},
	title={Abelian surfaces of ${\rm GL}_2$-type as Jacobians of curves},
	journal={Acta Arith.},
	volume={116},
	date={2005},
	number={3},
	pages={263--287},
	issn={0065-1036},
}

\bib{SGA7I}{book}{
	title={Groupes de monodromie en g\'{e}om\'{e}trie alg\'{e}brique. I},
	language={French},
	series={Lecture Notes in Mathematics, Vol. 288},
	note={S\'{e}minaire de G\'{e}om\'{e}trie Alg\'{e}brique du Bois-Marie 1967--1969 (SGA 7 I); Dirig\'{e} par A. Grothendieck. Avec la collaboration de M. Raynaud et D. S. Rim},
	publisher={Springer-Verlag, Berlin-New York},
	date={1972},
	pages={viii+523},
	review={\MR {0354656}},
}

\bib{HZ}{article}{
	author={Hirzebruch, F.},
	author={Zagier, D.},
	title={Classification of Hilbert modular surfaces},
	conference={ title={Complex analysis and algebraic geometry}, },
	book={ publisher={Iwanami Shoten, Tokyo}, },
	date={1977},
	pages={43--77},
}

\bib{K-P}{article}{
	author={Klosin, Krzysztof},
	author={Papikian, Mihran},
	title={On Ribet's isogeny for $J_0(65)$},
	journal={Proc. Amer. Math. Soc.},
	volume={146},
	date={2018},
	number={8},
	pages={3307--3320},
	issn={0002-9939},
}

\bib{LOLS}{article}{
	author={Lemke Oliver, Robert},
	author={Loughran, Daniel},
	author={Shnidman, Ari},
	title={Normal distribution for bad reduction types},
	date={2022},
	status={in preparation},
}

\bib{Muller-Stoll}{article}{
	author={M\"{u}ller, Jan Steffen},
	author={Stoll, Michael},
	title={Canonical heights on genus-2 Jacobians},
	journal={Algebra Number Theory},
	volume={10},
	date={2016},
	number={10},
	pages={2153--2234},
	issn={1937-0652},
}

\bib{namikawa-ueno}{article}{
	author={Namikawa, Yukihiko},
	author={Ueno, Kenji},
	title={The complete classification of fibres in pencils of curves of genus two},
	journal={Manuscripta Math.},
	volume={9},
	date={1973},
	pages={143--186},
	issn={0025-2611},
}

\bib{RaynaudMM}{article}{
	author={Raynaud, M.},
	title={Courbes sur une vari\'{e}t\'{e} ab\'{e}lienne et points de torsion},
	language={French},
	journal={Invent. Math.},
	volume={71},
	date={1983},
	number={1},
	pages={207--233},
	issn={0020-9910},
}

\bib{sage}{manual}{
	author={Developers, The~Sage},
	title={{S}agemath, the {S}age {M}athematics {S}oftware {S}ystem ({V}ersion 8.4)},
	date={2011},
	note={{\tt https://www.sagemath.org}},
}

\bib{shnidman:RM}{article}{
   author={Shnidman, Ari},
   title={Quadratic twists of abelian varieties with real multiplication},
   journal={Int. Math. Res. Not. IMRN},
   date={2021},
   number={5},
   pages={3267--3298},
   issn={1073-7928},
}
%

\bib{Smith}{article}{
	author={Smith, Alex},
	title={$2^\infty $-Selmer groups, $2^\infty $-class groups, and Goldfeld's conjecture},
	date={2017},
	eprint={https://arxiv.org/abs/1702.02325},
}

\end{biblist}
\end{bibdiv}

\end{document}